\numberwithin{equation}{section}
\newcommand{\aall}{\mbox{ a.a.} \;}
\DeclareMathOperator*{\esssup}{ess\,sup}
\newcommand {\half} {\mbox{$\frac{1}{2}$}}  
\newcommand {\real} {\mathbb{R}}
\newcommand{\bff}{\ensuremath{\mathbf{f}}} %
\newcommand{\bg}{\ensuremath{\mathbf{g}}} %
\newcommand{\bh}{\ensuremath{\mathbf{h}}} %
\newcommand{\bm}{\ensuremath{\mathbf{m}}} %
\newcommand{\bn}{\ensuremath{\mathbf{n}}} %
\newcommand{\bp}{\ensuremath{\mathbf{p}}} %
\newcommand{\br}{\ensuremath{\mathbf{r}}} %
\newcommand{\bu}{\ensuremath{\mathbf{u}}} %
\newcommand{\bv}{\ensuremath{\mathbf{v}}} %
\newcommand{\bx}{\ensuremath{\mathbf{x}}} %
\newcommand{\balpha}{\ensuremath{\mbox{\boldmath $\alpha$}}}
\newcommand {\bdelta} {\boldsymbol{\delta}}
\newcommand {\bepsilon} {\mbox{\boldmath $\epsilon$}}
\newcommand {\bgamma} {\mbox{\boldmath $\gamma$}}
\newcommand {\blambda} {\mbox{\boldmath $\lambda$}}
\newcommand {\bpsi} {\boldsymbol{\psi}}
\newcommand{\BA}{\ensuremath{\mathbf{A}} } %
\newcommand{\BB}{\ensuremath{\mathbf{B}} } %
\newcommand{\BE}{\ensuremath{\mathbf{E}} } %
\newcommand{\BF}{\ensuremath{\mathbf{F}} } %
\newcommand{\BI}{\ensuremath{\mathbf{I}} } %
\newcommand{\BL}{\ensuremath{\mathbf{L}} } %
\newcommand{\BQ}{\ensuremath{\mathbf{Q}} } %
\newcommand{\BV}{\ensuremath{\mathbf{V}} } %
\newcommand {\cB} {\mathcal{B}}
\newcommand {\cG} {\mathcal{G}}
\newcommand {\cL} {\mathcal{L}}
\newcommand {\cN} {\mathcal{N}}
\newcommand {\cX} {\mathcal{X}}
\newcommand {\cZ} {\mathcal{Z}}
\newcommand{\WW}{\big( W^{1, \infty}(I) \big)^{n_x}}
\newcommand{\LL}{\big(L^\infty(I) \big)^{n_x}}
\newtheorem{assumption}[theorem]{Assumption}
\newtheorem{remark}[theorem]{Remark}
\title{Sensitivity of ODE Solutions and Quantities of Interest with Respect to Component Functions in the Dynamics
        \thanks{To appear in SIAM Journal on Numerical Analysis
       \funding{This research was supported in part by AFOSR Grant FA9550-22-1-0004 and NSF Grant DMS-2231482.}
       }}
\author{Jonathan R. Cangelosi
             \thanks{Department of Computational Applied Mathematics and Operations Research,
                      MS-134, Rice University, 6100 Main Street,
                     Houston, TX 77005-1892. 
                     E-mail: jrc20@rice.edu}
             \and
             Matthias Heinkenschloss
             \thanks{Department of Computational Applied Mathematics and Operations Research,
                 MS-134, Rice University, 6100 Main Street,
                 Houston, TX 77005-1892, and the Ken Kennedy Institute, Rice University.
                 E-mail: heinken@rice.edu}
        }
\begin{document}

\maketitle

\begin{abstract}
This work analyzes the sensitivities of the solution of a system of ordinary differential equations (ODEs) and a corresponding quantity of interest (QoI) to perturbations in a state-dependent component function that appears in the governing ODEs. This extends existing ODE sensitivity results, which consider the sensitivity of the ODE solution with respect to state-independent parameters. It is shown that with Carath\'eodory-type assumptions on the ODEs, the Implicit Function Theorem can be applied to establish continuous Fr\'echet differentiability of the ODE solution with respect to the component function. These sensitivities are used to develop new estimates for the change in the ODE solution or QoI when the component function is perturbed. In applications, this new sensitivity-based bound on the ODE solution or QoI error is often much tighter than classical Gronwall-type error bounds. The sensitivity-based error bounds are applied to a trajectory simulation for a hypersonic vehicle.
\end{abstract}

\begin{keywords}
    Sensitivity analysis, 
    ordinary differential equations,
    perturbation error estimates
\end{keywords}

\begin{MSCcodes}   
 	34D10,
	46G05,
	65L07 	
\end{MSCcodes}

\section{Introduction}   \label{sec:intro}
Many applications are modeled by systems of ordinary differential equations (ODEs)
in which some solution-dependent component functions are not exactly known.
In such cases, it is crucial to determine the sensitivity of the ODE solution or of a
quantity of interest (QoI) with respect to perturbations in the component functions. If bounds for the errors in the component functions are known, this sensitivity information can be used to estimate the error 
in the ODE solution or the QoI.
For example, the trajectory of an aircraft can be  modeled by a system
of ODEs including lift and drag coefficients, which are functions that themselves depend on the trajectory of the
aircraft. Often only values of lift and drag coefficients at some points are available, e.g., from experiments or
computationally expensive CFD simulations,  and approximate  lift and drag coefficients
are obtained from interpolation or regression for numerical solution of the ODEs.
See, e.g., \cite[Sec.~6.2]{JTBetts_2010a} or
\cite{JRCangelosi_MHeinkenschloss_JTNeedels_JJAlonso_2024a}.
In this case, one wants to estimate the error between the solution of the ODE system with the true component function
and the computable solution of the ODE system with the approximate component function.

In this paper, we first establish the Fr\'echet differentiability (in suitable function spaces) 
of the ODE solution with respect to these component functions.
This result is then used to provide a new sensitivity-based estimate for the error between ODE solutions computed with the 
approximate and true component functions
and a corresponding sensitivity-based estimate for the error in a QoI depending on the ODE solution.
These error estimates are crucial to determine whether the given approximate component function
is of sufficient quality. If it is not, then the error estimate could be used to determine in which regions of the solution 
space the approximate component function needs to be improved.
In applications, our new sensitivity-based error estimates can produce superior estimates compared to classical
ODE perturbation estimates, which depend exponentially on the logarithmic Lipschitz constant of the ODE system
and on the length of the time interval considered.

The problem under consideration is given as follows (the detailed function space setting will be specified in 
\cref{sec:problem_formulation}).
Given the interval $I := (t_0, t_f)$, initial data $x_0 \in \real^{n_x}$, and functions
\[
	\bg :  I \times \real^{n_x}  \rightarrow \real^{n_g}, \quad \mbox{ and } \quad
	\bff : I \times  \real^{n_x} \times \real^{n_g} \rightarrow \real^{n_x},
\]
we are interested in the dependence of the solution
$\bx : \overline{I} \rightarrow \real^{n_x}$ of the initial value problem (IVP)
\begin{equation} \label{eq:IVP}
\begin{aligned}
	\bx'(t) &= \bff\Big( t, \bx(t), \bg\big(t, \bx(t) \big) \Big), & \mbox{almost all (a.a.) } t \in I, \\
	\bx(t_0) &= x_0,
\end{aligned}
\end{equation}
on the component function $\bg$. The solution $\bx$ of \eqref{eq:IVP} is also referred to as the state.
The function $\bff$ represents the dynamics of the system, which depend on a component function $\bg$.
We often use $\bx(\cdot \, ; \bg)$ to denote the solution of \eqref{eq:IVP} to emphasize that it is computed
with the component function $\bg$.
We assume that instead of the true function $\bg_*$ one only has an approximation $\widehat{\bg}$ available.
Thus, instead of the desired $\bx(\cdot \, ; \bg_*)$ one can only compute $\bx(\cdot \, ; \widehat{\bg})$.

In \cref{sec:sensitivity} we will specify the function space setting for \eqref{eq:IVP}
and establish continuous Fr\'echet differentiability of $\bg \mapsto \bx(\cdot \, ; \bg)$.
Sensitivity analyses of the solution of an ODE
\begin{equation*}
	\bx'(t) = \bff\big( t, \bx(t), \bp \big), \quad \aall t \in I, \qquad   \bx(t_0) = x_0,
\end{equation*}
with respect to parameters $\bp \in \real^{n_p}$ are standard;
see, e.g., \cite[Sec.~9]{HAmann_1990}, \cite[Sec.~I.14]{EHairer_SONorsett_GWanner_1993a}.
However, in \eqref{eq:IVP} the third argument of $\bff$, in contrast to the parametric setting, features a \emph{state-dependent} model function $\bg$, introducing coupling between the model and the resulting trajectory. We will use the Implicit Function Theorem to establish continuous
Fr\'echet differentiability of the map $\bg \mapsto \bx(\cdot \, ; \bg)$. However, the setup is
different from that of proving continuous Fr\'echet differentiability of the ODE solution with respect to parameters $\bp \in \real^{n_p}$ 
due to the coupling between the model and the ODE solution, which is not present in the parametric setting.
At the heart of our analysis is the continuous Fr\'echet differentiability of a (somewhat nonstandard) superposition or Nemytskii operator.

In \cref{sec:ODE_error}, we will use the sensitivity results of \cref{sec:sensitivity} to establish a new approximate upper bound for the error
 $\| \bx(\cdot \, ; \widehat{\bg}) -  \bx(\cdot \, ; \bg_*) \|$ (in some suitable norm or seminorm) given a pointwise bound for $| \widehat{\bg} -  \bg_* |$ (understood componentwise).
Classical ODE perturbation results such as those in 
\cite[Sec.~I.10]{EHairer_SONorsett_GWanner_1993a}, \cite{GSoderlind_2006a} provide 
a bound for the error $\bx(t; \bg_*) -  \bx(t; \widehat{\bg})$, $t \in \overline{I} = [t_0, t_f]$, which we will review 
in \cref{sec:ODE_perturbation}. However, this bound can be very pessimistic, especially
when $t - t_0$ becomes larger. In our example shown in \cref{sec:numerics_flap}, this
bound becomes practically useless even for small $t - t_0$. This has motivated the sensitivity-based bound
we will develop in \cref{sec:ODE_sensitivity_sol}. The idea is to approximate 
\[
        \bx(\cdot \, ; \widehat{\bg}) - \bx(\cdot \, ;  \bg_*) \approx \bx_\bg(\widehat{\bg})  (\widehat{\bg} - \bg_*),
\] 
where $\bx_\bg(\widehat{\bg})$ denotes the Fr\'echet derivative of $\bg \mapsto \bx(\cdot \, ; \bg)$ at $\bg = \widehat{\bg}$,
then use a bound of the error in the component function along the
computed trajectory $\widehat{\bx} = \bx(\cdot \, ; \widehat{\bg})$ to obtain an upper bound for the error estimate $\| \bx_\bg(\widehat{\bg})  (\widehat{\bg} - \bg_*) \|$ (in some appropriate norm or seminorm), which is an approximate upper bound of $\| \bx(\cdot \, ; \widehat{\bg}) - \bx(\cdot \, ; \bg_*) \|$ when $\widehat{\bg} - \bg_*$ is relatively small.
Specifically, if  $$\big|  \widehat{\bg}\big( t, \widehat{\bx}(t) \big) - \bg_*\big( t, \widehat{\bx}(t) \big) \big| \leq  \bepsilon\big(t, \widehat{\bx}(t)\big), \qquad \aall t \in I,$$ where the absolute value and the inequality are applied componentwise  and $\bepsilon: I \times  \real^{n_x} \rightarrow \real^{n_g}$ is an error bound for the component function, we formulate and solve a linear quadratic optimal control
problem to obtain an approximate upper bound for $\| \bx_\bg(\widehat{\bg})  (\widehat{\bg} - \bg_*)\|$ using $\bepsilon\big(t, \widehat{\bx}(t)\big)$, $t \in I$.
In our examples shown in \cref{sec:numerics_flap}, this new bound provides excellent estimates for the error
$\| \bx(\cdot \, ; \widehat{\bg}) - \bx(\cdot \, ; \bg_*) \|$ provided $\bepsilon\big(t, \widehat{\bx}(t)\big)$ is a relatively tight upper bound 
for $\big|  \widehat{\bg}\big( t, \widehat{\bx}(t) \big) - \bg_*\big( t, \widehat{\bx}(t) \big) \big|$ and $\widehat{\bg}$ is relatively close to $\bg_*$.

Our new (approximate) bound for the ODE solution error $\| \bx(\cdot \, ; \widehat{\bg}) - \bx(\cdot \, ; \bg_*) \|$
comes at the cost of solving a linear quadratic optimal control problem, which has some theoretical shortcomings that will be discussed in \cref{sec:ODE_sensitivity_sol}.
However, in applications where the evaluation of the true $\bg_*$ is computationally expensive
but the evaluation of an approximate surrogate  $\widehat{\bg}$ is not, the extra expense of solving the  
linear quadratic optimal control problem is less expensive than working with the true $\bg_*$ and
yields good estimates for the ODE solution error in practice.
We are utilizing this in other work to adapt surrogate models for $\bg_*$ from evaluations of  $\bg_*$ at
points $(t, x) \in I \times \real^{n_x}$ along the current trajectory.

The linear quadratic optimal control problem can be avoided if only an estimate for the error in a
quantity of interest $\widetilde{q}(\bg) :=  q \big(\bx(\cdot \, ; \bg), \bg \big)$ is desired. Instead of sensitivities,
a so-called adjoint equation can be used to express the Fr\'echet derivative of $\bg \mapsto \widetilde{q}(\bg)$.
As we show in \cref{sec:ODE_sensitivity_qoi},  our approximate upper bound for the error $| \widetilde{q}(\widehat{\bg}) - \widetilde{q}(\bg_*) |$ can be obtained by solving a linear program with a simple analytical solution at the expense of one linear adjoint ODE solve. This approach avoids the theoretical issues associated with the linear quadratic optimal control problem and yields a much more easily computable error bound.

{\bf Notation.} 
We will use $\| \cdot \|$ to denote a vector norm on $\real^m$ (where $m$ depends on the context) or an 
induced matrix norm.
By $\mathcal{B}_R(0) \subset \real^m$ we denote the closed ball in $\real^m$ around zero with radius $R>0$.
When infinite-dimensional normed linear spaces are considered, the norm will always be specified explicitly 
using subscripts.

Given an interval $I = (t_0, t_f)$,  $\big( L^\infty(I) \big)^m$ denotes the Lebesgue space of essentially bounded functions on $I$ with
values in $\real^m$, and $\big( W^{1, \infty}(I) \big)^m$ denotes the Sobolev space of functions on $I$ with 
values in $\real^m$ that are weakly differentiable on $I$ and have essentially bounded derivative.

We typically use bold font for vector- or matrix-valued functions and regular font for scalars, vectors, matrices, and scalar-valued functions (except states, which will be boldface). For example, the function $\bx: I \rightarrow \real^{n_x}$ has values $\bx(t) \in \real^{n_x}$, and $x \in \real^{n_x}$ denotes a vector. This distinction will be useful when
studying compositions of functions. Also, when using subscripts for derivatives, regular subscripts will be used to denote partial derivatives with respect to a vector, while boldface subscripts will be used to denote Fr\'echet derivatives with respect to a function.

\section{Sensitivity Analysis} \label{sec:sensitivity}
In this section we first specify the function space setting for  \cref{eq:IVP}, and then we
establish sensitivity results for the map  $\bg \mapsto \bx(\cdot \, ; \bg)$ or for a quantity of interest
that depends on $\bg \mapsto \bx(\cdot \, ; \bg)$.

\subsection{Problem Setting}   \label{sec:problem_formulation}
We seek solutions of the IVP \cref{eq:IVP} in the sense of Carath\'eodory, i.e., the right-hand side $(t, x) \mapsto \bff\big( t, x, \bg(t, x) \big)$ is assumed to be measurable in $t$ and continuous in $x$.
The reason for this choice is that one often wants to consider an IVP that depends on a non-smooth input $\bu: I \rightarrow \real^{n_u}$, which may be written as
\begin{equation} \label{eq:IVP_control}
	\bx'(t) = \widetilde{\bff}\Big( t, \bx(t), \bu(t), \widetilde{\bg}\big(t, \bx(t), \bu(t) \big) \Big), \; \aall t \in I, \qquad
	\bx(t_0) = x_0.
\end{equation}
For example, the state equations in many optimal control problems are of the form \cref{eq:IVP_control} with
controls $\bu \in \big(L^\infty(I)\big)^{n_u}$; see, e.g., \cite{MGerdts_2012a}, \cite{EPolak_1997}.
To make our setting applicable with 
$(t, x) \mapsto   \bff\big( t, x, \bg(t, x ) \big) := \widetilde{\bff}\big( t, x, \bu(t), \widetilde{\bg}\big(t, x, \bu(t) \big) \big)$
for controls $\bu \in \big(L^\infty(I)\big)^{n_u}$,
we must allow functions $\bff$ and $\bg$ that are not continuous in $t$.

Existence and uniqueness of solutions to the IVP \cref{eq:IVP} can be proven, e.g., by adapting the results
in  \cite[Sec.~1]{AFFilippov_1988a} or in \cite[Sec.~5.6]{EPolak_1997}. We use \cite{AFFilippov_1988a}.

The following assumptions are used to ensure existence and uniqueness of solutions to the IVP \cref{eq:IVP}.
The assumptions can be weakened if one only needs existence of a solution locally around $t_0$; see \cite[Sec.~1]{AFFilippov_1988a}.
In the following integrability is understood in the Lebesgue sense.
\begin{assumption} \label{as:IVP_unique_solution}
	Let the following conditions hold for \cref{eq:IVP}:
	
   \begin{itemize}
   \item[(i)]   The function $\bff( t, x, g )$ is continuous in $x \in \real^{n_x}$ and $g  \in \real^{n_g}$ for almost all $t \in I$,
                   it is measurable in $t$ for each $x \in \real^{n_x}$ and $g  \in \real^{n_g}$, and there
                   exists a square integrable function $m_f$ such that $$\| \bff( t, x, g ) \| \le m_f(t) (1 + \| g \|), \qquad \aall t \in I \mbox{ and all } x \in \real^{n_x}, g \in \real^{n_g}. $$ 
   \item[(ii)]   There exists a square integrable function $l_f$ such that 
   \begin{align*}
   \| \bff( t, x_1, g_1 ) -  \bff( t, x_2, g_2 )  \| &\le l_f(t) ( \| x_1 - x_2  \| + \| g_1 - g_2  \| ), \\ &  \qquad \aall t \in I \mbox{ and all } x_1, x_2 \in \real^{n_x}, g_1, g_2 \in \real^{n_g}.
   \end{align*}
    \item[(iii)]   The function $\bg( t, x )$ is continuous in $x \in \real^{n_x}$ for almost all $t \in I$,
                   it is measurable in $t$ for each $x \in \real^{n_x}$, and there
                   exists a square integrable function $m_g$ such that $$\| \bg( t, x ) \| \le m_g(t), \qquad \aall t \in I \mbox{ and all } x \in \real^{n_x}.$$  
   \item[(iv)]   There exists a square integrable function $l_g$ such that $$\| \bg( t, x_1 ) -  \bg( t, x_2 )  \| \le l_g(t)  \| x_1 - x_2  \|, \qquad \aall t \in I \mbox{ and all } x_1, x_2 \in \real^{n_x}.$$
    \item[(v)]  The functions $m_f$, $m_g$ in (i) and (iii) satisfy $m_f, m_g \in L^\infty(I)$.
\end{itemize}
\end{assumption}

\begin{theorem} \label{th:IVP_unique_solution}
    If \cref{as:IVP_unique_solution}~(i), (iii) \ are satisfied, then 
     the IVP \cref{eq:IVP} has a solution on the entire interval $I$. 
     If \cref{as:IVP_unique_solution}~(i)-(iv) \  are satisfied, then 
     the IVP \cref{eq:IVP} has a unique solution on $I$. 
     If \cref{as:IVP_unique_solution}~(i)-(v) \  are satisfied, then 
     the IVP \cref{eq:IVP} has a unique solution $\bx \in \WW$.
\end{theorem}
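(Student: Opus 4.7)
The plan is to reduce the IVP \eref{eq:IVP} to the standard Carath\'eodory form $\bx'(t) = \widetilde{\bff}(t, \bx(t))$ by defining the composite right-hand side $\widetilde{\bff}(t, x) \deq \bff(t, x, \bg(t, x))$, verify the hypotheses of the classical existence, uniqueness, and regularity results in \cite{AFFilippov_1988a} at the level of $\widetilde{\bff}$, and then invoke those results directly. The three conclusions of the theorem correspond to the three levels of regularity placed on the data, so I would proceed in three steps of increasing strength.

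For existence under Assumptions~\ref{as:IVP_unique_solution}~(i) and (iii), I would first argue that $\widetilde{\bff}$ is a Carath\'eodory function. Continuity of $\widetilde{\bff}(t, \cdot)$ for a.e.\ $t$ is immediate from composing $x \mapsto \bg(t, x)$ (continuous by (iii)) with $(x, g) \mapsto \bff(t, x, g)$ (continuous by (i)). Measurability of $\widetilde{\bff}(\cdot, x)$ at fixed $x$ relies on the standard fact that a function that is measurable in $t$ and continuous in $g$ is jointly Borel-measurable in $(t, g)$; composing with the measurable map $t \mapsto \bg(t, x)$ then preserves measurability. The bounds in (i) and (iii) together give the $x$-independent majorant $\|\widetilde{\bff}(t, x)\| \leq m_f(t) m_g(t)$, which supplies the integrable dominating function required by Filippov's existence theorem on the whole interval $I$.

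For uniqueness under (i)--(iv), I would combine (ii) and (iv) to establish a Lipschitz estimate for $\widetilde{\bff}$ in $x$:
\begin{align*}
\|\widetilde{\bff}(t, x_1) - \widetilde{\bff}(t, x_2)\|
&\leq l_f(t)\big(\|x_1 - x_2\| + \|\bg(t, x_1) - \bg(t, x_2)\|\big) \\
&\leq \big(l_f(t) + l_f(t) l_g(t)\big)\|x_1 - x_2\|.
\end{align*}
Since $I$ is bounded and $l_f, l_g \in L^2(I)$, both $l_f$ and $l_f l_g$ lie in $L^1(I)$ (the second by Cauchy--Schwarz), so the Lipschitz constant is integrable, and a standard Gronwall argument applied to the difference of two Carath\'eodory solutions forces uniqueness on $I$. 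Adding (v) then places the product $m_f m_g$ in $L^\infty(I)$, so $\|\bx'(t)\| \leq m_f(t) m_g(t)$ pointwise a.e.\ gives $\bx' \in \LL$; combined with the absolute continuity built into any Carath\'eodory solution, this yields $\bx \in \WW$.

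I expect the main obstacle to be the measurability step in the existence argument: promoting ``measurable in $t$ and continuous in $(x, g)$'' to joint Borel measurability in $(t, g)$, and then justifying that the further composition $t \mapsto \bff(t, x, \bg(t, x))$ remains measurable, is classical but requires careful bookkeeping with the Scorza--Dragoni / Carath\'eodory superposition machinery so that the proof stays genuinely inside Filippov's framework rather than implicitly using continuity in $t$.
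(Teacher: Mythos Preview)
Your proposal is correct and follows essentially the same route as the paper: define the composite right-hand side, verify the Carath\'eodory conditions (continuity in $x$, measurability in $t$, the bound $m_f(t)m_g(t)$), invoke Filippov's Theorems~1 and~2 for existence and uniqueness, and read off $\bx' \in \LL$ from $m_f m_g \in L^\infty(I)$. You are in fact more careful than the paper on the measurability of $t \mapsto \bff\big(t, x, \bg(t, x)\big)$ and on why $l_f(1+l_g)$ is integrable (the paper simply asserts both), so no additional work is needed.
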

\begin{proof} 
     If \cref{as:IVP_unique_solution} (i), (iii) \ are satisfied,
     the composition $\bff\big( t, x, \bg(t, x) \big)$  is continuous in $x \in \real^{n_x}$ for almost all $t \in I$,
     is measurable in $t$ for each $x \in \real^{n_x}$, and satisfies
     \begin{equation} \label{eq:essentially-bounded}
            \big\| \bff\big( t, x, \bg(t, x) \big) \big\| \le m_f(t) (1 + \|  \bg(t, x) \|) \le m_f(t) \big(1 + m_g(t) \big).
     \end{equation}
     Since $m_f$ and $m_g$ are square integrable, both $m_f$ and $m_f m_g$ are integrable, so existence of a solution follows from Theorem~1 in \cite[p.~4]{AFFilippov_1988a}.
  
     If \cref{as:IVP_unique_solution} (ii), (iv) \ are satisfied, the 
     composition satisfies 
     \begin{align*}\big\| \bff\big( t, x_1, \bg(t, x_1) \big) -  \bff\big( t, x_2, \bg(t, x_2) \big)  \big\| &\le l_f(t) \big(1+l_g(t)\big)  \| x_1 - x_2  \|, \\ & \qquad \aall t \in I \mbox{ and all } x_1, x_2 \in \real^{n_x}, g_1, g_2 \in \real^{n_g},
     \end{align*}
     and $l_f(t) \big(1+l_g(t)\big)$ is likewise integrable.
     Uniqueness of the solution follows from Theorem~2 in \cite[p.~5]{AFFilippov_1988a}.
     
     If $m_f, m_g \in L^\infty(I)$, then $m_f m_g \in L^\infty(I)$, and so it follows from  \cref{eq:essentially-bounded}
     that $\bx' \in \LL$. 
\end{proof}

\subsection{Fr\'echet Differentiability of the Dynamics} \label{sec:frechet-differentiability-rhs}
To establish sensitivity of the solution of the IVP \cref{eq:IVP} with respect to the function $\bg$,
we consider the IVP \cref{eq:IVP} as an operator equation in the functions $\bx$ and $\bg$.
The main ingredient of this operator equation is the right-hand side operator. To define this map, we
first need to specify the function space for $\bg$.

The set of component functions $\bg$ is given by the Banach space
\begin{subequations} \label{eq:g-space}
\begin{align}
	\big(\cG^k(I)\big)^{n_g} := \big\{ \bg: I \times \real^{n_x} \rightarrow \real^{n_g} \; : \; 
	                   & \bg( t, x ) \mbox{ is $k$-times continuously partially }   \nonumber \\
	                   & \mbox{differentiable  with respect to } x \in \real^{n_x}  \mbox{ for } \aall t \in I,   \nonumber \\
                           &   \mbox{is measurable in $t$ for each $x \in \real^{n_x}$, and }  
                             \| \bg \|_{(\cG^k(I))^{n_g}}  < \infty \big\},
\end{align}
where
\begin{equation} \label{eq:sensitivity:g-norm}
	\| \bg \|_{(\cG^k(I))^{n_g}} := \sum_{n=0}^k \; \esssup_{t \in I} \sup_{x \in \real^{n_x}} \left\| \frac{\partial^n}{\partial x^n} \bg(t, x) \right\|.
\end{equation}
\end{subequations}
We are primarily interested in the cases $k = 1$ and $k = 2$. 
In these cases, we use
$\bg_x(t, x) \in \real^{n_g \times n_x}$ to denote the partial Jacobian of $\bg$ with respect to $x$ at $t \in I$, $x \in \real^{n_x}$,
and $\bg_{xx}(t, x) \in \real^{n_g \times n_x \times n_x}$ to denote the partial  Hessian of $\bg$ with respect to $x$ at $t \in I$, $x \in \real^{n_x}$.

Because of \cref{eq:sensitivity:g-norm}, functions $\bg \in \big(\cG^1(I)\big)^{n_g}$ always satisfy \cref{as:IVP_unique_solution}(iii) with
\[
	m_g(t) \equiv \| \bg \|_{(\cG^0(I))^{n_g}}.
\]
Furthermore, this $m_g$ belongs to $L^\infty(I)$, so that part of \cref{as:IVP_unique_solution}(v) is always satisfied when 
taking $\bg \in \big(\cG^1(I)\big)^{n_g}$, and thus in such cases the square integrability of $m_f$ and $l_f$ in conditions (i), (ii) of \cref{as:IVP_unique_solution} may be weakened to integrability.
Additionally, due to the boundedness of $\bg_x$, functions $\bg \in \big(\cG^1(I)\big)^{n_g}$ always satisfy \cref{as:IVP_unique_solution}(iv) with
\[
	l_g(t) \equiv \| \bg \|_{(\cG^1(I))^{n_g}}.
\]
Note also that for $\ell > k$ the space  $\big(\cG^\ell(I)\big)^{n_g}$  is continuously embedded into $\big(\cG^k(I)\big)^{n_g}$.

The right-hand side operator is a function
\begin{subequations}   \label{eq:F-superposition}
\begin{align}
     \BF_k:   \LL \times \big(\cG^k(I)\big)^{n_g} \rightarrow  \LL
\end{align}
defined by
\begin{align}
	\BF_k(\bx, \bg)(t) := \bff\Big(t, \bx(t), \bg\big(t, \bx(t)\big)\Big).
\end{align}
\end{subequations}
The operator \cref{eq:F-superposition} is a superposition or Nemytskii operator; see, e.g., 
\cite{JAppell_PPZabrejko_1990a},  \cite[Sec.~4.3.2]{FTroeltzsch_2010a}.
However, in contrast to standard superposition or Nemytskii operators,
\cref{eq:F-superposition} depends on $\bx$ directly through the second argument of $\bff$ and
also through the composition $\bg(t, \bx)$. 
Under \cref{as:IVP_unique_solution}, the range of this operator is contained in $\LL$ due to \cref{eq:essentially-bounded}.
We use the subscript $k$ in \cref{eq:F-superposition} to emphasize the change in the domain of the operator.

We are interested in the differentiability properties of \cref{eq:F-superposition}.
The first result concerns the Fr\'echet differentiability of $\BF_1$ at a point $(\bx, \bg) \in \LL \times \big(\cG^1(I)\big)^{n_g}$, which requires
some additional smoothness assumptions on $\bff$. These assumptions are consistent with those made in \cite[Sec.~4.3.2]{FTroeltzsch_2010a} 
for the Fr\'echet differentiability
of (standard) Nemytskii operators in $L^\infty$ spaces.

\begin{assumption} \label{as:assumption1}
   Let the following conditions hold, in addition to those of \cref{as:IVP_unique_solution}:
   \begin{itemize}   
   \item[(i)]   The function $\bff : I \times \real^{n_x} \times \real^{n_g} \rightarrow \real^{n_x}$ 
                   is continuously partially differentiable with respect to $x \in \real^{n_x}$ and $g  \in \real^{n_g}$ for almost all $t \in I$ and
                   is measurable in $t$ for each $x \in \real^{n_x}$ and $g  \in \real^{n_g}$.
                   
   \item[(ii)]   There exists $K > 0$ such that $ \| \bff_x( t, 0, 0 )  \| \le K$ and 
                    $ \| \bff_g( t, 0, 0 )  \| \le K$  for almost all $t \in I$.
                   
   \item[(iii)]    For all $R > 0$ there exists $L(R) > 0$ such that
    \begin{align*}   
    	\| \bff_x(t, x_1, g_1) - \bff_x(t, x_2, g_2) \| &+ \| \bff_g(t, x_1, g_1) - \bff_g(t, x_2, g_2) \|
          \leq L(R) ( \| x_1 - x_2 \| + \| g_1 - g_2 \| ), \\
          &\qquad\qquad\qquad\qquad  \aall  t \in I  \mbox{ and all } x_1, x_2 \in \cB_R(0), \; g_1, g_2 \in \cB_R(0).
    \end{align*}
    \end{itemize}
\end{assumption}

The following theorem establishes Fr\'echet differentiability of $\BF_1$ at a point $(\bx, \bg) \in \LL \times \big(\cG^1(I)\big)^{n_g}$ 
provided the Jacobian of $\bg \in \big(\cG^1(I)\big)^{n_g}$ satisfies a local Lipschitz condition.

\begin{theorem} \label{thm:F-Frechet_point}
    If \cref{as:assumption1} holds and if 
    $\bg \in \big(\cG^1(I)\big)^{n_g}$ has the property that for all $R > 0$ there exists an $L(R)$ such that
    \begin{align}  \label{eq:Lipschitz_g_x_local}
    		\| \bg_x(t, x_1) - \bg_x(t, x_2) \| &\leq L(R) \| x_1 - x_2 \|,  
		& \aall t \in I \;   \mbox{ and all }  x_1, x_2 \in \cB_R(0), 
    \end{align}
    then $\BF_1$ defined in \cref{eq:F-superposition} is Fr\'echet differentiable at $(\bx, \bg) \in \LL \times \big(\cG^1(I)\big)^{n_g}$, and its derivative is given by
    \begin{align}  \label{eq:F-Frechet_deriv}
      	[\BF_1'(\bx, \bg) (\delta \bx, \delta \bg)](t) 
	&= \Big[ \bff_x\Big( t, \bx(t), \bg\big(t, \bx(t) \big) \Big) 
	              + \bff_g\Big( t, \bx(t), \bg\big(t, \bx(t) \big) \Big) \bg_x\big(t, \bx(t) \big) \Big] \delta \bx(t)    \nonumber \\ 
	 &\qquad + \bff_g\Big( t, \bx(t), \bg\big( t,  \bx(t) \big) \Big) \delta \bg\big(t,  \bx(t) \big).
    \end{align}
\end{theorem}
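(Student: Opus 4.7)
The plan is to verify Fr\'echet differentiability by (i) confirming that the expression in \eref{eq:F-Frechet_deriv} defines a bounded linear operator from $\LL\times\cG^1$ into $\LL$, and (ii) estimating the first-order Taylor remainder and showing that it is $o(\|\delta\bx\|_{L^\infty}+\|\delta\bg\|_{\cG^1})$ in the $L^\infty$ norm.

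For step (i), fix $R$ with $\|\bx\|_{L^\infty}\le R$ and $\|\bg\|_{\cG^1}\le R$. Assumption~\ref{as:assumption1}(ii) bounds $\bff_x(t,0,g)$ and $\bff_g(t,0,g)$ by $K(\|g\|+1)$, and the local Lipschitz bound of Assumption~\ref{as:assumption1}(iii) then yields an essential uniform bound on $\bff_x(t,\bx(t),\bg(t,\bx(t)))$ and $\bff_g(t,\bx(t),\bg(t,\bx(t)))$. Combined with $\|\bg_x(t,\bx(t))\|\le\|\bg\|_{\cG^1}$ and $\|\delta\bg(t,\bx(t))\|\le\|\delta\bg\|_{\cG^0}$, each of the three summands on the right-hand side of \eref{eq:F-Frechet_deriv} is bounded pointwise by a constant times $\|\delta\bx\|_{L^\infty}+\|\delta\bg\|_{\cG^1}$, establishing bounded linearity into $\LL$.

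For step (ii), set $\tilde\bx=\bx+\delta\bx$, $\tilde\bg=\bg+\delta\bg$, and decompose the remainder as
\begin{align*}
r(t) &:= \BF_1(\tilde\bx,\tilde\bg)(t)-\BF_1(\bx,\bg)(t)-[\BF_1'(\bx,\bg)(\delta\bx,\delta\bg)](t) \\
     &= R_f(t)+\bff_g\bigl(t,\bx(t),\bg(t,\bx(t))\bigr)\,R_g(t),
\end{align*}
where $R_f(t)$ is the standard second-order Taylor remainder obtained by expanding $\bff$ in its last two arguments about $(\bx(t),\bg(t,\bx(t)))$, and
\[
R_g(t) = \bigl[\delta\bg(t,\tilde\bx(t))-\delta\bg(t,\bx(t))\bigr]+\bigl[\bg(t,\tilde\bx(t))-\bg(t,\bx(t))-\bg_x(t,\bx(t))\delta\bx(t)\bigr].
\]
The first bracket equals $\int_0^1 \delta\bg_x(t,\bx(t)+s\delta\bx(t))\,\delta\bx(t)\,ds$ and is bounded pointwise by $\|\delta\bg\|_{\cG^1}\|\delta\bx(t)\|$; the second equals $\int_0^1 [\bg_x(t,\bx(t)+s\delta\bx(t))-\bg_x(t,\bx(t))]\,\delta\bx(t)\,ds$ and by the local Lipschitz hypothesis \eref{eq:Lipschitz_g_x_local} is bounded by $\tfrac{1}{2}L(R')\|\delta\bx(t)\|^2$ for all sufficiently small $\|\delta\bx\|_{L^\infty}$. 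The same integral-remainder technique applied to $R_f$, together with Assumption~\ref{as:assumption1}(iii), yields $\|R_f(t)\|\le \tfrac{1}{2}L(R')\bigl(\|\delta\bx(t)\|+\|\tilde\bg(t,\tilde\bx(t))-\bg(t,\bx(t))\|\bigr)^2$; the bound $\|\tilde\bg(t,\tilde\bx(t))-\bg(t,\bx(t))\|\le\|\delta\bg\|_{\cG^0}+\|\bg\|_{\cG^1}\|\delta\bx\|_{L^\infty}$ then gives $\|r\|_{L^\infty}=O\bigl((\|\delta\bx\|_{L^\infty}+\|\delta\bg\|_{\cG^1})^2\bigr)$.

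The main obstacle is the cross term $\delta\bg(t,\tilde\bx(t))-\delta\bg(t,\bx(t))$: since $\delta\bg$ is controlled only in $\cG^1$, no smoothness of $\delta\bg$ beyond the essential supremum bound on $\delta\bg_x$ can be used. That supremum bound is precisely what the $\cG^1$ norm encodes, and it is what makes the cross term uniformly small; this is the structural reason why the target space for $\bg$ must be $\cG^1$ rather than $\cG^0$. The fact that $\bg$ enters \eref{eq:F-superposition} both as an argument of $\bff$ and through the composition $\bg(\cdot,\bx(\cdot))$ is what distinguishes this setting from classical Nemytskii operator differentiability results in $L^\infty$.
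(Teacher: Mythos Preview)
Your proposal is correct, and the decomposition you use is genuinely different from the paper's. The paper splits the remainder as $r_1[t]+r_2[t]$, where $r_1$ first moves $\bx\to\bx+\delta\bx$ while keeping $\bg$ fixed (so $r_1$ is the remainder of the composite map $x\mapsto\bff(t,x,\bg(t,x))$), and $r_2$ then moves $\bg\to\bg+\delta\bg$ at the already-shifted point $\bx+\delta\bx$. Each piece is handled by its own integral-remainder argument, and $r_2$ is further split into three subterms $r_{2,1},r_{2,2},r_{2,3}$. Your route instead Taylor-expands $\bff$ jointly in its $(x,g)$ arguments to isolate $R_f$, and pushes all of the ``inner'' nonlinearity into the single term $\bff_g\cdot R_g$, where $R_g$ collects the Taylor remainder of $\bg$ in $x$ together with the cross term $\delta\bg(t,\tilde\bx)-\delta\bg(t,\bx)$. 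Your decomposition is somewhat more compact and makes the role of the $\cG^1$ norm on $\delta\bg$ transparent in a single place; the paper's sequential split is more explicit about which Lipschitz constant enters where and avoids having to bound $\tilde\bg(t,\tilde\bx(t))-\bg(t,\bx(t))$ inside the $R_f$ estimate. One small point: your phrase ``for all sufficiently small $\|\delta\bx\|_{L^\infty}$'' should also restrict $\|\delta\bg\|_{\cG^1}$, since the intermediate argument $\bg(t,\bx(t))+s\bigl(\tilde\bg(t,\tilde\bx(t))-\bg(t,\bx(t))\bigr)$ in the $R_f$ bound must stay in $\cB_{R'}(0)$.
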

The proof of \cref{thm:F-Frechet_point} is given in \cref{sec:sensitivity_proof}.
    
The assumptions of \cref{thm:F-Frechet_point} are not strong enough to conclude \emph{continuous} Fr\'echet differentiability of the operator $\BF_1$ 
at the point $(\bx, \bg) \in \LL \times \big(\cG^1(I)\big)^{n_g}$, 
as any neighborhood of $\bg \in \big(\cG^1(I)\big)^{n_g}$ contains functions whose derivatives are not locally Lipschitz, even if $\bg$ satisfies \cref{eq:Lipschitz_g_x_local}. 
Therefore, we consider the operator $\BF_2$ instead, which is an operator from
$ \LL \times \big(\cG^2(I)\big)^{n_g}$ to $ \LL$. In so doing, we have restricted the component functions from $\big(\cG^1(I)\big)^{n_g}$ to the smaller space $\big(\cG^2(I)\big)^{n_g}$. As the following theorem shows, $\BF_2$ is in fact continuously Fr\'echet differentiable.

\begin{theorem} \label{thm:F-Frechet_global}
    If \cref{as:assumption1}  holds, the operator  $\BF_2 :  \LL \times \big(\cG^2(I)\big)^{n_g} \rightarrow  \LL$
    given by \cref{eq:F-superposition} is continuously Fr\'echet differentiable, and its derivative is given by
    \cref{eq:F-Frechet_deriv}. Moreover, the derivative is locally Lipschitz continuous.
\end{theorem}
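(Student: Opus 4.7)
My plan is to deduce Theorem~\ref{thm:F-Frechet_global} from Theorem~\ref{thm:F-Frechet_point} for pointwise differentiability, and then establish continuity of the derivative by a term-by-term estimate that exploits the extra regularity built into $\cG^2$. Any $\bg \in \cG^2$ has $\|\bg_{xx}\|_\infty \le \|\bg\|_{\cG^2}$, so the mean value theorem yields $\|\bg_x(t, x_1) - \bg_x(t, x_2)\| \le \|\bg\|_{\cG^2}\|x_1 - x_2\|$ for almost all $t \in I$ and all $x_1, x_2 \in \real^{n_x}$; in particular, \eref{eq:Lipschitz_g_x_local} holds with $L(R)$ independent of $R$. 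Combined with the embedding $\cG^2 \hookrightarrow \cG^1$, Theorem~\ref{thm:F-Frechet_point} yields \frechet differentiability of $\BF_2$ at every $(\bx, \bg) \in \LL \times \cG^2$, with derivative given by \eref{eq:F-Frechet_deriv}.

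To prove continuity, let $(\bx_n, \bg_n) \to (\bx, \bg)$ in $\LL \times \cG^2$ and define
\[
A(\by, \bh)(t) := \bff_x\Big(t, \by(t), \bh\big(t, \by(t)\big)\Big) + \bff_g\Big(t, \by(t), \bh\big(t, \by(t)\big)\Big)\bh_x\big(t, \by(t)\big),
\]
\[
B(\by, \bh)(t) := \bff_g\Big(t, \by(t), \bh\big(t, \by(t)\big)\Big),
\]
so that $\BF_2'(\by, \bh)(\delta\bx, \delta\bg)(t) = A(\by, \bh)(t)\delta\bx(t) + B(\by, \bh)(t)\delta\bg(t, \by(t))$. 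For $\|\delta\bx\|_{L^\infty} + \|\delta\bg\|_{\cG^2} \le 1$, I would split $[\BF_2'(\bx_n, \bg_n) - \BF_2'(\bx, \bg)](\delta\bx, \delta\bg)(t)$ into
\[
[A(\bx_n, \bg_n) - A(\bx, \bg)](t)\delta\bx(t) + [B(\bx_n, \bg_n) - B(\bx, \bg)](t)\delta\bg(t, \bx(t))
\]
\[
+ B(\bx_n, \bg_n)(t)\big[\delta\bg(t, \bx_n(t)) - \delta\bg(t, \bx(t))\big].
\]
The third summand is controlled uniformly over the unit ball by $\|B(\bx_n, \bg_n)\|_{L^\infty}\|\delta\bg\|_{\cG^1}\|\bx_n - \bx\|_{L^\infty} \to 0$, with $B(\bx_n, \bg_n)$ uniformly bounded via Assumption~\ref{as:assumption1}. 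For the first two summands, I would use Assumption~\ref{as:assumption1}(iii) on a fixed ball containing all iterates, together with $\|\bx_n - \bx\|_{L^\infty} \to 0$ and $\|\bg_n(\cdot, \bx_n(\cdot)) - \bg(\cdot, \bx(\cdot))\|_{L^\infty} \to 0$ (the latter from $\|\bg_n - \bg\|_{\cG^0} \to 0$ and the uniform bound $\|\bg_x\|_\infty \le \|\bg\|_{\cG^1}$), to force $L^\infty$-convergence of all the $\bff_x$ and $\bff_g$ compositions.

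The main obstacle, and the reason for upgrading from $\cG^1$ to $\cG^2$, is the $\bg_x$ factor inside $A$: one must show $\bg_{n,x}(\cdot, \bx_n(\cdot)) \to \bg_x(\cdot, \bx(\cdot))$ in $L^\infty$. I would split this difference as $[\bg_{n,x} - \bg_x](\cdot, \bx_n(\cdot)) + [\bg_x(\cdot, \bx_n(\cdot)) - \bg_x(\cdot, \bx(\cdot))]$; the first piece is bounded by $\|\bg_n - \bg\|_{\cG^2}$, and the second by $\|\bg_{xx}\|_\infty\|\bx_n - \bx\|_{L^\infty} \le \|\bg\|_{\cG^2}\|\bx_n - \bx\|_{L^\infty}$. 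Without an essentially bounded Hessian $\bg_{xx}$, the Jacobian $\bg_x$ need not be Lipschitz in $x$ and this second bound would fail: $\bg_x$ might not converge uniformly along the moving trajectories $\bx_n$ even though $\bx_n \to \bx$ in $L^\infty$. Assembling all the estimates gives $\|\BF_2'(\bx_n, \bg_n) - \BF_2'(\bx, \bg)\|_{\mathrm{op}} \to 0$, which yields continuous \frechet differentiability.
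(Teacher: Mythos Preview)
Your proposal is correct and follows essentially the same route as the paper: Fr\'echet differentiability at each point is inherited from Theorem~\ref{thm:F-Frechet_point} via the Hessian bound in $\cG^2$, and continuity of $\BF_2'$ is obtained by the same term-by-term estimates, with the crucial $\bg_x$ convergence handled exactly as you describe. The only cosmetic differences are that the paper uses an $\epsilon$--$\delta$ argument rather than sequences (in fact obtaining local Lipschitz continuity of $\BF_2'$), and groups the terms slightly differently (their $S_1,S_2$ correspond to your $A$-difference and their $S_3$ to your $B$-difference plus the $\delta\bg$-evaluation shift).
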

The proof of \cref{thm:F-Frechet_global} is given in \cref{sec:sensitivity_proof}.

\subsection{Fr\'echet Differentiability of the ODE Solution} \label{sec:frechet-differentiability-solution}
Now, we revisit the IVP \cref{eq:IVP}. To establish the continuous Fr\'echet differentiability of the solution mapping
$\big(\cG^2(I)\big)^{n_g} \ni \bg \mapsto \bx(\cdot \, ; \bg) \in \WW$ we consider 
the operator
\begin{subequations} \label{eq:psi}
\begin{align}
	\bpsi :  \WW\times \big(\cG^2(I)\big)^{n_g}  & \rightarrow  \LL \times \real^{n_x}
\end{align}
defined by
\begin{align}
	\bpsi(\bx, \bg) 
	            = \begin{pmatrix} \BF_2(\bx, \bg )  -  \bx' \\  \bx(t_0) - \bx_0 \end{pmatrix}. 
\end{align}
\end{subequations}
By construction, the solution $\bx =  \bx(\cdot \, ; \bg)$ of \cref{eq:IVP} satisfies $\bpsi(\bx, \bg) = 0$.

As the following corollary of \cref{thm:F-Frechet_global} shows, continuous Fr\'echet differentiability of \cref{eq:F-superposition} implies continuous Fr\'echet differentiability of \cref{eq:psi}.

\begin{corollary} \label{cor:psi-Frechet_derivative}
If \cref{as:assumption1}  holds, then the map  \cref{eq:psi}
         is continuously Fr\'echet differentiable and the derivative is given by
	\begin{equation*}
	\begin{aligned}
		&\bpsi'(\bx, \bg)(\delta \bx, \delta \bg) 
		= \begin{pmatrix} \BA(\cdot) \delta \bx(\cdot) + \BB(\cdot)\delta \bg \big(\cdot, \bx(\cdot) \big) - \delta \bx'(\cdot) \\
		\delta \bx(t_0)
		 \end{pmatrix}
	\end{aligned}
	\end{equation*}
	where 
	\begin{equation} \label{eq:shorthand}
	\begin{aligned}
	  \BA(\cdot) &:= \bff_x\Big( \cdot,\bx(\cdot), \bg\big(\cdot, \bx(\cdot) \big) \Big)
	                          + \bff_g\Big( \cdot,\bx(\cdot), \bg\big(\cdot, \bx(\cdot) \big) \Big) \bg_x \big(\cdot, \bx(\cdot) \big), \\
	\BB(\cdot)  &:= \bff_g\Big( \cdot,\bx(\cdot), \bg\big(\cdot, \bx(\cdot) \big) \Big).
	\end{aligned}
	\end{equation}
\end{corollary}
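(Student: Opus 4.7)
The plan is to decompose $\bpsi$ into its two coordinates and handle each separately using standard calculus in Banach spaces, then apply the chain rule to assemble the derivative.

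First I would observe that the map $\bx \mapsto \bx'$ is a bounded linear operator from $\WW$ into $\LL$ (this is essentially built into the definition of the Sobolev norm), hence continuously \frechet differentiable with itself as its derivative. Likewise, the evaluation map $\bx \mapsto \bx(t_0)$ is a bounded linear operator from $\WW$ into $\real^{n_x}$, because $W^{1,\infty}(I) \hookrightarrow C(\overline I)$ continuously; therefore the second coordinate $(\bx,\bg)\mapsto \bx(t_0)-\bx_0$ is affine and continuously \frechet differentiable with derivative $(\delta\bx,\delta\bg)\mapsto \delta\bx(t_0)$.

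Next I would handle $\BF_2(\bx,\bg)-\bx'$. The subtraction is linear, so it suffices to prove continuous \frechet differentiability of the map $\widetilde\BF_2: \WW \times \cG^2 \to \LL$, $(\bx,\bg)\mapsto \BF_2(\bx,\bg)$. I would write $\widetilde\BF_2 = \BF_2 \circ (E\times \mathrm{id}_{\cG^2})$ where $E: \WW \hookrightarrow \LL$ is the continuous embedding (bounded with operator norm at most $1$, hence continuously \frechet differentiable as a bounded linear map). By Theorem~\thmref{thm:F-Frechet_global}, $\BF_2: \LL \times \cG^2 \to \LL$ is continuously \frechet differentiable, so the chain rule delivers continuous \frechet differentiability of $\widetilde\BF_2$, with
\[
  \widetilde\BF_2'(\bx,\bg)(\delta\bx,\delta\bg) = \BF_2'(E\bx,\bg)(E\delta\bx,\delta\bg).
\]
Substituting the formula \eref{eq:F-Frechet_deriv} from Theorem~\thmref{thm:F-Frechet_global} and collecting the $\delta \bx$ terms using the shorthand \eref{eq:shorthand} immediately yields the first coordinate of the claimed derivative, namely $\BA(\cdot)\delta\bx(\cdot) + \BB(\cdot)\delta\bg(\cdot,\bx(\cdot)) - \delta\bx'(\cdot)$.

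Finally, because $\bpsi$ is the Cartesian product of these two maps into $\LL$ and $\real^{n_x}$, it is continuously \frechet differentiable with derivative given coordinatewise by the two derivatives computed above, which is exactly the stated formula. The only non-routine ingredient is the continuity of the embedding $\WW \hookrightarrow \LL$ and of point evaluation $\bx\mapsto\bx(t_0)$; both are immediate from the definitions of the $W^{1,\infty}$ norm and the fundamental theorem of calculus, so this corollary is genuinely a short consequence of Theorem~\thmref{thm:F-Frechet_global} rather than presenting a new obstacle.
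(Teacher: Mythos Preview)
Your proposal is correct and follows essentially the same approach as the paper's proof: both invoke Theorem~\ref{thm:F-Frechet_global} together with the continuous embedding $W^{1,\infty}(I)\hookrightarrow L^\infty(I)$ to handle the $\BF_2$ term, and both treat $\bx\mapsto\bx'$ and $\bx\mapsto\bx(t_0)$ as bounded linear operators (the latter via $W^{1,\infty}(I)\hookrightarrow C(\overline I)$). Your write-up is slightly more explicit about the chain rule with the embedding $E$ and the product structure, but the substance is identical.
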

\begin{proof}
	The continuous Fr\'echet differentiability of 
	$$\WW\times \big(\cG^2(I)\big)^{n_g}  \ni (\bx, \bg )
	 \mapsto  \BF_2(\bx, \bg ) \in  \LL$$  is a consequence of \cref{thm:F-Frechet_global} 
	 since $W^{1, \infty}(I)$ is continuously embedded into $L^\infty(I)$. 
	 Furthermore, the mappings 
	 \begin{align*}
	 \WW \ni \bx  &\mapsto \bx' \in   \LL,
	 &\WW \ni \bx  \mapsto \bx(t_0) \in \real^{n_x}
	 \end{align*} 
	 are bounded linear operators, the latter because $W^{1, \infty}(I)$ is continuously embedded into $C(\overline{I})$. 
	 Combining these results and the Fr\'echet derivative \cref{eq:F-Frechet_deriv} imply the desired result.
\end{proof}

From \cref{cor:psi-Frechet_derivative}, we have the partial Fr\'echet derivatives
\begin{equation} \label{eq:partial-frechet-derivatives}
\begin{aligned}
	\bpsi_\bx(\bx, \bg) \delta \bx 
	&= \begin{pmatrix} \Big(  \BA(\cdot)  \delta \bx(\cdot) - \delta \bx'(\cdot) \\  \delta \bx(t_0) \end{pmatrix}, &
	\bpsi_\bg(\bx, \bg) \delta \bg 
	&= \begin{pmatrix} \BB(\cdot)  \delta \bg\big( \cdot, \bx(\cdot) \big) \\ 0 \end{pmatrix}.
\end{aligned}
\end{equation}
The following bijectivity result for $\bpsi_\bx$ allows application of the Implicit Function Theorem.

\begin{lemma} \label{lemma:psi-Frechet_bijection}
If \cref{as:assumption1}  holds and $(\bx, \bg) \in  \WW \times \big(\cG^2(I)\big)^{n_g}$,
then the partial Fr\'echet derivative $\bpsi_\bx(\bx, \bg) : \big(W^{1, \infty}(I) \big)^{n_x} \rightarrow \big(L^\infty(I) \big)^{n_x} \times \real^{n_x}$ is bijective.
\end{lemma}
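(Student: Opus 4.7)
The goal is to show that for every pair $(\bw, v) \in \LL \times \real^{n_x}$ there is exactly one $\delta\bx \in \WW$ such that
\[
	\BA(\cdot) \delta\bx(\cdot) - \delta\bx'(\cdot) = \bw(\cdot) \quad \aein I, \qquad \delta\bx(t_0) = v,
\]
since this is exactly what bijectivity of $\bpsi_\bx(\bx,\bg)$ means in view of \eref{eq:partial-frechet-derivatives}. The approach is to recognize this as a linear, inhomogeneous Carath\'eodory IVP with an essentially bounded coefficient matrix and to apply classical linear ODE theory.

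\textbf{Step 1: essential boundedness of $\BA$.} Because $\bx \in \WW$, it is continuous on $\overline{I}$ and so $\|\bx\|_{L^\infty} \le R_0$ for some $R_0 > 0$. Since $\bg \in \cG^2$, the definition of $\|\cdot\|_{\cG^k}$ in \eref{eq:g-space} gives $\|\bg(t, \bx(t))\| \le \|\bg\|_{\cG^2}$ and $\|\bg_x(t, \bx(t))\| \le \|\bg\|_{\cG^2}$ for a.a.\ $t \in I$. Setting $R := \max\{R_0, \|\bg\|_{\cG^2}\}$ and invoking Assumption~\ref{as:assumption1}(ii)--(iii),
\[
	\big\| \bff_x\big(t, \bx(t), \bg(t, \bx(t))\big) \big\| \le \big\| \bff_x(t, 0, \bg(t, \bx(t))) \big\| + L(R) R_0 \le K(\|\bg\|_{\cG^2} + 1) + L(R) R_0,
\]
and the analogous estimate holds for $\bff_g$. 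From the formula \eref{eq:shorthand} it then follows that $\BA \in (L^\infty(I))^{n_x \times n_x}$ (and likewise $\BB \in (L^\infty(I))^{n_x \times n_g}$, though this is not needed here).

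\textbf{Step 2: existence, uniqueness, and regularity.} With $\BA \in (L^\infty(I))^{n_x \times n_x} \subset (L^1(I))^{n_x \times n_x}$ and $\bw \in \LL \subset (L^1(I))^{n_x}$, the standard Carath\'eodory theory for linear systems (or Picard iteration, or Theorems~1--2 in \cite[Sec.~1]{AFFilippov_1988a} specialized to the linear case) yields a unique absolutely continuous solution $\delta\bx$ on $\overline{I}$; it can be written via variation of constants as $\delta\bx(t) = \BPhi(t) v - \int_{t_0}^t \BPhi(t) \BPhi(s)^{-1} \bw(s)\, ds$, where $\BPhi$ is the fundamental matrix for $\BA$. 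Since $\delta\bx$ is continuous on the compact set $\overline{I}$, it is bounded, so $\BA(\cdot) \delta\bx(\cdot) \in \LL$; hence $\delta\bx' = \BA \delta\bx - \bw \in \LL$ and therefore $\delta\bx \in \WW$.

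\textbf{Conclusion and main obstacle.} Steps 1--2 produce, for each $(\bw, v) \in \LL \times \real^{n_x}$, a unique preimage $\delta\bx \in \WW$, which establishes bijectivity. The only mildly delicate point is Step 1: one needs $\BA$ to be \emph{essentially bounded} rather than merely integrable, and this is precisely where the restriction to $\cG^2$ (uniform bound on $\bg_x$) together with the $W^{1,\infty}$-regularity of $\bx$ (compact range) is used; everything else is routine linear ODE theory.
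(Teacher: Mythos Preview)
Your proof is correct and follows essentially the same approach as the paper: reduce bijectivity of $\bpsi_\bx(\bx,\bg)$ to unique solvability in $\WW$ of the linear Carath\'eodory IVP $\delta\bx' = \BA\,\delta\bx - \bw$, $\delta\bx(t_0)=v$, after noting that $\BA \in (L^\infty(I))^{n_x\times n_x}$. The paper merely asserts $\BA \in L^\infty$ and the well-posedness of the linear IVP, whereas you spell out the bound on $\BA$ via Assumption~\ref{as:assumption1}(ii)--(iii) and the $W^{1,\infty}$-regularity argument for $\delta\bx$; these are welcome details but not a different route.
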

\begin{proof}
	From \cref{eq:partial-frechet-derivatives} it follows that for $(\br, r_0) \in \LL  \times \real^{n_x}$ the equation
	\[
	\bpsi_\bx(\bx, \bg) \delta \bx  = \begin{pmatrix} \br \\ r_0 \end{pmatrix}
	 \]
	is equivalent to the linear initial value problem
	 \begin{align*}
	 	\delta \bx'(t) &= \BA(t) \delta \bx(t) - \br(t), \qquad \aall t \in I, \\
	 	\delta \bx(t_0) &= r_0,
	 \end{align*}
	 with $\BA \in \big(L^\infty(I) \big)^{n_x \times n_x}$ given by \cref{eq:shorthand}.
	 This linear IVP has a unique
	 solution  $\delta \bx \in \WW$.
\end{proof}

\cref{cor:psi-Frechet_derivative} and \cref{lemma:psi-Frechet_bijection} now allow 
application of the Implicit Function Theorem. For completeness, we state  the Implicit Function Theorem
next, with notation adapted to our setting. See, e.g., \cite[Thm.~2.1.14]{MGerdts_2012a},
\cite[Thm.~4.E, p.~250]{EZeidler_1995b}, \cite[Thm.~3.4.10]{SGKrantz_HRParks_2013a}.

\begin{theorem}[Implicit Function Theorem] \label{thm:Sensitivity:intro:IFT}
	Let $\cX, \cG, \cZ$ be Banach spaces, let $D \subset \cX \times \cG$ be a neighborhood of the point $(\overline{\bx}, \overline{\bg}) \in \cX \times \cG$, and let 
	$\bpsi : D \rightarrow \cZ$ 
	be an operator satisfying $\bpsi(\overline{\bx}, \overline{\bg}) = 0_\cZ$. If 
	\begin{enumerate}
		\item[(i)] $\bpsi$ is continuously Fr\'echet differentiable and
		
		\item[(ii)] the partial Fr\'echet derivative $\bpsi_\bx(\overline{\bx}, \overline{\bg})$ is bijective,
	\end{enumerate}
	 then there exist neighborhoods $\cN(\overline{\bx}) \subset \cX$, $\cN(\overline{\bg}) \subset \cG$ and a unique mapping $\bx : \cN(\overline{\bg}) \rightarrow \cN(\overline{\bx})$ that is continuously Fr\'echet differentiable and satisfies
	\[
		\bx(\overline{\bg}) = \overline{\bx} \quad \mbox{and} \quad
		\bpsi\big(\bx(\bg), \bg \big) = 0_\cZ \qquad \mbox{for all } \ \big(\bx(\bg), \bg \big) \in \cN(\overline{\bx}) \times \cN(\overline{\bg}).
	\]
	Moreover, the Fr\'echet derivative is
	\begin{align}
		\bx_\bg(\overline{\bg}) = -\bpsi_\bx(\overline{\bx}, \overline{\bg})^{-1} \, \bpsi_\bg(\overline{\bx}, \overline{\bg}). \label{eq:sensitivity:sensitivity_eqn}
	\end{align}
\end{theorem}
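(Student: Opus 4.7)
The plan is to reduce the implicit equation $\bpsi(\bx, \bg) = 0_\cZ$ to a fixed-point problem and invoke the Banach Fixed Point Theorem. First, since $\bpsi_\bx(\overline{\bx}, \overline{\bg}) : \cX \to \cZ$ is a bounded bijective linear operator between Banach spaces (boundedness follows from continuous \frechet differentiability of $\bpsi$), the Open Mapping Theorem guarantees that its inverse $L := \bpsi_\bx(\overline{\bx}, \overline{\bg})^{-1} : \cZ \to \cX$ is bounded. I would then define
\[
    T(\bx, \bg) := \bx - L\,\bpsi(\bx, \bg),
\]
and observe the equivalence $T(\bx, \bg) = \bx \iff \bpsi(\bx, \bg) = 0_\cZ$, together with $T(\overline{\bx}, \overline{\bg}) = \overline{\bx}$.

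Next, I would show that $T(\cdot, \bg)$ is a contraction on a suitably small ball around $\overline{\bx}$, uniformly in $\bg$ near $\overline{\bg}$. Computing the partial derivative yields $T_\bx(\bx, \bg) = L\bigl[\bpsi_\bx(\overline{\bx}, \overline{\bg}) - \bpsi_\bx(\bx, \bg)\bigr]$, which by continuity of $\bpsi_\bx$ can be made to satisfy $\|T_\bx(\bx, \bg)\|_{\cX \to \cX} \le \tfrac{1}{2}$ on a neighborhood $\cN(\overline{\bx}) \times \cN(\overline{\bg})$. Shrinking $\cN(\overline{\bg})$ further if necessary, continuity of $\bpsi$ gives $\|T(\overline{\bx}, \bg) - \overline{\bx}\|_\cX \le r/2$ where $r$ is the radius of $\cN(\overline{\bx})$, so $T(\cdot, \bg)$ maps $\cN(\overline{\bx})$ into itself. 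The Banach Fixed Point Theorem then supplies, for each $\bg \in \cN(\overline{\bg})$, a unique $\bx(\bg) \in \cN(\overline{\bx})$ with $\bpsi(\bx(\bg), \bg) = 0_\cZ$. Lipschitz continuity of $\bg \mapsto \bx(\bg)$ follows from the standard fixed-point estimate $\|\bx(\bg_1) - \bx(\bg_2)\|_\cX \le 2\,\|T(\bx(\bg_2), \bg_1) - T(\bx(\bg_2), \bg_2)\|_\cX$ combined with continuity of $T$ in $\bg$.

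To establish \frechet differentiability and the sensitivity formula \eref{eq:sensitivity:sensitivity_eqn}, I would expand $\bpsi$ via its \frechet derivative at $(\overline{\bx}, \overline{\bg})$, using $\bpsi(\bx(\bg), \bg) = 0$ and $\bpsi(\overline{\bx}, \overline{\bg}) = 0$:
\[
    0 = \bpsi_\bx(\overline{\bx}, \overline{\bg})(\bx(\bg) - \overline{\bx}) + \bpsi_\bg(\overline{\bx}, \overline{\bg})(\bg - \overline{\bg}) + o\bigl(\|\bx(\bg) - \overline{\bx}\|_\cX + \|\bg - \overline{\bg}\|_\cG\bigr).
\]
Applying $L$ and using the just-established Lipschitz bound $\|\bx(\bg) - \overline{\bx}\|_\cX = O(\|\bg - \overline{\bg}\|_\cG)$ to absorb the remainder into an $o(\|\bg - \overline{\bg}\|_\cG)$ term yields $\bx(\bg) - \overline{\bx} = -L\,\bpsi_\bg(\overline{\bx}, \overline{\bg})(\bg - \overline{\bg}) + o(\|\bg - \overline{\bg}\|_\cG)$, which is exactly \eref{eq:sensitivity:sensitivity_eqn}. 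The same argument applied at an arbitrary $\bg \in \cN(\overline{\bg})$ gives $\bx_\bg(\bg) = -\bpsi_\bx(\bx(\bg), \bg)^{-1} \bpsi_\bg(\bx(\bg), \bg)$.

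Finally, continuity of $\bg \mapsto \bx_\bg(\bg)$ requires that $\bg \mapsto \bpsi_\bx(\bx(\bg), \bg)^{-1}$ be continuous. This follows from a Neumann series argument: the set of invertible bounded linear operators is open in $\mathcal{L}(\cX, \cZ)$ and inversion is continuous there, so composing with continuous maps $\bg \mapsto \bx(\bg)$, $\bpsi_\bx$, and $\bpsi_\bg$ completes the argument. The principal obstacle in this plan is the careful uniform-in-$\bg$ choice of neighborhoods in the contraction step; once those constants are pinned down, the differentiability step is a clean consequence of the first-order Taylor expansion together with the Lipschitz estimate inherited from the fixed-point theorem.
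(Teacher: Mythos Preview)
The paper does not prove this theorem; it merely states it and cites standard references (Gerdts, Zeidler, Krantz--Parks). Your outline is the classical contraction-mapping proof of the Implicit Function Theorem in Banach spaces and is correct as sketched: the reduction to a fixed point via $T(\bx,\bg)=\bx-L\bpsi(\bx,\bg)$, the uniform contraction estimate from continuity of $\bpsi_\bx$, the Lipschitz estimate from the fixed-point inequality, the first-order expansion to extract the derivative formula, and the Neumann-series argument for continuity of the inverse are all standard and in the right order. Since the paper offers no proof of its own, there is nothing to compare against beyond noting that your argument matches the approach in the cited texts (e.g., Zeidler's Theorem~4.E proceeds essentially this way).
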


Finally, we can apply \cref{thm:Sensitivity:intro:IFT} to obtain a sensitivity result for \cref{eq:IVP}.
\begin{theorem} \label{thm:sensitivity_result}
        Let $\bpsi$ be the map  \cref{eq:psi}.  If \cref{as:assumption1}  holds, then for any 
	$(\overline{\bx}, \overline{\bg}) \in \WW \times \big(\cG^2(I)\big)^{n_g}$ 
	satisfying $\bpsi(\overline{\bx}, \overline{\bg}) = 0$ there exist neighborhoods 
	\[
	       \cN(\overline{\bx}) \subset  \WW, \qquad \quad 
	       \cN(\overline{\bg}) \subset \big(\cG^2(I)\big)^{n_g}
	\]
	and a unique mapping $\bx : \cN(\overline{\bg}) \rightarrow \cN(\overline{\bx})$ that is continuously Fr\'echet differentiable and satisfies
	\[
		\bx(\overline{\bg}) = \overline{\bx} \quad \mbox{and} \quad
		\bpsi\big(\bx(\bg), \bg \big) = 0 \qquad \mbox{for all } \ \big(\bx(\bg), \bg\big) \in \cN(\overline{\bx}) \times \cN(\overline{\bg}).
	\]
	Moreover, the sensitivity $\delta \bx := \bx_\bg(\overline{\bg}) \delta \bg$ is the solution of the linear initial value problem
	\begin{equation}    \label{eq:x-sensitivity}
	\begin{aligned}
		\delta \bx'(t) &= \overline{\BA}(t) \, \delta \bx(t) +\overline{\BB}(t) \, \delta \bg\big(t,  \overline{\bx}(t) \big), & \aall t \in I, \\
		\delta \bx(t_0) &= 0,
	\end{aligned}
	\end{equation}
	where $\overline{\BA}$ and $\overline{\BB}$ are given by \cref{eq:shorthand} with $\bx$, $\bg$ replaced by $\overline{\bx}$,
	$\overline{\bg}$, respectively.
\end{theorem}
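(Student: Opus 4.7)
The plan is to verify the hypotheses of the Implicit Function Theorem (Theorem~\ref{thm:Sensitivity:intro:IFT}) with $\cX = \WW$, $\cG = \cG^2$, $\cZ = \LL \times \real^{n_x}$, and $\bpsi$ as in \eref{eq:psi}, then read off the sensitivity equation from formula \eref{eq:sensitivity:sensitivity_eqn}. All three spaces are Banach, so the abstract framework applies.

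First, I would invoke Corollary~\ref{cor:psi-Frechet_derivative} to conclude that $\bpsi$ is continuously \frechet differentiable on $\WW \times \cG^2$, which gives hypothesis (i) of Theorem~\ref{thm:Sensitivity:intro:IFT}. Next, I would invoke Lemma~\ref{lemma:psi-Frechet_bijection} to conclude that the partial derivative $\bpsi_\bx(\overline{\bx}, \overline{\bg}) : \WW \to \LL \times \real^{n_x}$ is bijective. Since $\bpsi_\bx(\overline{\bx}, \overline{\bg})$ is also bounded (as a partial \frechet derivative of a continuously differentiable map between Banach spaces), Banach's isomorphism theorem guarantees that its inverse is bounded, supplying hypothesis (ii).

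With both hypotheses in place, the Implicit Function Theorem yields neighborhoods $\cN(\overline{\bx}) \subset \WW$ and $\cN(\overline{\bg}) \subset \cG^2$ and a unique continuously \frechet differentiable map $\bx : \cN(\overline{\bg}) \to \cN(\overline{\bx})$ with $\bx(\overline{\bg}) = \overline{\bx}$ and $\bpsi(\bx(\bg), \bg) = 0$ on this neighborhood, which is precisely the first claim of the theorem.

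To obtain the sensitivity equation, I set $\delta \bx := \bx_\bg(\overline{\bg}) \delta \bg$ and apply $\bpsi_\bx(\overline{\bx}, \overline{\bg})$ to both sides of \eref{eq:sensitivity:sensitivity_eqn}, giving $\bpsi_\bx(\overline{\bx}, \overline{\bg}) \delta \bx = -\bpsi_\bg(\overline{\bx}, \overline{\bg}) \delta \bg$. Inserting the explicit forms of the partial derivatives from \eref{eq:partial-frechet-derivatives} evaluated at $(\overline{\bx}, \overline{\bg})$ converts this operator identity into the linear initial value problem \eref{eq:x-sensitivity}, where $\overline{\BA}$ and $\overline{\BB}$ are obtained from \eref{eq:shorthand} by substituting $\overline{\bx}$ and $\overline{\bg}$. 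There is no real obstacle here; the proof is essentially bookkeeping, since the substantive work—continuous \frechet differentiability of the Nemytskii-type operator $\BF_2$ and well-posedness of the linearized IVP—has already been carried out in Theorem~\ref{thm:F-Frechet_global} and Lemma~\ref{lemma:psi-Frechet_bijection}.
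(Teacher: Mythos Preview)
Your proposal is correct and matches the paper's own proof essentially line for line: invoke Corollary~\ref{cor:psi-Frechet_derivative} for hypothesis~(i), Lemma~\ref{lemma:psi-Frechet_bijection} for hypothesis~(ii), apply Theorem~\ref{thm:Sensitivity:intro:IFT}, and unpack \eref{eq:sensitivity:sensitivity_eqn} via \eref{eq:partial-frechet-derivatives} to obtain \eref{eq:x-sensitivity}. Your explicit mention of Banach's isomorphism theorem is a welcome clarification that the paper leaves implicit in its statement of the Implicit Function Theorem.
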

\begin{proof}
	The theorem is a consequence of the Implicit Function \cref{thm:Sensitivity:intro:IFT}, whose hypotheses (i) \ and (ii) \ follow from \cref{cor:psi-Frechet_derivative} and
	\cref{lemma:psi-Frechet_bijection} respectively. The IVP \cref{eq:x-sensitivity} follows from applying the partial Fr\'echet derivatives \cref{eq:partial-frechet-derivatives} to the sensitivity equation \cref{eq:sensitivity:sensitivity_eqn}.
\end{proof}

\subsection{Fr\'echet Differentiability of a Quantity of Interest} \label{sec:frechet-differentiability-adjoints}

The Fr\'echet derivative of a quantity of interest (QoI) as a function of the ODE solution $\bx \in \WW$ and the model function $\bg \in \big(\cG^2(I)\big)^{n_g}$ can be computed using adjoints.
As before, let  $I := (t_0, t_f)$. Given functions
\begin{align*}
	\varphi :  \real^{n_x} \rightarrow \real, \qquad \quad l : I \times \real^{n_x} \times \real^{n_g} \rightarrow \real
\end{align*}
consider the QoI
\begin{subequations} \label{eq:QoI}
\begin{equation}
     q:  \WW \times \big(\cG^2(I)\big)^{n_g} \rightarrow \real
\end{equation}
given by
\begin{equation} \label{eq:sensitivity:QoI-xg}
      q(\bx, \bg) := \varphi \big(   \bx(t_f) \big) + \int_{t_0}^{t_f} l \Big(t, \bx(t), \bg \big( t, \bx(t) \big) \Big) \, dt
\end{equation}
and
\begin{equation}
     \widetilde{q}:  \big(\cG^2(I)\big)^{n_g} \rightarrow \real \quad
     \mbox{ given by } \quad  \widetilde{q}(\bg) :=  q \big(\bx(\cdot \, ; \bg), \bg \big)
\end{equation}
\end{subequations}
where $\bx(\cdot \, ; \bg) \in \WW$ is the solution of \cref{eq:IVP} given $\bg \in \big(\cG^2(I)\big)^{n_g}$.

To ensure \cref{eq:QoI} is well-defined, we assume $l$ satisfies \cref{as:assumption1} with $\bff$ replaced by $l$. For the sensitivity analysis, we use the Nemytskii operator
\begin{subequations} \label{eq:l-superposition}
\begin{equation}
	\BL_2 :  \LL \times \big(\cG^2(I)\big)^{n_g} \rightarrow L^\infty(I)
\end{equation}
given by
	\begin{equation}
		\BL_2(\bx, \bg) := l \Big(\cdot, \bx(\cdot), \bg\big( \cdot, \bx(\cdot) \big)  \Big),
	\end{equation}
	\end{subequations}
	cf. $\BF_2$ in \cref{eq:F-superposition}.

\begin{remark}
        \cref{thm:F-Frechet_global} can be applied with $\BF_2$ replaced by $\BL_2$ in \cref{eq:l-superposition} to
        show that $\BL_2$ is continuously Fr\'echet differentiable when \cref{as:assumption1} is satisfied with $\bff$ replaced by $l$, 
        and that the derivative is given by
	\begin{equation} \label{eq:l-Frechet_deriv}
	\begin{aligned}
		[\BL_2'(\bx, \bg) (\delta \bx, \delta \bg)](t) &= \Big[ \nabla_x l\Big( t, \bx(t), \bg\big(t, \bx(t) \big) \Big) 
	              + \bg_x\big(t, \bx(t) \big)^T \nabla_g l\Big( t, \bx(t), \bg\big(t, \bx(t) \big) \Big) \Big]^T \delta \bx(t) \\ 
	 &\quad + \nabla_g l\Big( t, \bx(t), \bg\big( t,  \bx(t) \big) \Big)^T \delta \bg\big(t,  \bx(t) \big).
	\end{aligned}
	\end{equation}
\end{remark}

The following result establishes continuous Fr\'echet differentiability of \cref{eq:QoI} as a consequence of the differentiability of \cref{eq:l-superposition}.
\begin{theorem} \label{thm:sensitivity_result_qoi}
	If \cref{as:assumption1} holds with $\bff$ replaced by $l$ and $\varphi$ is continuously differentiable,
	then $q$ in $\cref{eq:QoI}$ is continuously Fr\'echet differentiable, and its derivative is given by
	\begin{align} \label{eq:q-Frechet_deriv}
		q'(\bx, \bg) (\delta \bx, \delta \bg) 
		= & \nabla_x \varphi \big( \bx(t_f) \big)^T \delta \bx(t_f)   \nonumber \\
		 & + \int_{t_0}^{t_f} \Big[ \nabla_x l\Big( t, \bx(t), \bg\big(t, \bx(t) \big) \Big) 
	                                             + \bg_x\big(t, \bx(t) \big)^T \nabla_g l\Big( t, \bx(t), \bg\big(t, \bx(t) \big) \Big) \Big]^T \delta \bx(t) \nonumber \\
	          &\quad +  \nabla_g l\Big( t, \bx(t), \bg\big( t,  \bx(t) \big) \Big)^T \delta \bg\big(t,  \bx(t) \big) \, dt.
	\end{align}
	If, in addition, \cref{as:assumption1} holds for the function $\bff$ in \cref{eq:IVP}, then $\widetilde{q}$ in \cref{eq:QoI} is continuously Fr\'echet differentiable, and its derivative is given by
	\[
	\widetilde{q}_\bg(\overline{\bg}) \delta \bg = q'(\overline{\bx}, \overline{\bg}) (\delta \bx, \delta \bg)
	\]
	where $\overline{\bx} = \bx(\cdot \, ; \overline{\bg})$ is the solution of \cref{eq:IVP} given $\bg = \overline{\bg}$
	and $\delta \bx = \bx_\bg(\overline{\bg}) \delta \bg$ is the solution of \cref{eq:x-sensitivity}.
\end{theorem}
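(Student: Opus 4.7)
The proof naturally decomposes into two parts, corresponding to the two assertions of the theorem. Both parts reduce to chain rule arguments once the right building blocks are identified.

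For the continuous Fr\'echet differentiability of $q$, my plan is to write
\[
   q(\bx, \bg) = \varphi \bigl( E \bx \bigr) + J \bigl( \BL_2(\bx, \bg) \bigr),
\]
where $E : \WW \to \real^{n_x}$ is the endpoint evaluation $E\bx = \bx(t_f)$ and $J : L^\infty(I) \to \real$ is the integration functional $Jh = \int_{t_0}^{t_f} h(t)\,dt$. Both $E$ and $J$ are bounded linear operators (the former because $W^{1,\infty}(I) \hookrightarrow C(\overline I)$, the latter because $I$ has finite length), and $\varphi$ is continuously differentiable by hypothesis. The remark following \eref{eq:l-superposition} asserts that $\BL_2$ is continuously Fr\'echet differentiable under Assumption~\ref{as:assumption1} applied to $l$, with derivative given by \eref{eq:l-Frechet_deriv}. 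Since sums and compositions of continuously Fr\'echet differentiable maps between Banach spaces are continuously Fr\'echet differentiable, $q$ inherits this property. Applying the chain rule gives
\[
   q'(\bx, \bg)(\delta \bx, \delta \bg)
   = \nabla_x \varphi(\bx(t_f))^T (E \delta \bx) + J \bigl( \BL_2'(\bx, \bg)(\delta \bx, \delta \bg) \bigr),
\]
and substituting \eref{eq:l-Frechet_deriv} reproduces \eref{eq:q-Frechet_deriv}.

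For the second assertion, observe that $\widehat{q} = q \circ \bPhi$, where $\bPhi : \cN(\overline{\bg}) \to \WW \times \cG^2$ is defined by $\bPhi(\bg) = (\bx(\cdot\,;\bg), \bg)$. Theorem~\ref{thm:sensitivity_result} (which applies because Assumption~\ref{as:assumption1} holds for $\bff$) furnishes a neighborhood $\cN(\overline{\bg})$ on which the solution map $\bg \mapsto \bx(\cdot\,;\bg)$ is well-defined and continuously Fr\'echet differentiable, with derivative $\bx_\bg(\overline{\bg}) \delta \bg$ solving \eref{eq:x-sensitivity}. The identity component is trivially continuously Fr\'echet differentiable, so $\bPhi$ is continuously Fr\'echet differentiable at $\overline{\bg}$ with $\bPhi'(\overline{\bg}) \delta \bg = (\bx_\bg(\overline{\bg}) \delta \bg,\, \delta \bg)$. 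Composing with the continuously Fr\'echet differentiable $q$ established in the first part, the chain rule yields
\[
   \widehat{q}_\bg(\overline{\bg}) \delta \bg = q'(\overline{\bx}, \overline{\bg})\bigl( \bx_\bg(\overline{\bg}) \delta \bg,\; \delta \bg \bigr),
\]
which is the formula claimed.

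There is not really a single hard step here: everything is assembled from pieces already proved (Theorem~\ref{thm:F-Frechet_global} via the remark for $\BL_2$; Theorem~\ref{thm:sensitivity_result} for the solution map). The only small care needed is to verify that the range of $\BL_2$ and the endpoint evaluation land in the correct spaces so that the chain rule applies cleanly, and to note that, for the second assertion, one works in the neighborhood $\cN(\overline{\bg})$ provided by the Implicit Function Theorem rather than globally on $\cG^2$.
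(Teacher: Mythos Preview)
Your proposal is correct and follows essentially the same approach as the paper: decompose $q$ into the endpoint term and the integral of $\BL_2$, use boundedness of the endpoint evaluation (via $W^{1,\infty}(I)\hookrightarrow C(\overline I)$) and of the integration functional (finite interval), invoke the remark on $\BL_2$ for the Nemytskii part, and for $\widehat q$ apply the chain rule with Theorem~\ref{thm:sensitivity_result}. Your explicit naming of $E$, $J$, and $\bPhi$ is a bit more structured than the paper's presentation, but the argument is the same.
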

\begin{proof}
    Because $L^\infty(I) \ni h \mapsto \int_{t_0}^{t_f} h(t) \, dt \in \real$ is a bounded linear operator, the continuous Fr\'echet differentiability of
    \[
    	\LL \times \big(\cG^2(I)\big)^{n_g} \ni (\bx, \bg) \mapsto \int_{t_0}^{t_f} l\Big(t, \bx(t), \bg\big(t, \bx(t)\big)\Big) \, dt \in \real
    \]
   follows from the continuous Fr\'echet differentiability of $\BL_2(\bx, \bg)$. 
   Thus, \cref{eq:sensitivity:QoI-xg} is continuously Fr\'echet differentiable on $\WW \times \big(\cG^2(I)\big)^{n_g}$, 
   the integral term because $\WW$ is continuously embedded in $\LL$ and the final time term because $\varphi$ is 
   continuously differentiable and $\WW \ni \bx \mapsto \bx(t_f) \in \real^{n_x}$ is a bounded linear mapping since 
   $W^{1, \infty}(I)$ is continuously embedded in $C(\overline{I})$. 
   The form of \cref{eq:q-Frechet_deriv} follows from \cref{eq:l-Frechet_deriv}. This completes the first part of the proof. 
    The second part then immediately follows from \cref{thm:sensitivity_result}.
\end{proof}

The following theorem uses adjoints to compute the Fr\'echet derivative of $\widetilde{q}$ in \cref{eq:QoI} without solving a sensitivity equation.
\begin{theorem} \label{thm:qoi-sensitivity-g-dependent}
	If the assumptions of \cref{thm:sensitivity_result_qoi} hold, 
    then 
    \begin{equation} \label{eq:sensitivity_result_qoi_adjoint}
    	\widetilde{q}_\bg(\overline{\bg}) \delta \bg = \int_{t_0}^{t_f} \Big[ \overline{\BB}(t)^T \overline{\blambda}(t) + \nabla_g l \Big( t, \overline{\bx}(t), \overline{\bg} \big( t, \overline{\bx}(t) \big) \Big) \Big]^T \delta \bg \big( t, \overline{\bx}(t) \big) \, dt,
    \end{equation}
    where $\overline{\blambda} \in \WW$ solves the adjoint equation
       	\begin{equation} \label{eq:x-adjoint}
	\begin{aligned}
	       - \overline{\blambda}'(t) &=\overline{\BA}(t)^T \overline{\blambda}(t) + \nabla_x l\Big(t, \overline{\bx}(t), \overline{\bg} \big( t, \overline{\bx}(t) \big) \Big) 
	       + \overline{\bg}_x\big(t, \overline{\bx}(t) \big)^T \nabla_g l\Big( t, \overline{\bx}(t), \overline{\bg}\big(t, \overline{\bx}(t) \big) \Big), & \aall t \in I, \\
		\overline{\blambda}(t_f) &= \nabla_x \varphi\big(   \overline{\bx}(t_f) \big).
	\end{aligned}
   \end{equation}
\end{theorem}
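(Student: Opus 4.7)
The plan is the standard adjoint elimination of the sensitivity $\delta \bx$ from the expression for $\widehat{q}_\bg(\overline{\bg})\delta\bg$ given by Theorem~\ref{thm:sensitivity_result_qoi}. First I would verify that the adjoint IVP \eqref{eq:x-adjoint} is well posed: since Assumption~\ref{as:assumption1} (applied to both $\bff$ and $l$) together with $\overline{\bg} \in \cG^2$ implies that $\overline{\BA} \in \big(L^\infty(I)\big)^{n_x \times n_x}$ and that $\nabla_x l(\cdot, \overline{\bx}, \overline{\bg}(\cdot, \overline{\bx})) + \overline{\bg}_x(\cdot, \overline{\bx})^T \nabla_g l(\cdot, \overline{\bx}, \overline{\bg}(\cdot, \overline{\bx})) \in \big(L^\infty(I)\big)^{n_x}$, the linear terminal value problem \eqref{eq:x-adjoint} has a unique solution $\overline{\blambda} \in \big(W^{1,\infty}(I)\big)^{n_x}$.

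Next I would combine the sensitivity ODE \eqref{eq:x-sensitivity} with the adjoint ODE \eqref{eq:x-adjoint} via integration by parts. Take the scalar product of $\delta\bx'$ with $\overline{\blambda}$ and integrate on $I$:
\begin{equation*}
\int_{t_0}^{t_f} \overline{\blambda}(t)^T \delta\bx'(t)\,dt
= \overline{\blambda}(t_f)^T \delta\bx(t_f) - \overline{\blambda}(t_0)^T \delta\bx(t_0)
 - \int_{t_0}^{t_f} \overline{\blambda}'(t)^T \delta\bx(t)\,dt.
\end{equation*}
Using $\delta\bx(t_0) = 0$, the terminal condition $\overline{\blambda}(t_f) = \nabla_x \varphi(\overline{\bx}(t_f))$, and substituting $\overline{\blambda}'$ from \eqref{eq:x-adjoint}, the right-hand side becomes
\begin{equation*}
\nabla_x\varphi(\overline{\bx}(t_f))^T \delta\bx(t_f)
 + \int_{t_0}^{t_f} \Big[\overline{\BA}(t)^T \overline{\blambda}(t)
   + \nabla_x l(\cdot) + \overline{\bg}_x(\cdot)^T \nabla_g l(\cdot) \Big]^T \delta\bx(t)\,dt,
\end{equation*}
with the arguments suppressed for brevity. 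Simultaneously, substituting $\delta \bx'(t) = \overline{\BA}(t)\delta\bx(t) + \overline{\BB}(t)\delta\bg(t,\overline{\bx}(t))$ on the left and using $\overline{\blambda}^T \overline{\BA}\,\delta\bx = (\overline{\BA}^T \overline{\blambda})^T \delta\bx$ makes the $\overline{\BA}$-terms cancel, yielding
\begin{equation*}
\int_{t_0}^{t_f} \overline{\blambda}(t)^T \overline{\BB}(t) \delta\bg(t,\overline{\bx}(t))\,dt
= \nabla_x\varphi(\overline{\bx}(t_f))^T \delta\bx(t_f)
   + \int_{t_0}^{t_f} \big[\nabla_x l + \overline{\bg}_x^T \nabla_g l \big]^T \delta\bx(t)\,dt.
\end{equation*}

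Finally, I would recognize that the right-hand side above is exactly the part of $q'(\overline{\bx}, \overline{\bg})(\delta\bx, \delta\bg)$ in \eqref{eq:q-Frechet_deriv} that depends on $\delta\bx$. Adding the remaining $\delta\bg$-term $\int_{t_0}^{t_f} \nabla_g l(\cdot)^T \delta\bg(t,\overline{\bx}(t))\,dt$ to both sides and invoking Theorem~\ref{thm:sensitivity_result_qoi} to replace the resulting $q'(\overline{\bx},\overline{\bg})(\delta\bx, \delta\bg)$ by $\widehat{q}_\bg(\overline{\bg})\delta\bg$ yields \eqref{eq:sensitivity_result_qoi_adjoint}. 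I do not expect any real obstacle here; the only points requiring care are the regularity of $\overline{\blambda}$ justifying the integration by parts in the $W^{1,\infty}$ setting, and the bookkeeping of transposes when moving $\overline{\BA}$ and $\overline{\bg}_x$ across the inner product, which is purely mechanical.
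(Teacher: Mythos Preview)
Your proposal is correct and follows essentially the same approach as the paper: both use integration by parts on $\overline{\blambda}^T\delta\bx$ to convert the $\delta\bx$-dependent terms of $q'(\overline{\bx},\overline{\bg})(\delta\bx,\delta\bg)$ into the $\overline{\BB}^T\overline{\blambda}$ term, after substituting the sensitivity and adjoint ODEs and cancelling the $\overline{\BA}$-contributions. Your explicit verification of the well-posedness of the adjoint problem is a nice addition the paper omits, but otherwise the arguments are the same.
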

\begin{proof}
If  $\delta \bx \in \WW$ solves \cref{eq:x-sensitivity} and $\overline{\blambda} \in \WW$ solves \cref{eq:x-adjoint}, then
   \begin{align*}
         \nabla_x \varphi\big(   \overline{\bx}(t_f) \big)^T  \delta \bx(t_f)
         &= \overline{\blambda}(t_f)^T \delta \bx(t_f) - \overline{\blambda}(t_0)^T \delta \bx(t_0)
            =  \int_{t_0}^{t_f} \overline{\blambda}'(t)^T \delta \bx(t) + \overline{\blambda}(t)^T \delta \bx'(t) \, dt \\
         & = \int_{t_0}^{t_f} -\Big[ \nabla_x l\Big( t, \overline{\bx}(t), \overline{\bg}\big(t, \overline{\bx}(t) \big) \Big) 
	              + \overline{\bg}_x\big(t, \overline{\bx}(t) \big)^T \nabla_g l\Big( t, \overline{\bx}(t), \overline{\bg}\big(t, \overline{\bx}(t) \big) \Big) \Big]^T \delta \bx(t) \\ &\qquad + \overline{\blambda}(t)^T \overline{\BB}(t) \delta \bg\big( t,  \overline{\bx}(t) \big) \, dt.
   \end{align*}
   Using the previous identity in \cref{eq:q-Frechet_deriv} gives \cref{eq:sensitivity_result_qoi_adjoint}.
\end{proof}

\section{Error Estimates for ODE and QoI} \label{sec:ODE_error}
 Let $\bg_* \in \big(\cG^2(I)\big)^{n_g}$ be the true model and let $\bx_* = \bx(\cdot \, ; \bg_*)$ denote the corresponding solution 
 of  \eqref{eq:IVP} with $\bg = \bg_*$.
 Suppose that we can only access an approximation $\widehat{\bg}$ of $\bg_*$ and therefore
 we can only compute the corresponding solution $\widehat{\bx} = \bx(\cdot \, ; \widehat{\bg})$
 of  \eqref{eq:IVP} with $\bg = \widehat{\bg}$.
 In this section we discuss estimates of the size of the solution
 error $\bx(\cdot \, ; \widehat{\bg}) - \bx(\cdot \, ; \bg_*)$ or of the error $| \widetilde{q}(\widehat{\bg}) - \widetilde{q}(\bg_*) | = \big| q\big(\bx(\cdot \, ; \widehat{\bg}), \widehat{\bg} \big) - q\big(\bx(\cdot \, ; \bg_*), \bg_* \big) \big|$
 in a quantity of interest \eqref{eq:QoI}.

Assume that all possible state trajectories are known to be contained in a domain $\Omega \subset \real^{n_x}$.
This allows to incorporate \emph{a priori} knowledge of the system, but $\Omega = \real^{n_x}$ is possible.
Furthermore, we assume that we have a componentwise error bound 
\begin{equation}    \label{eq:g-error-vec}
	| \widehat{\bg}(t, x) - \bg_*(t, x) | \leq  \bepsilon(t, x), \quad \aall t \in I \; \mbox{and all } x \in \Omega,
\end{equation}
where
$\bepsilon: I \times \Omega \rightarrow \real^{n_g}$ is a function that can be evaluated inexpensively for
almost all $t \in I$ and all $x \in \Omega$. 

Classical ODE perturbation theory provides an estimate for the solution error 
$\bx(\cdot \, ; \widehat{\bg}) - \bx(\cdot \, ; \bg_*)$ given a norm error bound
\begin{equation}    \label{eq:g-error}
	\| \widehat{\bg}(t, x) - \bg_*(t, x) \| \leq  \epsilon(t, x), \quad \aall t \in I \mbox{ and all } x \in \Omega,
\end{equation}
which will be reviewed next. Unfortunately, in many cases, this classical solution error bound
is extremely pessimistic and useless in practice. This has motivated our new
estimates based on sensitivity analysis, which will be presented in
\cref{sec:ODE_sensitivity_sol} and \cref{sec:ODE_sensitivity_qoi}.
In principle, our proposed estimates can also be used when a  norm error bound \eqref{eq:g-error}
is available instead of \eqref{eq:g-error-vec}.


\subsection{ODE Perturbation Theory} \label{sec:ODE_perturbation}
Many texts study the impact of perturbations in the IVP on its solution;
see, e.g.,  \cite[Sec.~I.10]{EHairer_SONorsett_GWanner_1993a}, \cite{GSoderlind_2006a}.
We adapt perturbation results for ODEs to our context. 
Our presentation is motivated by \cite{DWirtz_DCSorensen_BHaasdonk_2014a}.
To directly use the results from these references, we consider \cref{eq:IVP} in the classical 
setting in this section and assume that $\bff$ and $\bg_*$, $\widehat{\bg}$ are at least continuous in
all arguments. 

Let $Q \in \real^{n_x \times n_x}$ be a symmetric positive definite matrix and
consider the weighted inner product $x_1^T Q x_2$ with associated norm
$\| x \|_Q = \sqrt{x^T Q x}$.
The logarithmic Lipschitz constant of the function $$x \mapsto \bff\big(t, x, \bg _*(t,x)\big)$$ with respect to the $Q$-norm
is
\[
       L_Q[t,\bff,\bg_*] 
       = \lim_{h \rightarrow 0+} \frac{1}{h}
                     \left(  \sup_{x, y \in \real^{n_x}, x \not= y} \frac{ \Big\| x - y + h \Big( \bff\big(t, x, \bg _*(t,x)\big) - \bff\big(t, y, \bg _*(t,y)\big) \Big) \Big\|_Q }{ \| x - y \|_Q } - 1 \right).
\]
If $x \mapsto \bff\big(t, x, \bg _*(t,x)\big)$ is Lipschitz continuous for all $t \in I$, then
\begin{equation*}
       L_Q[t,\bff,\bg_*] =   \sup_{x, y \in \real^{n_x}, x \not= y} \frac{ ( x - y )^T Q \Big( \bff\big(t, x, \bg _*(t,x)\big) - \bff\big(t, y, \bg _*(t,y)\big) \Big) }{ \| x - y \|_Q^2 };
\end{equation*}
see \cite[Lemma~2.2]{DWirtz_DCSorensen_BHaasdonk_2014a}.
Following \cite[Def.~2.5]{DWirtz_DCSorensen_BHaasdonk_2014a}, the local logarithmic Lipschitz constant 
(with respect to the $Q$-norm) of the function $x \mapsto \bff\big(t, x, \bg _*(t,x)\big)$ is defined as
\begin{equation}    \label{eq:logarithmic-norm-local}
       L_Q[t,\bff,\bg_*](y) 
       =   \sup_{x \in \real^{n_x}, x \not= y} \frac{ ( x - y )^T Q \Big( \bff\big(t, x, \bg _*(t,x)\big) - \bff\big(t, y, \bg _*(t,y)\big) \Big) }{ \| x - y \|_Q^2 }.
\end{equation}

The next lemma is a variant of Gronwall's lemma.
\begin{lemma}   \label{lem:comparison}
  Let $T > 0$ and let $e, \alpha, \beta: [0, T] \rightarrow \real$ be integrable functions, with $e$ also differentiable.
  If
  \[
        e'(t) \le \beta(t) e(t) + \alpha(t), \qquad  t \in (0,T),
  \]
  then
  \[
         e(t)  \le \int_0^t  \alpha(s) \, \exp\big( \int_s^t  \beta(\tau)  d\tau \big) ds  +   \exp\big( \int_0^t  \beta(\tau)  d\tau \big) ds  \; e(0), \qquad t \in [0, T].
  \]
\end{lemma}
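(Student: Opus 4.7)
The plan is to use the standard integrating factor argument: multiply through by a positive factor that turns the left-hand side of the differential inequality into a perfect derivative, then integrate.

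First I would introduce the integrating factor $\mu(t) := \exp\bigl(-\int_0^t \beta(\tau)\, d\tau\bigr)$, which is well-defined since $\beta$ is integrable, and strictly positive. Multiplying the hypothesis $e'(t) \le \beta(t) e(t) + \alpha(t)$ by $\mu(t) > 0$ and rearranging gives
\[
\mu(t) e'(t) - \beta(t)\mu(t) e(t) \le \mu(t)\alpha(t), \qquad t \in (0,T).
\]
Since $\mu'(t) = -\beta(t)\mu(t)$ almost everywhere (with $\mu$ absolutely continuous), the left-hand side equals $\bigl(\mu(t) e(t)\bigr)'$, so
\[
\bigl(\mu(t) e(t)\bigr)' \le \mu(t)\alpha(t), \qquad t \in (0,T).
\]

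Next I would integrate this inequality from $0$ to $t$, using $\mu(0) = 1$, to obtain
\[
\mu(t) e(t) - e(0) \le \int_0^t \mu(s)\alpha(s)\, ds.
\]
Dividing by $\mu(t) > 0$ and noting that $1/\mu(t) = \exp\bigl(\int_0^t \beta(\tau)\, d\tau\bigr)$ and $\mu(s)/\mu(t) = \exp\bigl(\int_s^t \beta(\tau)\, d\tau\bigr)$ yields the claimed bound.

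The argument is essentially routine; the only mildly delicate point is justifying the product-rule computation under the stated regularity (integrable $\beta$ and differentiable $e$), which is handled by the absolute continuity of $\mu$ and standard properties of the Lebesgue integral. No step presents a real obstacle, and the conclusion holds for all $t \in [0,T]$ by continuity of both sides.
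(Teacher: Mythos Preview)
Your argument is correct and is the standard integrating-factor proof of this Gronwall-type inequality. The paper itself does not supply a proof of this lemma; it simply cites \cite[Lemma~2.6]{DWirtz_DCSorensen_BHaasdonk_2014a}, so there is no in-paper proof to compare against, but your approach is precisely the classical one that such a citation points to.
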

For a proof see, e.g., \cite[Lemma~2.6]{DWirtz_DCSorensen_BHaasdonk_2014a}.

\cref{lem:comparison} may be used to obtain a bound for the error  $\| \widehat{\bx}(t) - \bx_*(t) \|_Q$ 
involving the local logarithmic Lipschitz norm \cref{eq:logarithmic-norm-local} evaluated 
along the nominal trajectory $\widehat{\bx}$.

\begin{theorem}   \label{thm:error-estimate-comparison-lemma}
     Let   $L_Q[t,\bff,\bg_*]\big(  \widehat{\bx}(t) \big)$ be the local logarithmic Lipschitz constant  \cref{eq:logarithmic-norm-local}
     of $x \mapsto \bff\big(t, x, \bg_*(t, x)\big)$ at $\widehat{\bx}(t)$. Define
     $R_{L^\infty} := \| \widehat{\bx} \|_{(L^\infty(I))^{n_x}}$,
     $R_{\cG} := \| \widehat{\bg} \|_{(\cG^0(I))^{n_g}} + \| \bg_* \|_{(\cG^0(I))^{n_g}}$,
     and assume there exists $L > 0$ such that
     \begin{equation} \label{eq:uniformly-lipschitz}
     	\| \bff(t, x, g_1) -  \bff(t, x, g_2) \|_Q \le L \| g_1 - g_2 \|_Q \quad \textrm{for all $t \in I$, $x \in \cB_{R_{L^\infty}}(0)$, $g_1, g_2 \in \cB_{R_{\cG}}(0)$.}
     \end{equation}
     If the error bound \cref{eq:g-error} holds in the $Q$-norm, and if $t \mapsto L_Q[t,\bff,\bg_*]\big(  \widehat{\bx}(t) \big)$
     and $t \mapsto \epsilon\big(t, \widehat{\bx}(t) \big)$ are integrable,  then the following Gronwall-type error bound holds:
     \begin{equation}   \label{eq:error-estimate-comparison-lemma}
          \big\| \widehat{\bx}(t) - \bx_*(t) \big\|_Q
          \le L   \int_0^t    \epsilon\big(s, \widehat{\bx}(s) \big)   \, 
                             \exp\big( \int_s^t  L_Q[\tau, \bff,\bg_*]\big(  \widehat{\bx}(\tau) \big)  d\tau \big) \, ds =: \BE(t).
     \end{equation}
\end{theorem}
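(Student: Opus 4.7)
The plan is to derive a pointwise differential inequality for $e(t) := \| \bx_\epsilon(t) - \bx_*(t) \|_Q$ on $\overline{I}$ and then invoke the Gronwall-type Lemma~\ref{lem:comparison}. Since $\bff$, $\bg_*$, $\bg_\epsilon$ are assumed continuous in this section, the two solutions $\bx_*, \bx_\epsilon$ are $C^1$ on $\overline{I}$, so $e(t)^2 = (\bx_\epsilon(t) - \bx_*(t))^T Q (\bx_\epsilon(t) - \bx_*(t))$ is continuously differentiable with $e(t_0) = 0$.

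First I would split the difference of the two right-hand sides through an intermediate term,
\begin{align*}
\bx_\epsilon'(t) - \bx_*'(t)
&= \Big[ \bff\big(t, \bx_\epsilon(t), \bg_\epsilon(t, \bx_\epsilon(t)) \big) - \bff\big(t, \bx_\epsilon(t), \bg_*(t, \bx_\epsilon(t)) \big) \Big] \\
&\quad + \Big[ \bff\big(t, \bx_\epsilon(t), \bg_*(t, \bx_\epsilon(t)) \big) - \bff\big(t, \bx_*(t), \bg_*(t, \bx_*(t)) \big) \Big].
\end{align*}
Taking the $Q$-inner product with $\bx_\epsilon(t) - \bx_*(t)$, applying Cauchy--Schwarz to the first bracket together with the uniform Lipschitz bound \eref{eq:uniformly-lipschitz} and the error bound \eref{eq:g-error}, and applying the definition \eref{eq:logarithmic-norm-local} of the local logarithmic Lipschitz constant to the second bracket would give the quadratic inequality
\begin{equation*}
\tfrac{1}{2} \tfrac{d}{dt} e(t)^2 \;\le\; L_Q[t, \bff, \bg_*]\big( \bx_\epsilon(t) \big) \, e(t)^2 + L \, \epsilon\big( t, \bx_\epsilon(t) \big) \, e(t).
\end{equation*}

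To rewrite this as a linear differential inequality for $e$ itself, which is the form required by Lemma~\ref{lem:comparison}, I would regularize via $e_\delta(t) := \sqrt{e(t)^2 + \delta^2}$ for $\delta > 0$. Then $e_\delta > 0$ is $C^1$, and since $e(t) \le e_\delta(t)$ and $e(t)^2 \le e_\delta(t)^2$, the chain rule $e_\delta'(t) = \tfrac{1}{2 e_\delta(t)} \tfrac{d}{dt} e(t)^2$ combined with the previous quadratic bound yields
\begin{equation*}
e_\delta'(t) \;\le\; L_Q[t, \bff, \bg_*]\big( \bx_\epsilon(t) \big) \, e_\delta(t) + L \, \epsilon\big( t, \bx_\epsilon(t) \big).
\end{equation*}
By the integrability hypotheses on $L_Q[t,\bff,\bg_*](\bx_\epsilon(t))$ and $\epsilon(t, \bx_\epsilon(t))$, Lemma~\ref{lem:comparison} applies with $\beta(t) = L_Q[t,\bff,\bg_*](\bx_\epsilon(t))$ and $\alpha(t) = L \, \epsilon(t, \bx_\epsilon(t))$, and since $e_\delta(t_0) = \delta$ the resulting estimate contains only one additional term $\delta \exp\!\big( \int_{t_0}^t L_Q[\tau,\bff,\bg_*](\bx_\epsilon(\tau)) \, d\tau \big)$, which vanishes as $\delta \to 0+$. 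The monotone convergence $e_\delta \searrow e$ then yields \eref{eq:error-estimate-comparison-lemma}.

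The main obstacle I anticipate is the square-root regularization step: the pointwise inequality for $\tfrac{1}{2}(e^2)'$ holds everywhere that the right-hand side of the IVP is evaluated pointwise (i.e., everywhere if classical continuity is assumed, or almost everywhere in the Carath\'eodory setting), and one must verify that the resulting majorant for $e_\delta'$ is integrable uniformly in $\delta$ so that passing $\delta \to 0$ inside Lemma~\ref{lem:comparison} is valid. Once this is in place, the remaining manipulations are routine uses of the definitions and the standard Gronwall comparison argument.
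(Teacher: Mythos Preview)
Your proposal is correct and follows essentially the same route as the paper: the same splitting through the intermediate point $\bff\big(t,\bx_\epsilon(t),\bg_*(t,\bx_\epsilon(t))\big)$, the same use of the local logarithmic Lipschitz constant for one bracket and the Lipschitz bound \eref{eq:uniformly-lipschitz} with \eref{eq:g-error} for the other, and the same final appeal to Lemma~\ref{lem:comparison}. The paper simply divides $(\bx_*-\bx_\epsilon)^T Q(\bx_*'-\bx_\epsilon')$ by $\|\bx_*-\bx_\epsilon\|_Q$ without worrying about vanishing of the norm; your $e_\delta$-regularization is an added layer of care that the paper omits.

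One small point in your regularization: passing from $\tfrac12(e^2)' \le L_Q\,e^2 + L\,\epsilon\,e$ to $e_\delta' \le L_Q\,e_\delta + L\,\epsilon$ uses $L_Q\,\tfrac{e^2}{e_\delta} \le L_Q\,e_\delta$, which needs $L_Q \ge 0$. When $L_Q<0$, write $\tfrac{e^2}{e_\delta} = e_\delta - \tfrac{\delta^2}{e_\delta}$ and use $\tfrac{\delta^2}{e_\delta}\le\delta$ to get $e_\delta' \le L_Q\,e_\delta + |L_Q|\,\delta + L\,\epsilon$; the extra $|L_Q|\,\delta$ term is integrable and also vanishes as $\delta\to 0^+$.
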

\begin{proof}
    Since $\bx_*$ and $\widehat{\bx}$ are the solutions of \cref{eq:IVP} with $\bg = \bg_*$ and $\bg = \widehat{\bg}$,
    respectively,
    \begin{align*}
       & \Big( \bx_*(t) - \widehat{\bx}(t) \Big)^T Q  \Big(  \bx_*'(t) - \widehat{\bx}'(t) \Big)  \\
         &=  \Big( \bx_*(t) - \widehat{\bx}(t) \Big)^T Q
               \Bigg( \bff\Big( t, \bx_*(t), \bg_*\big( t, \bx_*(t) \big) \Big) -  \bff\Big( t, \widehat{\bx}(t), \bg_*\big( t, \widehat{\bx}(t) \big) \Big) \Bigg)  \\
            & \quad  + \Big( \bx_*(t) - \widehat{\bx}(t) \Big)^T Q
                   \Bigg( \bff\Big( t, \widehat{\bx}(t), \bg_*\big( t, \widehat{\bx}(t) \big) \Big) -  \bff\Big( t, \widehat{\bx}(t), \widehat{\bg}\big(t, \widehat{\bx}(t) \big) \Big) \Bigg) \\
       &\le  L_Q[t, \bff,\bg_*]\big(  \widehat{\bx}(t) \big)  \big\| \bx_*(t) - \widehat{\bx}(t) \big\|_Q^2 
                 + L   \Big\| \bg_*\big( t, \widehat{\bx}(t) \big)  -  \widehat{\bg}\big(\widehat{\bx}(t) \big) \Big\|_Q \big\| \bx_*(t) - \widehat{\bx}(t) \big\|_Q \\
      &\le  L_Q[t, \bff,\bg_*]\big(  \widehat{\bx}(t) \big)  \big\| \bx_*(t) - \widehat{\bx}(t) \big\|_Q^2 
                 + L    \epsilon\big( t, \widehat{\bx}(t) \big)  \big\| \bx_*(t) - \widehat{\bx}(t) \big\|_Q.
    \end{align*}
    Hence
    \begin{align*}           
         \frac{d}{dt}  \big\| \bx_*(t) - \widehat{\bx}(t) \big\|_Q
        &= \Big( \bx_*(t) - \widehat{\bx}(t) \Big)^T Q  \Big(  \bx_*'(t) - \widehat{\bx}'(t) \Big)   \; \big/ \; \big\| \bx_*(t) - \widehat{\bx}(t) \big\|_Q \\
        &\le L_Q[t,\bff,\bg_*]\big(  \widehat{\bx}(t) \big)  \big\| \bx_*(t) - \widehat{\bx}(t) \big\|_Q
                 + L    \epsilon\big( t, \widehat{\bx}(t) \big). 
    \end{align*}
    Using \cref{lem:comparison} gives the desired result. 
 \end{proof}

The local logarithmic Lipschitz constant \cref{eq:logarithmic-norm-local} is difficult to
compute. If $\bff$ is Lipschitz continuously differentiable in $x$ and $g$ and
$\bg$ is Lipschitz continuously differentiable, then we can use the Taylor expansion of 
$x \mapsto  \bff\big(t, x, \bg _*(t,x)\big)$ at $y$ to write
\begin{equation*}
 \frac{ ( x - y )^T Q \Big( \bff\big(t, x, \bg _*(t,x)\big) - \bff\big(t, y, \bg _*(t,y)\big) \Big) }{ \| x - y \|_Q^2 }
 = \frac{ ( x - y )^T Q \BA_*(t,y)  ( x - y )  }{ \| x - y \|_Q^2 }
                                    + O(\| x - y \|_Q ),
\end{equation*}
where
\[
            \BA_*(t,y) =  \bff_x\Big( t, y, \bg_*\big( t, y \big) \Big)  + \bff_g\Big( t, y, \bg_*\big( t, y \big) \Big) (\bg_*)_x\big( t, y \big).
\]
Omitting the $O(\| x - y \|_Q )$ term we can approximate
\begin{subequations}    \label{eq:logarithmic-norm-local-deriv}
\begin{equation}
       L_Q[t,\bff,\bg_*](y) 
       \approx 
        \widetilde{L}_Q[t,\bff,\bg_*](y) 
       =  \sup_{v \in \real^{n_x}, v \not= 0} \frac{ v^T Q \BA_*(t,y)  v  }{ \| v\|_Q^2 },
\end{equation}
which is the logarithmic norm of the matrix $\BA_*(t,y)$, and which can be computed via
\begin{align}    
       \widetilde{L}_Q[t,\bff,\bg_*](y) 
       =   \max \sigma  \Big( \half \big( C^T \BA_*(t,y)  C^{-T} + C^{-1} \BA_*(t,y)^T  C  \big) \Big),
\end{align}
\end{subequations}
where $Q = C C^T$ is the Cholesky decomposition of $Q$ and 
$\sigma(M)$ denotes the spectrum of the matrix $M$;
see, e.g., \cite[Corollary~2.3]{DWirtz_DCSorensen_BHaasdonk_2014a}.
The approximation \cref{eq:logarithmic-norm-local-deriv} still depends on the unknown $\bg_*$, 
and one can replace $\bg_*$ by $\widehat{\bg}$ to arrive at a computable quantity; however,
in the numerical example in \cref{sec:numerics_flap}, we have access to $\bg_*$ and use \cref{eq:logarithmic-norm-local-deriv}.
Unfortunately, as we will see in \cref{sec:numerics_flap}, the error bound \cref{eq:error-estimate-comparison-lemma} approximated using
\cref{eq:logarithmic-norm-local-deriv} can be extremely pessimistic. This motivates the need for our sensitivity-based error bounds, which will be introduced next.


\subsection{Sensitivity-Based Error Estimation for ODE Solution}  \label{sec:ODE_sensitivity_sol}
 We will use sensitivities to estimate the error 
 \begin{equation}   \label{eq:x+-err-size}
       \| \widehat{\bx} - \bx_* \|_\BQ^2 := \int_{t_0}^{t_f} \big( \bx(t;\widehat{\bg}) - \bx(t;\bg_*) \big)^T \BQ(t) \big( \bx(t;\widehat{\bg}) - \bx(t;\bg_*) \big) \, dt
\end{equation}
with a user-specified matrix-valued function $\BQ \in \big(L^\infty(I)\big)^{n_x \times n_x}$ such that for almost all $t \in I$ the matrix $\BQ(t)$ is symmetric positive semidefinite, with corresponding (semi)norm $\| x \|_{\BQ(t)} = \sqrt{x^T \BQ(t) x}$.

Recall from Theorem \ref{thm:sensitivity_result} that under Assumption \ref{as:assumption1}, the solution $\bx(\cdot \, ; \bg) \in \WW$ 
of \eqref{eq:IVP} is continuously Fr\'echet differentiable with respect to $\bg \in \big(\cG^2(I)\big)^{n_g}$.
We approximate
\begin{equation}   \label{eq:x-err-sensitivity}
	\bx(\widehat{\bg}) - \bx(\bg_*) 
	=  \bx_\bg(\widehat{\bg})  (\widehat{\bg} - \bg_*)
	        + \int_0^1 \big[\bx_\bg(\bg_*  + s (\widehat{\bg} - \bg_*)) - \bx_\bg(\widehat{\bg})\big] (\widehat{\bg} - \bg_*) \, ds
	\approx \bx_\bg(\widehat{\bg})  (\widehat{\bg} - \bg_*).
\end{equation}

If we knew  $\delta \bg_* := \widehat{\bg} - \bg_*$, then $ \bx_\bg(\widehat{\bg})  \delta \bg_*$ could be computed as
the solution of \eqref{eq:x-sensitivity} with the current  $\bg = \widehat{\bg}$ and  $\delta \bg = \delta \bg_*$.
However, the sensitivity equations \eqref{eq:x-sensitivity} require $\delta \bg_*\big(t, \widehat{\bx}(t)\big)$, $t \in I$, which is expensive,
but from the componentwise error bound \eqref{eq:g-error-vec} we
know that 
\begin{equation}    \label{eq:g-error-vec-delta}
	\big| \delta \bg_*( t, \widehat{\bx}(t) ) \big| \leq  \bepsilon\big(t, \widehat{\bx}(t)\big), \qquad \aall t \in I.
\end{equation}
We assume that $t \mapsto \bepsilon\big( t, \widehat{\bx}(t) \big) \in \big(L^\infty(I) \big)^{n_g}$.
Motivated by the error estimate \eqref{eq:x-err-sensitivity}, the 
sensitivity equation \eqref{eq:x-sensitivity} with the current  $\bg = \widehat{\bg}$ and  $\delta \bg = \delta \bg_*$,
and the model error bound \eqref{eq:g-error-vec-delta}, we consider the following optimization problem to obtain an approximate upper bound on the error measure \eqref{eq:x+-err-size}:
\begin{equation} \label{eq:Refinement:ODE:LQOCP0}
\begin{aligned}
	\sup_{\delta \bx, \delta \bg} \quad &\frac{1}{2} \int_{t_0}^{t_f} \| \delta \bx(t) \|_{\BQ(t)}^2 \, dt \\
	\mbox{s.t.} \quad & \delta \bx'(t) = \widehat{\BA}(t)  \delta \bx(t) +  \widehat{\BB}(t) \delta \bg\big( t, \widehat{\bx}(t) \big), & \qquad \aall t \in I, \\
	& \delta \bx(t_0) = 0, \\
	& - \bepsilon\big( t, \widehat{\bx}(t) \big) \leq \delta \bg\big( t, \widehat{\bx}(t) \big) \leq \bepsilon\big( t, \widehat{\bx}(t) \big), & \qquad \aall t \in I,
\end{aligned}
\end{equation}
where $\widehat{\BA}, \widehat{\BB}$ are given by \eqref{eq:shorthand} with $\bx, \bg$ replaced by $\widehat{\bx}, \widehat{\bg}$ respectively.

The idea behind \eqref{eq:Refinement:ODE:LQOCP0} is that we consider all possible model perturbations that obey the pointwise error bound along the nominal trajectory and use the sensitivity equation to determine the worst-case perturbation in the corresponding ODE solution.
The composition $\delta \bg \big(\cdot, \widehat{\bx}(\cdot) \big)$ can be replaced
by a function $\bdelta \in \big( L^\infty(I) \big)^{n_g}$, yielding the linear quadratic optimal control problem
\begin{subequations} \label{eq:Refinement:ODE:LQOCP}
\begin{align}
	\sup_{\delta \bx, \bdelta} \quad &\frac{1}{2} \int_{t_0}^{t_f} \| \delta \bx(t) \|_{\BQ(t)}^2 \, dt \\
	\mbox{s.t.} \quad & \delta \bx'(t) = \widehat{\BA}(t)  \delta \bx(t) +  \widehat{\BB}(t)   \bdelta(t), & \qquad \aall t \in I, \\
	& \delta \bx(t_0) = 0, \\
	& - \bepsilon\big( t, \widehat{\bx}(t) \big) \leq \bdelta(t) \leq \bepsilon\big( t, \widehat{\bx}(t) \big), & \qquad \aall t \in I. \label{eq:ODE_sensitivity_sol:box-constraints}
\end{align}
\end{subequations}

\begin{theorem}  \label{th:ODE:LQOCP0=LQOCP}
    The problems \eqref{eq:Refinement:ODE:LQOCP0} and  \eqref{eq:Refinement:ODE:LQOCP} are equivalent.
\end{theorem}
\begin{proof}
     Since the objective function depends only on $\delta \bx \in \WW$, it is sufficient to show that
     every feasible point of \eqref{eq:Refinement:ODE:LQOCP0} corresponds to a  feasible point of \eqref{eq:Refinement:ODE:LQOCP} and vice versa.
     If the pair $\delta \bx \in \WW$, $\delta \bg \in \big(\cG^2(I)\big)^{n_g}$ is feasible for  \eqref{eq:Refinement:ODE:LQOCP0},
     then define $\bdelta(t) := \delta \bg\big( t, \widehat{\bx}(t) \big)$. The pair 
     $\delta \bx \in \WW$, $ \bdelta \in  \big( L^\infty(I) \big)^{n_g}$ is feasible for  \eqref{eq:Refinement:ODE:LQOCP}.
     On the other hand, if the pair $\delta \bx \in \WW$, $ \bdelta \in  \big( L^\infty(I) \big)^{n_g}$ is feasible for  \eqref{eq:Refinement:ODE:LQOCP},
     then define  $\delta \bg( t, x ) :=  \bdelta(t)$. The pair  $\delta \bx \in \WW$, $\delta \bg \in \big(\cG^2(I)\big)^{n_g}$ is feasible for  
     \eqref{eq:Refinement:ODE:LQOCP0}.
\end{proof}

The problem \eqref{eq:Refinement:ODE:LQOCP} is a convex linear quadratic optimal control problem,
but we seek a {\em supremum} rather than an infimum; therefore, standard techniques for establishing existence of solutions cannot be applied here.
Moreover, if a solution exists, it is not unique; for instance, if $\bdelta$ solves \eqref{eq:Refinement:ODE:LQOCP}, then $- \bdelta$ does as well. 
After discretizing \eqref{eq:Refinement:ODE:LQOCP}, the resulting finite-dimensional linearly constrained quadratic program (LCQP)  has a solution.
However, the finite-dimensional LCQP is NP-hard, 
as it is a convex maximization problem and therefore has optimal solutions at the vertices of the feasible polyhedron, the number of which grows exponentially 
with the problem dimension. See, e.g., \cite{HPBenson_1995a},
\cite{SBurer_ANLetchford_2009a}, \cite{RHorst_PMPardalos_NVThoai_2000a}.
Despite these issues, for discretizations of \eqref{eq:Refinement:ODE:LQOCP},
we can often compute an approximate discretized state-control pair $(\delta \bx, \bdelta)$ whose objective value is close
to the maximum
using a tailored interior point method that exploits the symmetry of the discretized box constraints \eqref{eq:ODE_sensitivity_sol:box-constraints}.

Because we assume $\widehat{\bx}$ has already been computed and the error bound function $\bepsilon: I \times \Omega \rightarrow \real^{n_g}$ 
can be evaluated inexpensively for almost all $t \in I$ and all $x \in \Omega$,
the linear quadratic optimal control problem \eqref{eq:Refinement:ODE:LQOCP} can be set up inexpensively.

If an optimal solution to \eqref{eq:Refinement:ODE:LQOCP} exists, then the optimal objective function value
is an upper bound of the size of the error estimate 
$\bx_\bg(\widehat{\bg})  (\widehat{\bg} - \bg_*)$.
\begin{theorem}   \label{thm:state-error-bound}
     If $\bdelta \in \big(L^\infty(I) \big)^{n_g}$ and $\delta \bx \in \WW$ solve \eqref{eq:Refinement:ODE:LQOCP},
     then 
     \[
         \frac{1}{2} \int_{t_0}^{t_f}   \big\| [\bx_\bg(\widehat{\bg})  (\widehat{\bg} - \bg_*)](t)  \big\|_{\BQ(t)}^2 \, dt 
         \le  \frac{1}{2} \int_{t_0}^{t_f}  \big\| \delta \bx(t) \big\|_{\BQ(t)}^2 \, dt. 
     \]
\end{theorem}
\begin{proof}
    Because of the model error bound \eqref{eq:g-error-vec-delta}, 
    $\bdelta(t) := \delta \bg_*\big(t, \widehat{\bx}(t)\big) = \widehat{\bg}\big(t, \widehat{\bx}(t)\big) - \bg_*\big(t, \widehat{\bx}(t)\big)$
    satisfies \eqref{eq:ODE_sensitivity_sol:box-constraints}, and 
    $\bx_\bg(\widehat{\bg}) \delta \bg_* \in \WW$ 
    is a corresponding  feasible state  for  \eqref{eq:Refinement:ODE:LQOCP}.
    Thus, their objective function value is less than or equal to the optimal objective function value,
    which is the desired inequality.
\end{proof}
In cases where \eqref{eq:Refinement:ODE:LQOCP} has no solution, we may at best obtain an approximate upper bound by taking a sufficiently fine discretization of \eqref{eq:Refinement:ODE:LQOCP} and solving the resulting LCQP. 
In the numerical examples shown in Section~\ref{sec:numerics_flap} and in the supplement, we still obtain excellent estimates for the solution error despite this theoretical shortcoming.

\begin{remark}   \label{rem:Refinement:ODE:control-constraints-norm}
     If,  instead of the   componentwise error bound \eqref{eq:g-error-vec}, we have
     a norm error bound \eqref{eq:g-error}, then the control constraints \eqref{eq:ODE_sensitivity_sol:box-constraints}
     have to be replaced by
     \begin{equation} \label{eq:Refinement:ODE:control-constraints-norm}
            \|   \bdelta( t )  \|  \leq  \epsilon\big( t, \widehat{\bx}(t) \big), \quad \aall t \in I.
     \end{equation}
     Theorem~\ref{thm:state-error-bound} remains valid after this change of control constraints.
     However, depending on the choice of norm in \eqref{eq:Refinement:ODE:control-constraints-norm}, the
     resulting optimal control problem may be more difficult to solve than \eqref{eq:Refinement:ODE:LQOCP},
     which is why we have focused on componentwise error bounds \eqref{eq:g-error-vec}.
\end{remark}


\subsection{Sensitivity-Based Error Estimation for Quantity of Interest}   \label{sec:ODE_sensitivity_qoi}
In the previous two subsections the goal was to analyze the solution error $\bx(\cdot \, ; \widehat{\bg}) - \bx(\cdot \, ; \bg_*)$.
Often, however, we are interested in a quantity of interest \eqref{eq:QoI} and want to analyze
\begin{equation} \label{eq:QoI-error-measure}
| \widetilde{q}(\widehat{\bg}) - \widetilde{q}(\bg_*) | = \big| q\big(\bx(\cdot \, ; \widehat{\bg}), \widehat{\bg} \big) - q\big(\bx(\cdot \, ; \bg_*), \bg_*\big) \big|.
\end{equation}
We proceed as in the previous section.

Under the assumptions of \cref{thm:sensitivity_result_qoi} the quantity of interest \eqref{eq:QoI} is continuously
Fr\'echet differentiable with respect to $\bg \in \big(\cG^2(I)\big)^{n_g}$. We approximate
\begin{equation*}
	| \widetilde{q}(\widehat{\bg}) - \widetilde{q}(\bg_*) | 
	\approx  | \widetilde{q}_\bg(\widehat{\bg})   (\widehat{\bg} - \bg_*) |.
\end{equation*}
If we knew  $\delta \bg_* = \widehat{\bg} - \bg_*$, then $\widetilde{q}_\bg(\widehat{\bg})   \delta \bg_*$ could be computed 
using the adjoint equation approach based on information at the 
already computed $\widehat{\bx}$. 
Specifically, if the matrices $\widehat{\BA}(t)$, $\widehat{\BB}(t)$ are given by \eqref{eq:shorthand} with
$\bx$, $\bg$ replaced by $\widehat{\bx}$, $\widehat{\bg}$, 
it follows from \cref{eq:sensitivity_result_qoi_adjoint} that
\begin{subequations} \label{eq:sensitivity_result_qoi_adjoint-c}
 \begin{equation} 
         |    \widetilde{q}_\bg(\widehat{\bg}) \delta \bg_* | 
            = \Big|   \int_{t_0}^{t_f}  \Big[  \widehat{\BB}(t)^T \widehat{\blambda}(t) + \nabla_g l \Big( t, \widehat{\bx}(t), \widehat{\bg} \big( t, \widehat{\bx}(t) \big) \Big)  \Big]^T \delta \bg_*\big( t, \widehat{\bx}(t) \big) \, dt \, \Big|
 \end{equation}
where  $\widehat{\blambda} \in \WW$ solves the adjoint equation
 \begin{equation}    \label{eq:x-adjoint-c}
	\begin{aligned}
	       - \widehat{\blambda}'(t) &=  \widehat{\BA}(t)^T \widehat{\blambda}(t) + \nabla_x l\big(t, \widehat{\bx}(t)\big) + \widehat{\bg}_x \big( t, \widehat{\bx}(t) \big)^T \nabla_g l\Big(t, \widehat{\bx}(t), \widehat{\bg} \big( t, \widehat{\bx}(t) \big) \Big), & \aall t \in I, \\
		\widehat{\blambda}(t_f) &= \nabla_x \varphi\big( \widehat{\bx}(t_f) \big).
	\end{aligned}
\end{equation}
\end{subequations}

Similar to the approach in the previous section, we use the adjoint-based sensitivity result \eqref{eq:sensitivity_result_qoi_adjoint-c} and the model error bound \eqref{eq:g-error-vec-delta} to motivate the following problem to compute an approximate upper bound for the QoI error \eqref{eq:QoI-error-measure}:
\begin{subequations}    \label{eq:Refinement:ODE:QoI-LP0}
\begin{align}
	\max_{\delta \bg} \quad & \Big|   \int_{t_0}^{t_f}  \Big[  \widehat{\BB}(t)^T \widehat{\blambda}(t) + \nabla_g l \Big( t, \widehat{\bx}(t), \widehat{\bg} \big( t, \widehat{\bx}(t) \big) \Big)  \Big]^T \delta \bg\big( t, \widehat{\bx}(t) \big) \, dt \, \Big|   \\
	\mbox{s.t.} \quad &  - \bepsilon\big( t, \widehat{\bx}(t) \big) \leq \delta \bg \big(t, \widehat{\bx}(t) \big) \leq \bepsilon\big( t, \widehat{\bx}(t) \big), & \qquad \aall t \in I.
\end{align}
\end{subequations}
Next, we replace $\delta \bg\big( \cdot, \widehat{\bx}(\cdot) \big) \in \big(L^\infty(I) \big)^{n_g}$ by $\bdelta \in \big(L^\infty(I) \big)^{n_g}$ to get
\begin{subequations} \label{eq:Refinement:ODE:QoI-LP}
\begin{align}
	\max_{\bdelta } \quad & \Big| \int_{t_0}^{t_f}  \Big[  \widehat{\BB}(t)^T \widehat{\blambda}(t) + \nabla_g l \Big( t, \widehat{\bx}(t), \widehat{\bg} \big( t, \widehat{\bx}(t) \big) \Big)  \Big]^T \bdelta(t) \, dt \, \Big| \label{eq:ODE_sensitivity_qoi:objective}  \\
	\mbox{s.t.} \quad &  - \bepsilon\big( t, \widehat{\bx}(t) \big) \leq \bdelta(t) \leq \bepsilon\big( t, \widehat{\bx}(t) \big), & \qquad \aall t \in I. \label{eq:ODE_sensitivity_qoi:box-constraints}
\end{align}
\end{subequations}
The problem \eqref{eq:Refinement:ODE:QoI-LP} is a simple linear program in $\bdelta \in \big(L^\infty(I) \big)^{n_g}$ that has an easily computable analytical solution.

\begin{theorem}   \label{th:Refinement:ODE:QoI-LP-sol}
    The problems \eqref{eq:Refinement:ODE:QoI-LP0} and  \eqref{eq:Refinement:ODE:QoI-LP} are equivalent.

     The linear program \eqref{eq:Refinement:ODE:QoI-LP} is solved by the functions $\pm \widehat{\bdelta}$, where $\widehat{\bdelta}$ is defined componentwise by
     \begin{equation}   \label{eq:Refinement:ODE:QoI-LP-sol}
           \widehat{\bdelta}_i(t)   
           \begin{cases} 
                       = \bepsilon_i\big( t, \widehat{\bx}(t) \big)  & \mbox{if }  \Big[\widehat{\BB}(t)^T \widehat{\blambda}(t) + \nabla_g l \Big( t, \widehat{\bx}(t), \widehat{\bg} \big( t, \widehat{\bx}(t) \big) \Big) \Big]_i > 0, \\[1ex]
                        = -\bepsilon_i\big( t, \widehat{\bx}(t) \big)  & \mbox{if }  \Big[\widehat{\BB}(t)^T \widehat{\blambda}(t) + \nabla_g l \Big( t, \widehat{\bx}(t), \widehat{\bg} \big( t, \widehat{\bx}(t) \big) \Big) \Big]_i < 0, \\[1ex]
                         \in \big[  - \bepsilon_i\big( t, \widehat{\bx}(t) \big) , \, \bepsilon_i\big( t, \widehat{\bx}(t) \big) \big] &  \mbox{if }  \Big[\widehat{\BB}(t)^T \widehat{\blambda}(t) + \nabla_g l \Big( t, \widehat{\bx}(t), \widehat{\bg} \big( t, \widehat{\bx}(t) \big) \Big) \Big]_i  = 0,
           \end{cases}
     \end{equation}
     for all $i = 1, \dots, n_g$.
The optimal objective function value is
     \begin{equation}   \label{eq:Refinement:ODE:QoI-LP-sol-obj}
\int_{t_0}^{t_f}  \Big|  \widehat{\BB}(t)^T \widehat{\blambda}(t) + \nabla_g l \Big( t, \widehat{\bx}(t), \widehat{\bg} \big( t, \widehat{\bx}(t) \big) \Big)  \Big|^T   \bepsilon\big( t, \widehat{\bx}(t) \big) \,  dt,
     \end{equation}
     where the absolute value is applied componentwise.
\end{theorem}
\begin{proof}
    The equivalence of  \eqref{eq:Refinement:ODE:QoI-LP0} and  \eqref{eq:Refinement:ODE:QoI-LP} can be shown as in the proof
    of \cref{th:ODE:LQOCP0=LQOCP}.
    
    The function $\bdelta$ satisfies \eqref{eq:ODE_sensitivity_qoi:box-constraints} if and only if $-\bdelta$ satisfies
     \eqref{eq:ODE_sensitivity_qoi:box-constraints},
    and $\pm \bdelta$  have the same objective function values.
    Thus,  $\bdelta$  solves \eqref{eq:Refinement:ODE:QoI-LP} if and only
    if $-\bdelta$ solves \eqref{eq:Refinement:ODE:QoI-LP}, and we can solve \eqref{eq:Refinement:ODE:QoI-LP} without the absolute value in the objective
    function \eqref{eq:ODE_sensitivity_qoi:objective}. By inspection, the function  $\bdelta$ that maximizes 
this value subject to  \eqref{eq:ODE_sensitivity_qoi:box-constraints}
    is given by \eqref{eq:Refinement:ODE:QoI-LP-sol} with objective value \eqref{eq:Refinement:ODE:QoI-LP-sol-obj}.
\end{proof}

Analogously to \cref{thm:state-error-bound} we have the following bound. 
Unlike \cref{thm:state-error-bound}, this bound is always easily computable 
because \eqref{eq:Refinement:ODE:QoI-LP} always has a solution \eqref{eq:Refinement:ODE:QoI-LP-sol}.

\begin{theorem}   \label{thm:QoI-error-bound}
	If the assumptions of \cref{thm:sensitivity_result_qoi} hold, then the following bound holds:
     \[
         |    \widetilde{q}_\bg(\widehat{\bg}) ( \widehat{\bg} - \bg_* )  | 
         \le  \int_{t_0}^{t_f}  \Big|  \widehat{\BB}(t)^T \widehat{\blambda}(t) + \nabla_g l \Big( t, \widehat{\bx}(t), \widehat{\bg} \big( t, \widehat{\bx}(t) \big) \Big)  \Big|^T  \bepsilon\big( t, \widehat{\bx}(t) \big) \, dt.
     \]
\end{theorem}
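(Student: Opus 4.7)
The plan is to observe that the right-hand side of the claimed inequality is precisely the optimal objective value \eqref{eq:Refinement:ODE:QoI-LP-sol-obj} produced in Lemma~\ref{lem:Refinement:ODE:QoI-LP-sol} for the linear program \eqref{eq:Refinement:ODE:QoI-LP}, and then to exhibit $\bg_\epsilon - \bg_*$ (restricted along the computed trajectory $\bx_\epsilon$) as a feasible point of that linear program whose objective value equals $\widehat{q}_\bg(\bg_\epsilon)(\bg_\epsilon - \bg_*)$ (up to absolute value). The argument will mirror the structure of Theorem~\ref{thm:state-error-bound}, but will be cleaner because Lemma~\ref{lem:Refinement:ODE:QoI-LP-sol} guarantees existence of an optimum.

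First, I would apply Theorem~\ref{thm:sensitivity_result_qoi} together with the adjoint identity \eqref{eq:sensitivity_result_qoi_adjoint-c} at the point $\bg = \bg_\epsilon$ with the particular perturbation $\delta \bg_\epsilon := \bg_\epsilon - \bg_*$ to obtain
\[
\widehat{q}_\bg(\bg_\epsilon)(\bg_\epsilon - \bg_*) = \int_{t_0}^{t_f}\Big[\BB_\epsilon(t)^T \blambda_\epsilon(t) + \nabla_g l\Big(t, \bx_\epsilon(t), \bg_\epsilon\big(t, \bx_\epsilon(t)\big)\Big)\Big]^T \delta \bg_\epsilon\big(t, \bx_\epsilon(t)\big)\, dt,
\]
with $\blambda_\epsilon$ defined by the adjoint equation \eref{eq:x-adjoint-c}. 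Note that the assumptions of Theorem~\ref{thm:sensitivity_result_qoi} are exactly what is needed to justify this representation.

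Next, I would set $\bdelta(t) := \delta \bg_\epsilon\big(t, \bx_\epsilon(t)\big)$. The componentwise model error bound \eref{eq:g-error-vec-delta} is precisely the feasibility constraint (\ref{eq:Refinement:ODE:QoI-LP}b), so $\bdelta \in \big(L^\infty(I)\big)^{n_g}$ is feasible for the linear program \eqref{eq:Refinement:ODE:QoI-LP}. Consequently, the absolute value of the objective (\ref{eq:Refinement:ODE:QoI-LP}a) evaluated at $\bdelta$ is bounded above by the optimal value of \eqref{eq:Refinement:ODE:QoI-LP}.

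Finally, invoking Lemma~\ref{lem:Refinement:ODE:QoI-LP-sol}, the optimal value of \eqref{eq:Refinement:ODE:QoI-LP} equals
\[
\int_{t_0}^{t_f}\Big|\BB_\epsilon(t)^T \blambda_\epsilon(t) + \nabla_g l\Big(t, \bx_\epsilon(t), \bg_\epsilon\big(t, \bx_\epsilon(t)\big)\Big)\Big|^T \bepsilon\big(t, \bx_\epsilon(t)\big)\, dt,
\]
which yields the claimed bound. There is no real obstacle here, since the result is essentially a corollary of Lemma~\ref{lem:Refinement:ODE:QoI-LP-sol} together with feasibility of $\bg_\epsilon - \bg_*$ for the LP; the only minor bookkeeping is confirming measurability and essential boundedness of $t \mapsto \delta \bg_\epsilon(t, \bx_\epsilon(t))$, which follows from $\bg_\epsilon, \bg_* \in \cG^2$ and $\bx_\epsilon \in \WW$.
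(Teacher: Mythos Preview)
Your proposal is correct and follows essentially the same route as the paper: define $\bdelta(t) := (\bg_\epsilon - \bg_*)\big(t, \bx_\epsilon(t)\big)$, observe it is feasible for \eqref{eq:Refinement:ODE:QoI-LP} by \eqref{eq:g-error-vec-delta}, and bound its objective value by the optimal value \eqref{eq:Refinement:ODE:QoI-LP-sol-obj} from Lemma~\ref{lem:Refinement:ODE:QoI-LP-sol}. Your write-up is slightly more explicit in invoking the adjoint representation \eqref{eq:sensitivity_result_qoi_adjoint-c}, but the argument is the same.
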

\begin{proof}
    The result follows from \cref{th:Refinement:ODE:QoI-LP-sol}
    because
    $\bdelta(t) := \delta \bg_*\big(t, \widehat{\bx}(t)\big) = \widehat{\bg}\big(t, \widehat{\bx}(t)\big) - \bg_*\big(t, \widehat{\bx}(t)\big)$
    is feasible for \eqref{eq:Refinement:ODE:QoI-LP} due to \eqref{eq:g-error-vec-delta}, so the corresponding objective value is bounded by the optimal objective value \eqref{eq:Refinement:ODE:QoI-LP-sol-obj}.
\end{proof}

\begin{remark}   \label{rem:Refinement:ODE:control-constraints-norm-QoI}
     If,  instead of the   componentwise error bound \eqref{eq:g-error-vec}, we have
     a norm error bound \eqref{eq:g-error}, then the control constraints \eqref{eq:ODE_sensitivity_qoi:box-constraints}
     are replaced by
     \begin{equation*}
            \|   \bdelta( t )  \|  \leq  \epsilon\big( t, \widehat{\bx}(t) \big), \quad \aall t \in I.
     \end{equation*}
     The resulting optimization problem will not have an analytical solution in general; however, one
     could still use numerical optimization methods.
\end{remark}


\section{Hypersonic Vehicle Trajectory Simulation} \label{sec:numerics_flap}
We present numerical results for a system of ODEs that employ a model function $\bg$. 
A second example is shown in the supplement.
We assume there is a true function $\bg_*$, but we may only solve the ODE using an approximation $\widehat{\bg}$. We compute the error in the ODE solution and compare with the sensitivity-based estimate given by \cref{thm:sensitivity_result} as well as the Gronwall and sensitivity-based error bounds derived in \cref{thm:error-estimate-comparison-lemma} and~\cref{thm:state-error-bound} respectively. For both problems, the $Q$-norm in \cref{thm:error-estimate-comparison-lemma} is the 2-norm, and the matrix-valued function $\BQ(t)$ in \cref{thm:state-error-bound} is simply the identity matrix, yielding an $L^2$-norm; this choice ensures an equitable comparison. We also compute the error in a QoI and compare with the sensitivity-based estimate given by \cref{thm:sensitivity_result_qoi} as well as the sensitivity-based error bound given by \cref{thm:QoI-error-bound}.

We consider a dynamical system for a notional hypersonic vehicle in longitudinal flight. 
Time $t$ is measured in seconds, and the states are horizontal position $\bx_1$ [km], altitude $\bx_2$ [km], speed $\bv$  [km/s], and flight path angle $\bgamma$  [$^\circ$],
i.e., in this example,
\[
	\bx(t) =  \big( \bx_1(t), \bx_2(t), \bv(t), \bgamma(t) \big).
\]
The angle of attack $\balpha$ [$^\circ$] is a given input, which we set to
\[
	\balpha(t) = \Bigg(10 - \left(\frac{t}{2000} \right) 4 \Bigg)^\circ, \qquad t \in [0, 2000].
\]
In a trajectory optimization problem, it would be the control. 

The hypersonic vehicle used in this example has mass $m = 1200 \textrm{ kg}$ 
and reference area $A_w = 10 \textrm{ m$^2$}$. Lift and drag are given by
 \begin{align*}
			L(x_2, v, \alpha) = \overline{q}(x_2, v) C_L(\alpha) A_w, \qquad
			D(x_2, v, \alpha) = \overline{q}(x_2, v) C_D(\alpha) A_w, 
\end{align*}
where
\begin{equation*}
	\overline{q}(x_2, v) = \frac{1}{2} \rho(x_2) v^2 
\end{equation*}
is the dynamic pressure, which depends on atmospheric density $\rho(x_2) = 1.225 \, \mathrm{exp}(-0.14 x_2)$ [kg/m$^3$].
The lift and drag coefficients $C_L, C_D$ are assumed to be functions of angle of attack that are expensive to compute. They will play the role of the model function in this example, i.e.,
\[
      \bg \big(t, \bx(t) \big) =  \begin{pmatrix}  C_L \big(\balpha(t) \big)  \\  C_D \big(\balpha(t) \big) \end{pmatrix}.
\]
In this example, the ``true'' lift and drag coefficients are taken from \cite{STauqeerulIslamRizvi_LHe_DXu_2015a} and are given by
\[
	C_L^*(\alpha) = -0.04 + 0.8 \alpha, \qquad C_D^*(\alpha) = 0.012 - 0.01\alpha + 0.6 \alpha^2
\]
where $\alpha$ is in radians.

The dynamics of the hypersonic vehicle also depend on gravitational acceleration, which is computed as $g(x_2) = \mu / (R_E + x_2)^2$ [m/s$^2$],
where $\mu = 3.986 \times 10^{14} \textrm{ m$^3$/s$^2$}$ is the standard gravitational parameter and $R_E \approx 6.371 \times 10^6 \textrm{ m}$ is the radius of Earth.

The dynamic equations are given by 
\begin{equation*}
\begin{aligned}
    \bx_1'(t) &= \bv(t) \cos \bgamma(t), \\
    \bx_2'(t) &= \bv(t) \sin \bgamma(t), \\
    \bv'(t) &= -\frac{1}{m} \Big( D\big(\bx_2(t), \bv(t), \balpha(t)\big) + m g\big(\bx_2(t)\big) \sin \bgamma(t) \Big), \\
    \bgamma'(t) &= \frac{1}{m\bv(t)} \Big( L\big(\bx_2(t), \bv(t), \balpha(t)\big) - mg\big(\bx_2(t)\big) \cos \bgamma(t) + \frac{m\bv(t)^2 \cos \bgamma(t)}{R_E + \bx_2(t)} \Big)
\end{aligned}
\end{equation*}
with initial conditions
\[
 	\bx_1(0) = 0, \quad \bx_2(0) = 80, \quad \bv(0) = 5, \quad \bgamma(0) = -5^\circ.
\]

We assume the lift and drag coefficients are estimated by a perturbed model
\[
	C_L^\epsilon(\alpha) = -0.04 + (0.8 + \epsilon) \alpha, \qquad C_D^\epsilon(\alpha) = 0.012 - 0.01 \alpha + (0.6 - \epsilon) \alpha^2.
\]
For $\epsilon = 0.01$ the perturbed trajectory $\widehat{\bx}$ and the true trajectory $\bx_*$ are shown in \cref{fig:hypersonic-trajectories}.

\begin{figure}[!htb]
\centering
\includegraphics[width=0.6\textwidth]{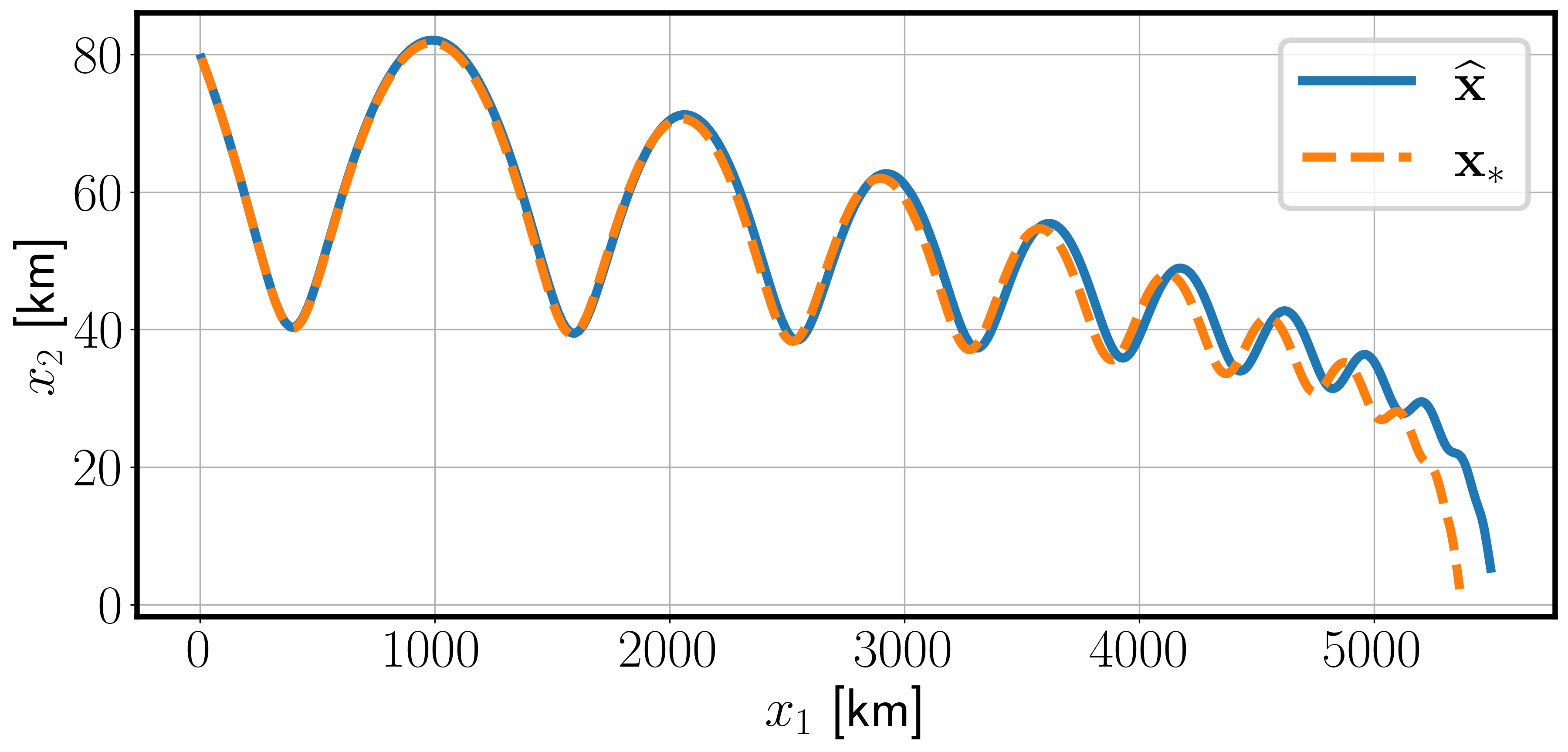}
\vspace*{-2ex}
\caption{Perturbed ($\epsilon = 0.01$) and true trajectories for hypersonic ODE.} \label{fig:hypersonic-trajectories}
\end{figure}

\begin{figure}[!htb]
\includegraphics[width=0.49\textwidth]{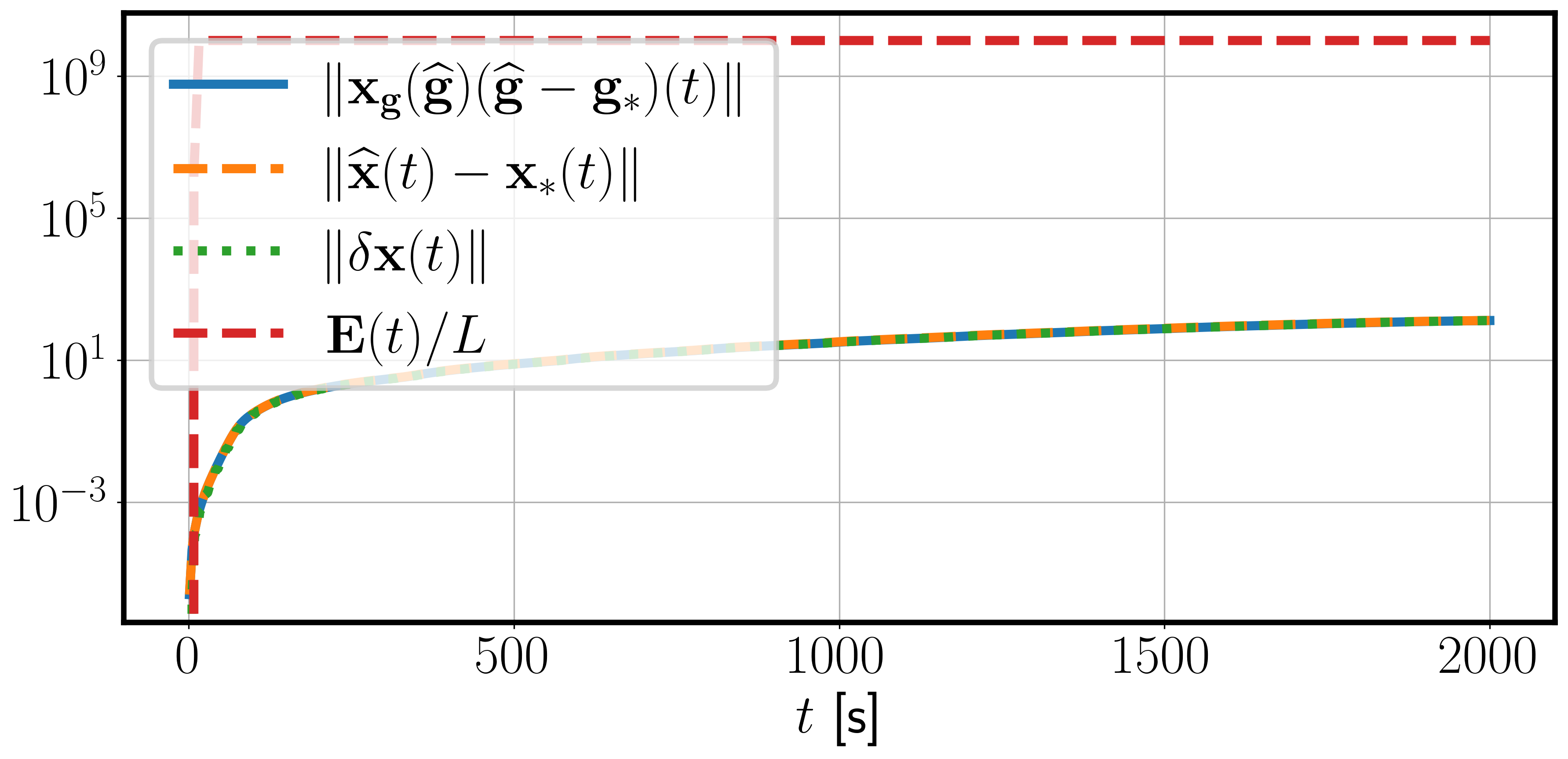} \hfill
\includegraphics[width=0.49\textwidth]{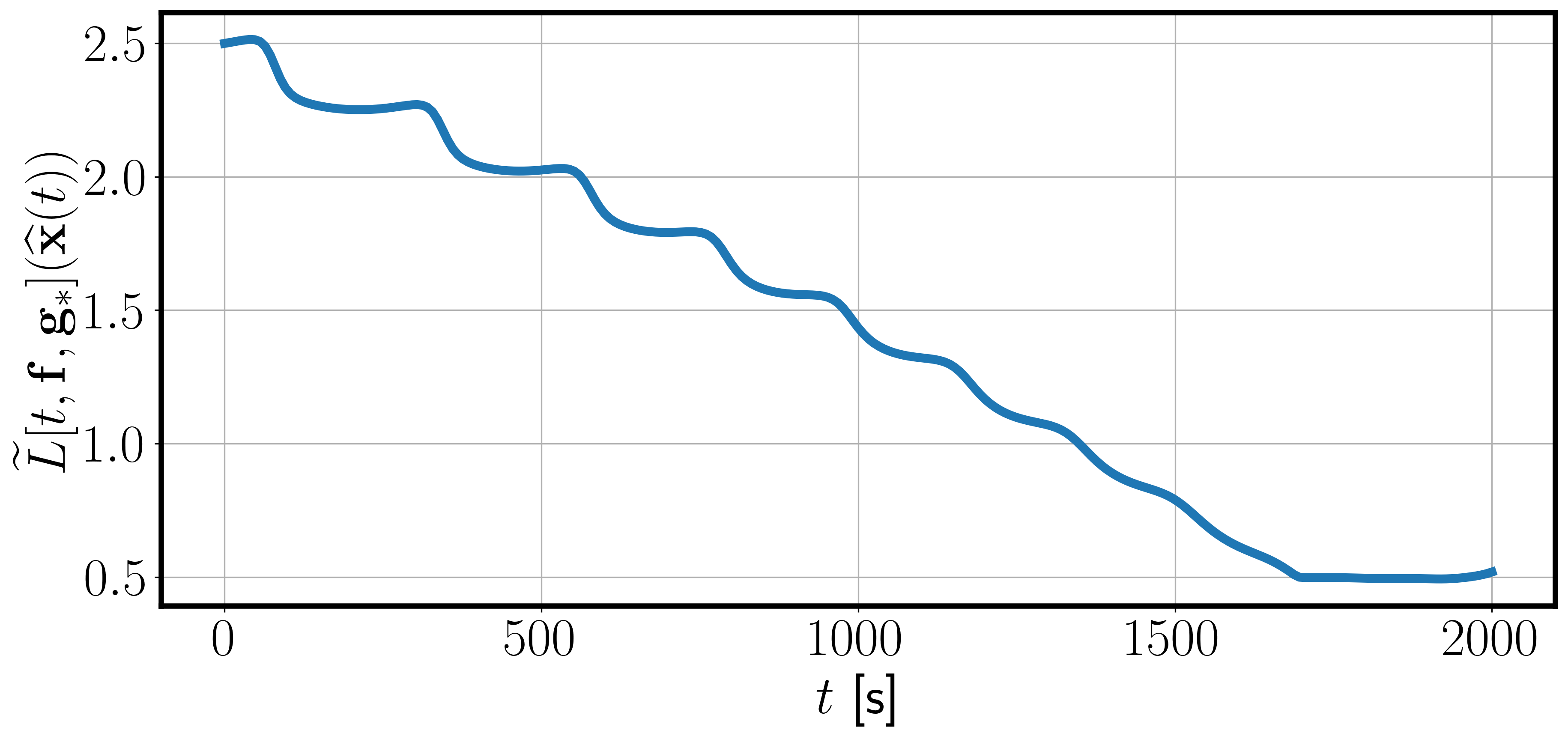}

\caption{Left: Sensitivity-based trajectory error estimate (blue) and trajectory error (orange) in good agreement, and trajectory error bound (green) yields a much tighter bound on the trajectory error than the Gronwall-type error bound (red, capped at $10^{10}$).
              Right: Approximate logarithmic Lipschitz constant along perturbed trajectory for hypersonic ODE.}
\label{fig:error-estimates-hypersonic}
\end{figure}

We compute the Gronwall and sensitivity-based error bounds for the ODE solution. A Lipschitz constant $L$ satisfying \cref{eq:uniformly-lipschitz} is more difficult to identify, so we consider $\BE(t) / L$ instead for simplicity. 
As the left plot in \cref{fig:error-estimates-hypersonic} shows, $\BE(t) / L$ grows extremely fast over time and reaches the $10^{10}$ cap within the first few seconds. It is clear from these results that no matter the value of $L$, \cref{thm:error-estimate-comparison-lemma} yields an error bound that is far too conservative to be useful for this problem.
The reason for the pessimistic Gronwall-type error bound is that the approximate logarithmic Lipschitz constant evaluated along the trajectory, i.e.,
$\widetilde{L}[t, \bff, \bg_*]\big( \widehat{\bx}(t) \big)$ where $\widetilde{L}$ is as defined in \cref{eq:logarithmic-norm-local-deriv} with respect to the 2-norm,
is positive-valued over a long time interval; see the right plot in \cref{fig:error-estimates-hypersonic}.

The left plot in \cref{fig:hypersonic-epsilon-study} shows the effect of the perturbation parameter $\epsilon$ on the $L^2$-error of the trajectory and the sensitivity-based estimate of the trajectory error, as well as the sensitivity-based upper bound.
\begin{figure}[!htb]
\includegraphics[width=0.48\textwidth]{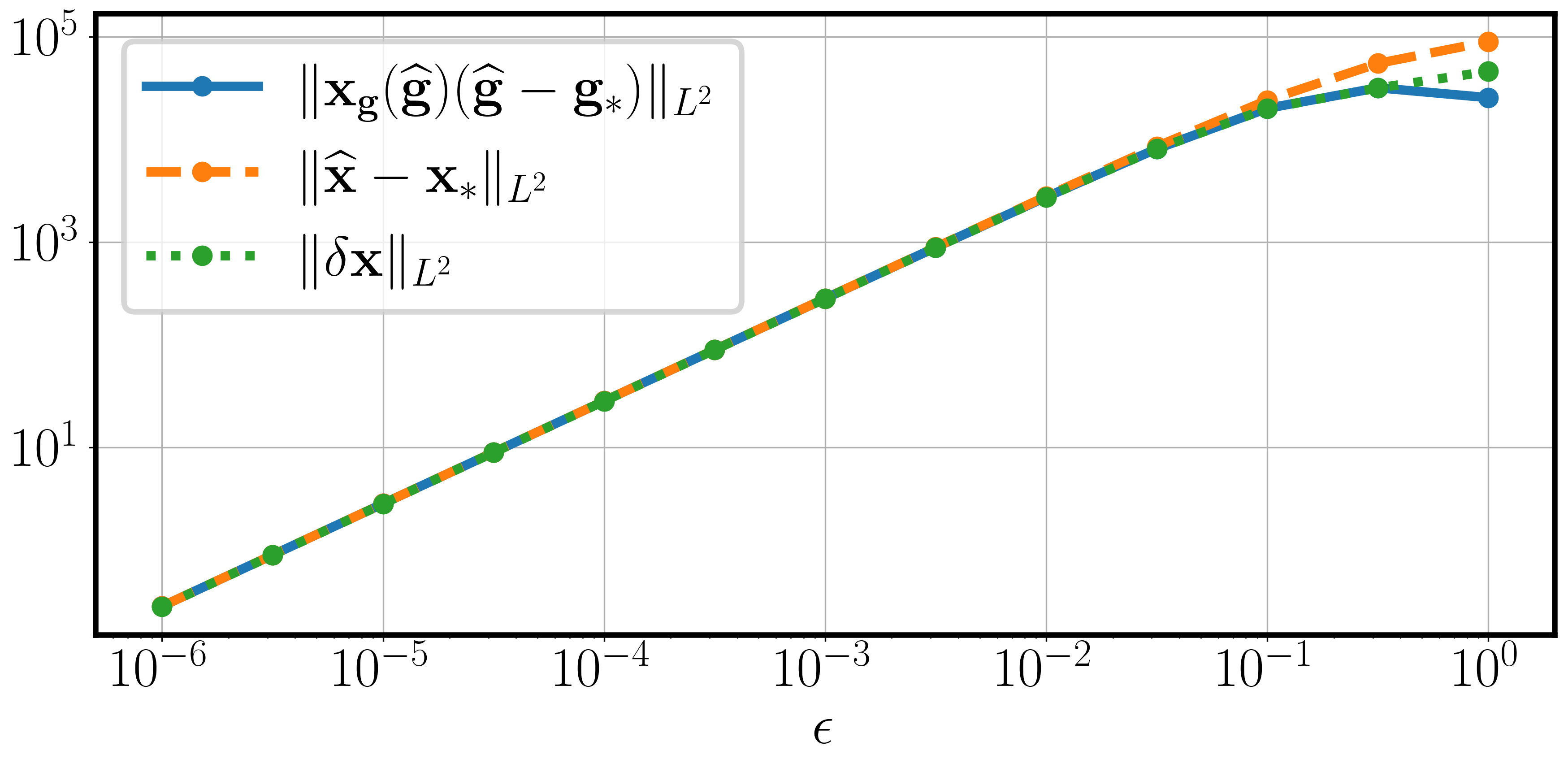} \hfill
\includegraphics[width=0.50\textwidth]{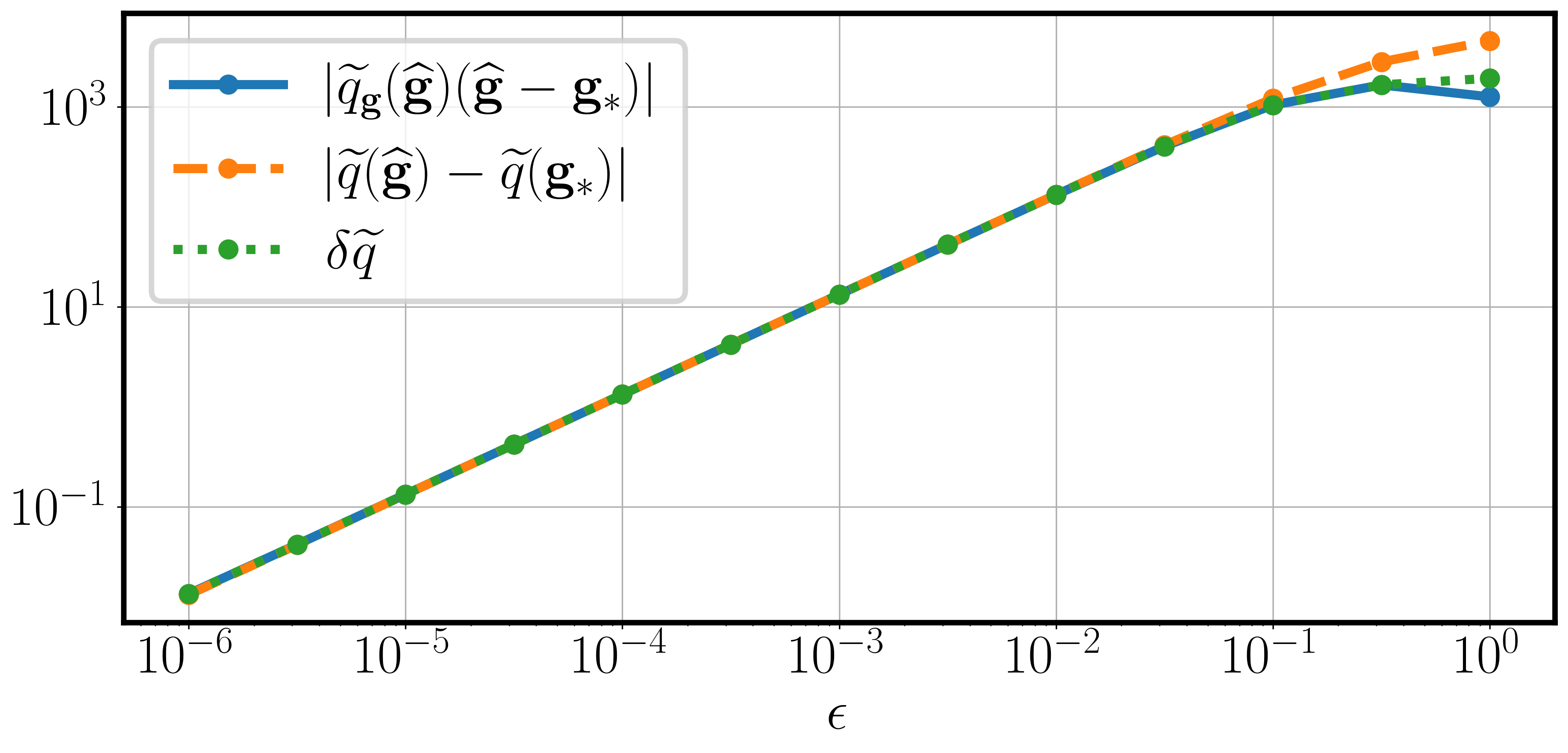}

\caption{Left: Strong agreement between the $L^2$ trajectory error estimates (blue), trajectory errors (orange), and error bounds (green)
              for hypersonic ODE for a range of perturbations $\epsilon$.
              Right: Strong agreement between the QoI error estimates (blue), QoI errors (orange), and error bounds (green) for hypersonic ODE
               for a range of perturbations $\epsilon$.}
       \label{fig:hypersonic-epsilon-study}
\end{figure}
\cref{thm:state-error-bound} yields a tight upper bound on the sensitivity-based estimate of the trajectory error, although for larger
perturbations $\epsilon \in [10^{-1}, 10^0]$ the trajectory error is visibly underestimated by the sensitivity-based error bound. This is expected, as the sensitivity-based error estimate is more reliable when the model perturbation is small.

Next, we consider the sensitivity of a quantity of interest. We consider the vehicle downrange:
\[
	\widetilde{q}(\bg) = \bx_1(t_f).
\]
The true QoI error $| \widetilde{q}(\widehat{\bg}) - \widetilde{q}(\bg_*) |$, the sensitivity-based estimate $| \widetilde{q}_\bg(\widehat{\bg}) (\widehat{\bg} - \bg_*) |$, and the upper bound 
$\delta \widetilde{q} := \int_{t_0}^{t_f}  \big|  \widehat{\BB}(t)^T \widehat{\blambda}(t) + \nabla_g l \big( t, \widehat{\bx}(t), \widehat{\bg} \big( t, \widehat{\bx}(t) \big) \big)  \big|^T  \bepsilon\big( t, \widehat{\bx}(t) \big) \, dt$
 of \cref{thm:QoI-error-bound} were computed for several values of the perturbation parameter $\epsilon$ and are given in the right plot in \cref{fig:hypersonic-epsilon-study}. 
 The sensitivity-based estimate is reliable and the approximate upper bound is tight for small $\epsilon$.
 
 The supplement presents numerical results for two additional examples.

\section{Conclusions and Future Work} \label{sec:conclusion}

We have applied the Implicit Function Theorem in an appropriate function space setting to obtain rigorous sensitivity results for the solution of an ODE with respect to a 
state-dependent component function, and we used these findings to develop sensitivity-based error estimates and bounds for the ODE solution and for a quantity of interest that depends on the ODE solution.
The sensitivity-based error bound for the ODE solution was shown to significantly outperform error bounds from classical ODE perturbation theory. 

The sensitivity-based upper bounds for the ODE solution error and QoI error given (resp.) by \cref{thm:state-error-bound} and 
\cref{thm:QoI-error-bound} give a computationally inexpensive way to assess the quality of the computed solution to \eqref{eq:IVP} 
when an approximation $\widehat{\bg}$ is used in place of the true model $\bg_*$ and an inexpensive pointwise error bound is available for $| \widehat{\bg} - \bg_* |$. 
This can be leveraged to develop a \emph{sensitivity-driven} method for adaptively constructing surrogate models from high-fidelity data, which can be used to simulate a dynamical system using surrogates while still ensuring a high-quality solution; this will be explored in a forthcoming paper. Future work will also focus on obtaining sensitivity analysis results for solutions of optimal control problems with surrogates entering in the dynamics. The function spaces used for sensitivity analysis in this paper were chosen with these future extensions in mind.

\begin{appendix}

\section[Fr\'echet Differentiability of Nemytskii Operators]{Fr\'echet Differentiability of Nemytskii Operators} \label{sec:sensitivity_proof}

\subsection*{Proof of~\cref{thm:F-Frechet_point}}
    Let $(\bx, \bg) \in (L^\infty(I))^{n_x} \times \big(\cG^1(I)\big)^{n_g}$ where $\bg$ satisfies \cref{eq:Lipschitz_g_x_local}, and define
    \begin{align*}
    	R_{L^\infty} &:= \| \bx \|_{(L^\infty(I))^{n_x}}, & 
	R_\cG := \| \bg \|_{(\cG^1(I))^{n_g}}.
    \end{align*}
    We will need the local Lipschitz properties related to \cref{as:assumption1}~(iii) and  \cref{eq:Lipschitz_g_x_local}
    for $x_1, x_2 \in \cB_{2 R_{L^\infty}}(0)$ and $g_1, g_2 \in \cB_{2 R_\cG}(0)$ along with some related properties.
    We will collect these first.
    
    The local Lipschitz property, \cref{as:assumption1}~(iii), implies that there exists $L^1_f > 0$ such that
    \begin{equation} \label{eq:Lipschitz_f'}
    \begin{aligned}    
    		\| \bff_x(t, x_1, g_1) - \bff_x(t, x_2, g_2) \| &+ \| \bff_g(t, x_1, g_1) - \bff_g(t, x_2, g_2) \| 
		\leq L^1_f ( \| x_1 - x_2 \| + \| g_1 - g_2 \| ), \\
		&\qquad \qquad \aall t \in I \mbox{ and all } x_1, x_2 \in \cB_{2 R_{L^\infty}}(0), g_1, g_2 \in \cB_{2 R_\cG}(0).
    \end{aligned}
    \end{equation}
    It follows from \cref{as:assumption1}~(ii), (iii) that
    \begin{equation}
    \begin{aligned}
        \|  \bff_x( t, x, g )  \|    
        & \le  \|  \bff_x( t, 0, 0 )  \|   +  ( \|  \bff_x( t, x, g )  \|   -   \|  \bff_x( t, 0, 0 )  \|  )
        \le K  + L^1_f \,  (  2 R_{L^\infty} +  2 R_\cG ), \\
        \|  \bff_g( t, x, g )  \|    
        & \le K  + L^1_f \, (  2 R_{L^\infty} +  2 R_\cG ) =: R_f, \\
        &\qquad \qquad \qquad \qquad \qquad \qquad \aall t \in I \mbox{ and all } x \in \cB_{2 R_{L^\infty}}(0), g \in \cB_{2 R_\cG}(0).
    \label{eq:bound_f'}
    \end{aligned}
    \end{equation}
    
    By \cref{eq:Lipschitz_g_x_local}, there exists $L_g^1 > 0$ such that
    \begin{equation}  \label{eq:Lipschitz_g_x}
    	\| \bg_x(t, x_1) - \bg_x(t, x_2) \| \leq L_g^1 \| x_1 - x_2 \|, 
	\quad \aall t \in I \;   \mbox{ and all }  x_1, x_2 \in \cB_{2 R_{L^\infty}}(0).
    \end{equation}
   Moreover, since $\bg$ has a bounded derivative, it is (globally) Lipschitz continuous with respect to $x$:
    \begin{equation*}
    	\| \bg(t, x_1) - \bg(t, x_2) \| \leq L_g \| x_1 - x_2 \|,  \quad  \aall t \in I \mbox{ and all }  x_1, x_2 \in \real^{n_x}.
    \end{equation*}
       
    Finally, define 
    \begin{equation*}
    	\bh: I \times \real^{n_x} \rightarrow  I \times  \real^{n_x} \times \real^{n_g}, \qquad
	(t,x) \mapsto \bh(t, x) := [t, x, \bg(t,x)].
    \end{equation*}
   Because $\bg$ is Lipschitz in $x$, $\bh$ is also Lipschitz in $x$:
    \begin{align}  \label{eq:Lipschitz_h}
    	\| \bh(t, x_1) - \bh(t, x_2) \| \leq L_h \| x_1 - x_2 \|,  \quad \aall t \in I \;   \mbox{ and all }  x_1, x_2 \in \real^{n_x}.
    \end{align}
    
    \sloppy Now, let  $\delta \bx \in \LL$ and $\delta \bg \in \big(\cG^1(I)\big)^{n_g}$ be functions satisfying 
    $\| \delta \bx \|_{(L^\infty(I))^{n_x}}\leq R_{L^\infty}$ and $\| \delta \bg \|_{(\cG^1(I))^{n_g}} \leq R_\cG$. 
    For all $s \in [0, 1]$ we have
    \begin{align*}
    	\| \bx + s\delta \bx \|_{(L^\infty(I))^{n_x}}&\leq \| \bx \|_{(L^\infty(I))^{n_x}}+ s \| \delta \bx \|_{(L^\infty(I))^{n_x}}\leq 2 R_{L^\infty}, \\
        \| \bg + s \delta \bg \|_{(\cG^1(I))^{n_g}} &\leq \| \bg \|_{(\cG^1(I))^{n_g}} + s \| \delta \bg \|_{(\cG^1(I))^{n_g}}  \leq 2 R_\cG,
    \end{align*}
    so that the Lipschitz and boundedness properties  \cref{eq:Lipschitz_f'}, \cref{eq:bound_f'}, \cref{eq:Lipschitz_g_x} 
    hold with $$x_1 = \bx(t), \quad x_2 = \bx(t) + s\delta \bx(t), \quad g_1 = \bg\big(t, \bx(t)\big), \quad g_2 = \bg\big(t, \bx(t) + s\delta \bx(t)\big).$$

    By definition of \cref{eq:F-superposition} and \cref{eq:F-Frechet_deriv} we have for  almost all $t \in I$,
    \begin{align}  \label{eq:r1-2}
       & \BF_1(\bx+\delta \bx, \bg + \delta \bg)(t)  - \BF_1(\bx, \bg)(t)  - [\BF_1'(\bx, \bg) (\delta \bx, \delta \bg)](t)   \nonumber \\
       &=\bff\Big( t, \bx(t) + \delta \bx(t), \bg\big(t, \bx(t) + \delta \bx(t) \big) \Big) 
                         -  \bff\Big( t, \bx(t), \bg\big(t, \bx(t) \big) \Big)        \nonumber \\
       &\quad \underbrace{ \qquad  -  \Big[ \bff_x\Big( t, \bx(t), \bg\big(t, \bx(t) \big) \Big) + \bff_g\Big( t, \bx(t), \bg\big(t, \bx(t) \big) \Big) \bg_x\big(t, \bx(t) \big) \Big] \delta \bx(t)  \quad }_{ := r_1[t]}      \\
       &\quad  + \bff\Big( t, \bx(t) + \delta \bx(t), (\bg + \delta \bg)\big(t, \bx(t) + \delta \bx(t) \big) \Big) 
                      -    \bff\Big( t, \bx(t) + \delta \bx(t), \bg\big(t, \bx(t) + \delta \bx(t) \big) \Big)             \nonumber \\
        & \quad \underbrace{ \qquad    - \bff_g\Big( t, \bx(t), \bg\big( t, \bx(t) \big) \Big) \delta \bg \big(t,  \bx(t) \big). \hspace*{40ex} \qquad }_{:= r_2[t]}   \nonumber 
    \end{align}
    The first remainder term in \cref{eq:r1-2} is
    \begin{align}
        r_1[t]
        &= \int_0^1 \bff_x\Big( t, \bx(t) + s \delta \bx(t), \bg\big(t, \bx(t) + s \delta \bx(t) \big) \Big)  \nonumber  \\
         &\hspace*{5ex} + \bff_g\Big( t, \bx(t) + s \delta \bx(t), \bg\big(t, \bx(t) + s \delta \bx(t) \big) \Big)  \bg_x\big(t, \bx(t) + s \delta \bx(t) \big)  \nonumber  \\
         &\hspace*{5ex}  -  \bff_x\Big( t, \bx(t), \bg\big(t, \bx(t) \big) \Big) 
                          -  \bff_g\Big( t, \bx(t), \bg\big(t, \bx(t) \big) \Big) \bg_x\big(t, \bx(t) \big)  \; ds \; \delta \bx(t)   \nonumber  \\
        &= \int_0^1  \bff_x\Big( t, \bx(t) + s \delta \bx(t), \bg\big(t, \bx(t) + s \delta \bx(t) \big) \Big) 
                                   -  \bff_x\Big( t, \bx(t), \bg\big(t, \bx(t) \big) \Big)  \nonumber  \\
        &\hspace*{5ex}      + \Bigg( \bff_g\Big( t, \bx(t) + s \delta \bx(t), \bg\big(t, \bx(t) + s \delta \bx(t) \big) \Big) 
                     -  \bff_g\Big( t, \bx(t), \bg\big(t, \bx(t) \big) \Big) \Bigg) \, \bg_x\big(t, \bx(t) \big)   \nonumber  \\
        &\hspace*{5ex}      + \bff_g\Big( t, \bx(t) + s \delta \bx(t), \bg\big(t, \bx(t) + s \delta \bx(t) \big) \Big)  
                    \Big( \bg_x\big(t, \bx(t) + s \delta \bx(t) \big) - \bg_x\big(t, \bx(t) \big) \Big) \, ds \; \delta \bx(t). \label{eq:r_1}
    \end{align}
    It follows from \cref{eq:Lipschitz_f'}, \cref{eq:bound_f'}, \cref{eq:Lipschitz_g_x}, and \cref{eq:Lipschitz_h} that  the remainder term in \cref{eq:r_1} satisfies
    \begin{align}  \label{eq:r_1_bound}
       \big\|  r_1[t]  \big\| 
        &\le \frac{  L_f^1 L_h + L_f^1 L_h R_\cG + R_f L_g^1  }{2} \; \| \delta \bx(t) \|^2      \nonumber \\
        &\leq \frac{  L_f^1 L_h + L_f^1 L_h R_\cG + R_f L_g^1  }{2} \; \| \delta \bx \|_{(L^\infty(I))^{n_x}}^2, \quad \aall t \in I.
    \end{align}
    
   The second remainder term in \cref{eq:r1-2} is
    \begin{align}  \label{eq:r_2}
       r_2[t] 
        &= \int_0^1 \bff_g\Big( t, \bx(t) + \delta \bx(t), \bg\big( t,  \bx(t) + \delta \bx(t) \big) + s \delta \bg\big( t,   \bx(t) + \delta \bx(t) \big) \Big)  \; ds  
               \; \delta \bg \big(  t,  \bx(t) + \delta \bx(t) \big)  \nonumber \\
         & \qquad -  \bff_g\Big( t, \bx(t), \bg\big( t,   \bx(t) \big) \Big) \delta \bg \big(  t,  \bx(t) \big)  \nonumber \\
         &= \int_0^1  \bff_g\Big( t, \bx(t) + \delta \bx(t), \bg\big(  t,  \bx(t) + \delta \bx(t) \big) + s \delta \bg\big(  t,   \bx(t) + \delta \bx(t) \big) \Big)   \nonumber \\
           &\underbrace{ \qquad      - \bff_g\Big( t, \bx(t) + \delta \bx(t), \bg\big(  t,  \bx(t) + \delta \bx(t) \big) \Big)     \; ds   \; \delta \bg \big(  t,  \bx(t) + \delta \bx(t) \big) \quad }_{ := r_{2, 1}[t]  } \nonumber \\
         &   + \underbrace{\Big[  \bff_g\Big( t, \bx(t) + \delta \bx(t), \bg\big( t,   \bx(t) + \delta \bx(t) \big) \Big)
                      - \bff_g \Big(t, \bx(t), \bg\big(  t,  \bx(t) \big) \Big) \Big] \; \delta \bg \big( t,  \bx(t) + \delta \bx(t) \big)}_{:=r_{2, 2}[t]}    \nonumber \\
          &  + \underbrace{\bff_g \Big(t, \bx(t), \bg\big(  t,  \bx(t) \big) \Big) \Big[ \delta \bg
                                          \big( t,   \bx(t) + \delta \bx(t) \big) -  \delta \bg \big(  t,   \bx(t) \big) \Big]}_{:=r_{2, 3}[t]}.
     \end{align}
     Using \cref{eq:Lipschitz_f'}, the remainder term $r_{2,1}[t]$ in \cref{eq:r_2} is bounded by
     \begin{equation} \label{eq:r_21_bound}
     	\big\|   r_{2,1}[t]   \big\| \leq \frac{L_f^1}{2} \big\| \delta \bg\big( t,  \bx(t) + \delta \bx(t)\big) \big\|^2 
	                                      \leq \frac{L_f^1}{2} \| \delta \bg \|_{(\cG^1(I))^{n_g}}^2, \quad \aall t \in I.
	\end{equation}
     Similarly, using \cref{eq:Lipschitz_f'} and \cref{eq:Lipschitz_h}, the $r_{2, 2}[t]$ term in \cref{eq:r_2} is bounded by
     \begin{equation}  \label{eq:r_22_bound}
     	\big\| r_{2, 2}[t] \big\| \leq L_f^1 L_h \, \| \delta \bx(t) \| \, \big\| \delta \bg\big(  t,  \bx(t) + \delta \bx(t) \big) \big\| 
	        \leq L_f^1 L_h \, \| \delta \bx \|_{(L^\infty(I))^{n_x}}\, \| \delta \bg \|_{(\cG^1(I))^{n_g}}, \quad \aall t \in I.
     \end{equation}
     To estimate  $r_{2, 3}[t]$ in \cref{eq:r_2} we first use \cref{eq:Lipschitz_g_x} to bound
     \begin{align*}
  & \big\|  \delta \bg\big( t,  \bx(t) + \delta \bx(t) \big)  - \delta \bg\big( t,   \bx(t) \big) \big\|      \nonumber \\
  & \le \Big\|  \int_0^1  \delta \bg_x\big( t,   \bx(t) + s \delta \bx(t)\big) - \delta \bg_x\big( t,   \bx(t)\big) \, ds \, \Big\|  \; \big\|  \delta \bx(t) \big\| 
	     + \big\|  \delta \bg_x \big(  t,   \bx(t) \big) \big\|  \, \big\|  \delta \bx(t) \big\|      \nonumber \\
	  & \le \frac{L_g^1}{2}   \| \delta \bx(t) \|^2  + \big\| \delta \bg_x \big(  t,  \bx(t) \big) \big\| \; \| \delta \bx(t) \|  
	     \le \frac{L_g^1}{2}   \| \delta \bx \|_{(L^\infty(I))^{n_x}}^2  + \| \delta \bg \|_{(\cG^1(I))^{n_g}}  \| \delta \bx \|_{(L^\infty(I))^{n_x}}.    
     \end{align*}
     Using this bound and \cref{eq:bound_f'} implies
     \begin{equation}  \label{eq:r_23_bound}
     	\big\| r_{2, 3}[t] \big\| 
	\leq R_f \Big( \frac{L_g^1}{2}   \| \delta \bx \|_{(L^\infty(I))^{n_x}}^2  + \| \delta \bg \|_{(\cG^1(I))^{n_g}}  \| \delta \bx \|_{(L^\infty(I))^{n_x}}\Big), \quad \aall t \in I.
    \end{equation}
    The bounds \cref{eq:r_21_bound}, \cref{eq:r_22_bound}, and \cref{eq:r_23_bound} imply the existence of a $c>0$ such that
     \begin{equation} \label{eq:r_2_bound}
     	\big\| r_2[t] \big\| \leq  c (  \| \delta \bg \|_{(\cG^1(I))^{n_g}} +  \| \delta \bx \|_{(L^\infty(I))^{n_x}})^2, \quad \aall t \in I.
     \end{equation}
     
     Finally, \cref{eq:r1-2} and the bounds \cref{eq:r_1_bound} and \cref{eq:r_2_bound} yield
    \[
           \lim_{ \| \delta \bx \|_{(L^\infty(I))^{n_x}}+ \| \delta \bg \|_{(\cG^1(I))^{n_g}} \rightarrow 0 }
            \frac{\| \BF_1(\bx + \delta \bx, \bg + \delta \bg) - \BF_1(\bx, \bg) - \BF_1'(\bx, \bg) (\delta \bx, \delta \bg) \|_{(L^\infty(I))^{n_x}}}
                    {\| \delta \bx \|_{(L^\infty(I))^{n_x}}+ \| \delta \bg \|_{(\cG^1(I))^{n_g}}} 
       = 0.
     \]
     
     To conclude the proof, we need to show that $(\delta \bx, \delta \bg) \mapsto \BF_1'(\bx, \bg) (\delta \bx, \delta \bg)$
     is a bounded linear operator from $\big(L^\infty(I)\big)^{n_x} \times \big(\cG^1(I)\big)^{n_g}$ to $ \LL$.
     This immediately follows from \cref{eq:bound_f'} and the essential boundedness of $\bg_x$ assured by definition of $\big(\cG^1(I)\big)^{n_g}$ in \cref{eq:g-space}.
\hfill $\Box$

\subsection*{Proof of \cref{thm:F-Frechet_global}}
     The Fr\'echet differentiability of $\BF_2$ at any point $(\bx, \bg) \in \big(L^\infty(I)\big)^{n_x} \times \big(\cG^2(I)\big)^{n_g} $ immediately
     follows from \cref{thm:F-Frechet_point} since
    $ \| \bg \|_{(\cG^1(I))^{n_g}} \le  \| \bg \|_{(\cG^2(I))^{n_g} }$ for all $\bg \in \big(\cG^2(I)\big)^{n_g} $ and
     the boundedness of $\bg_{xx}$ ensures that $\bg_x$ is locally (and in fact globally) Lipschitz, 
     so \cref{eq:Lipschitz_g_x_local} is satisfied for all $\bg \in \big(\cG^2(I)\big)^{n_g}$.
     
     \sloppy
     Next, we show that the (global) Fr\'echet derivative 
     \[
          \BF_2' : \big(L^\infty(I)\big)^{n_x} \times \big(\cG^2(I)\big)^{n_g}  \rightarrow \cL\Big( \big(L^\infty(I)\big)^{n_x} \times \big(\cG^2(I)\big)^{n_g}, \big(L^\infty(I)\big)^{n_x} \Big)
     \]
      is a continuous map. 
     Let $(\overline{\bx}, \overline{\bg}) \in \big(L^\infty(I)\big)^{n_x} \times \big(\cG^2(I)\big)^{n_g} $ be given. 
     Given $\delta > 0$, let $(\bx, \bg) \in \big(L^\infty(I)\big)^{n_x} \times \big(\cG^2(I)\big)^{n_g} $ satisfy 
     $\| \bx - \overline{\bx} \|_{(L^\infty(I))^{n_x}}+ \| \bg - \overline{\bg} \|_{(\cG^2(I))^{n_g}} < \delta$.
     To apply the bounds  \cref{eq:Lipschitz_f'}, \cref{eq:bound_f'}, \cref{eq:Lipschitz_g_x}, \cref{eq:Lipschitz_h}, assume that
    \[
    	 \| \bx \|_{(L^\infty(I))^{n_x}}, \; \| \overline{\bx} \|_{(L^\infty(I))^{n_x}}\le  R_{L^\infty}, \qquad
	  \| \bg \|_{(\cG^1(I))^{n_g}}, \; \| \overline{\bg} \|_{(\cG^1(I))^{n_g}}  \le  R_\cG.
    \]

     We have
     \begin{equation}
      \begin{aligned}
     	&\| \BF_2'(\bx, \bg) - \BF_2'(\overline{\bx}, \overline{\bg}) \|_{\cL\big( (L^\infty(I))^{n_x} \times (\cG^2(I))^{n_g}, (L^\infty(I))^{n_x} \big)} \\
     	&\quad = \sup_{\| \delta \bx \|_{(L^\infty(I))^{n_x}}+ \| \delta \bg \|_{(\cG^2(I))^{n_g} } = 1}  \esssup_{t \in I} \Big\| \Big[ \bff_x \Big(t, \bx(t), \bg \big(t, \bx(t) \big) \Big) - \bff_x \Big(t, \overline{\bx}(t), \overline{\bg} \big(t, \overline{\bx}(t) \big) \Big) \Big] \delta \bx(t) \\ 
     	&\qquad+ \Big[ \bff_g \Big(t, \bx(t), \bg \big(t, \bx(t) \big) \Big) \bg_x \big(t, \bx(t) \big) - \bff_g \Big(t, \overline{\bx}(t), \overline{\bg} \big(t, \overline{\bx}(t) \big) \Big) \overline{\bg}_x \big(t, \overline{\bx}(t) \big) \Big] \delta \bx(t) \\
     	&\qquad+ \bff_g \Big(t, \bx(t), \bg \big(t, \bx(t) \big) \Big) \delta \bg \big(t, \bx(t) \big) - \bff_g \Big(t, \overline{\bx}(t), \overline{\bg}(t, \overline{\bx}(t) \big) \Big) \delta \bg \big(t, \overline{\bx}(t) \big)
     	\Big\| \\
     	&\quad \leq \sup_{\| \delta \bg \|_{(\cG^2(I))^{n_g} } = 1}  \esssup_{t \in I} \underbrace{\Big\| \bff_x \Big(t, \bx(t), \bg \big(t, \bx(t) \big) \Big) - \bff_x \Big(t, \overline{\bx}(t), \overline{\bg} \big(t, \overline{\bx}(t) \big) \Big) \Big\|}_{:= S_1[t]} \\ 
     	&\qquad+ \underbrace{\Big\| \bff_g \Big(t, \bx(t), \bg \big(t, \bx(t) \big) \Big) \bg_x \big(t, \bx(t) \big) - \bff_g \Big(t, \overline{\bx}(t), \overline{\bg} \big(t, \overline{\bx}(t) \big) \Big) \overline{\bg}_x \big(t, \overline{\bx}(t) \big) \Big\|}_{:= S_2[t]} \\
     	&\qquad+ \underbrace{\Big\| \bff_g \Big(t, \bx(t), \bg \big(t, \bx(t) \big) \Big) \delta \bg \big(\bx(t) \big) - \bff_g \Big(t, \overline{\bx}(t), \overline{\bg}(t, \overline{\bx}(t) \big) \Big) \delta \bg \big(t, \overline{\bx}(t) \big) \Big\|}_{:= S_3[t]}. \label{eq:S1-3}
     \end{aligned}
     \end{equation}
     The term $S_1[t]$ in \cref{eq:S1-3} is bounded for almost all $t \in I$ using \cref{eq:Lipschitz_f'} and \cref{eq:Lipschitz_h} by
     \begin{align}   \label{eq:S_1[t]_bound}
     	S_1[t] 
     	&\leq \Big\| \bff_x \Big(t, \bx(t), \bg \big(t, \bx(t) \big) \Big) - \bff_x \Big(t, \overline{\bx}(t), \bg \big(t, \overline{\bx}(t) \big) \Big) \Big\| \nonumber \\
     	& \qquad  + \Big\| \bff_x \Big(t, \overline{\bx}(t), \bg \big(t, \overline{\bx}(t) \big) \Big) - \bff_x \Big(t, \overline{\bx}(t), \overline{\bg} \big(t, \overline{\bx}(t) \big) \Big) \Big\|  \nonumber \\
     	&\leq L_f^1  L_h \, \| \bx(t) - \overline{\bx}(t) \| + L_f^1 \, \big\| \bg\big(t, \overline{\bx}(t)\big) - \overline{\bg}\big(t, \overline{\bx}(t) \big) \big\| 
	< (L_f^1  L_h + L_f^1) \, \delta. 
     \end{align}
     Next, we use \cref{eq:Lipschitz_f'}, \cref{eq:bound_f'}, \cref{eq:Lipschitz_g_x}, \cref{eq:Lipschitz_h}, and 
     $\| \bg_x(t, x) \| \le  \| \bg \|_{(\cG^1(I))^{n_g}} \le R_\cG$ to 
      bound $S_2[t]$ in \cref{eq:S1-3} for almost all $t \in I$:
     \begin{equation}
     \begin{aligned}
     	S_2[t] &= \Big\| \bff_g \Big(t, \bx(t), \bg \big(t,  \bx(t) \big) \Big) \bg_x \big(\bx(t) \big) - \bff_g \Big(t, \overline{\bx}(t), \overline{\bg} \big(t,  \overline{\bx}(t) \big) \Big) \overline{\bg}_x \big(t,  \overline{\bx}(t) \big) \Big\| \\
     	&\leq \Big\| \bff_g \Big(t, \bx(t), \bg \big(t,  \bx(t) \big) \Big) \bg_x \big(t, \bx(t) \big) - \bff_g \Big(t, \overline{\bx}(t), \bg \big(t, \overline{\bx}(t) \big) \Big) \bg_x \big(t, \bx(t) \big) \Big\| \\
     	&\quad + \Big\| \bff_g \Big(t, \overline{\bx}(t), \bg \big(t,  \overline{\bx}(t) \big) \Big) \bg_x \big(t, \bx(t) \big) - \bff_g \Big(t, \overline{\bx}(t), \overline{\bg} \big(t,  \overline{\bx}(t) \big) \Big) \bg_x \big(t, \bx(t) \big) \Big\| \\
     	&\quad + \Big\| \bff_g \Big(t, \overline{\bx}(t), \overline{\bg} \big(t,  \overline{\bx}(t) \big) \Big) \bg_x \big(t,  \bx(t) \big) - \bff_g \Big(t, \overline{\bx}(t), \overline{\bg} \big(t,  \overline{\bx}(t) \big) \Big) \bg_x \big(t,  \overline{\bx}(t) \big) \Big\| \\
     	&\quad + \Big\| \bff_g \Big(t, \overline{\bx}(t), \overline{\bg} \big(t,  \overline{\bx}(t) \big) \Big) \bg_x \big(t,  \overline{\bx}(t) \big) - \bff_g \Big(t, \overline{\bx}(t), \overline{\bg} \big(t,  \overline{\bx}(t) \big) \Big) \overline{\bg}_x \big(t,  \overline{\bx}(t) \big) \Big\| \\
     	&\leq L_f^1  L_h  R_\cG \, \| \bx(t) - \overline{\bx}(t) \| 
	         + L_f^1  R_\cG \, \big\| \bg\big(t,  \overline{\bx}(t) \big) - \overline{\bg} \big( t, \overline{\bx}(t) \big) \big\| \\ 
	&\quad + R_f  L_g^1 \, \| \bx(t) - \overline{\bx}(t) \| + R_f \, \big\| \bg_x\big(t, \overline{\bx}(t) \big) - \overline{\bg}_x\big( t, \overline{\bx}(t) \big) \big\| \\
     	&< (L_f^1  L_h  R_\cG + L_f^1  R_\cG + R_f  L_g^1 + R_f) \delta. \label{eq:S_2[t]_bound}
     \end{aligned}
     \end{equation} 
      To obtain a bound on the term $S_3[t]$ in \cref{eq:S1-3}, we first note that if $\| \delta \bg \|_{(\cG^2(I))^{n_g} } = 1$, then $\delta \bg$ is Lipschitz continuous in $x$ with Lipschitz constant $1$ due to the boundedness properties of $\delta \bg_x$:
     \begin{equation} \label{eq:Lipschitz_delta_g}
     	\| \delta \bg(t, x_1) - \delta \bg(t, x_2) \| \leq \| x_1 - x_2 \|, \quad \aall t \in I  \mbox{ and all }  x_1, x_2 \in \real^{n_x}.
  \end{equation}
    Using \cref{eq:Lipschitz_f'}, \cref{eq:bound_f'}, \cref{eq:Lipschitz_h}, and \cref{eq:Lipschitz_delta_g} we obtain the bound
     \begin{equation}
     \begin{aligned}
     	S_3[t] &= \Big\| \bff_g \Big(t, \bx(t), \bg \big(t, \bx(t) \big) \Big) \delta \bg \big(t, \bx(t) \big) - \bff_g \Big(t, \overline{\bx}(t), \overline{\bg} \big(t, \overline{\bx}(t) \big) \Big) \delta \bg \big(t, \overline{\bx}(t) \big) \Big\| \\
     	&\leq \Big\| \bff_g \Big(t, \bx(t), \bg \big(t, \bx(t) \big) \Big) \delta \bg \big(t, \bx(t) \big) - \bff_g \Big(t, \bx(t), \bg \big(t, \bx(t) \big) \Big) \delta \bg \big(t, \overline{\bx}(t) \big) \Big\| \\
     	&\quad + \Big\| \bff_g \Big(t, \bx(t), \bg \big(t, \bx(t) \big) \Big) \delta \bg \big(t, \overline{\bx}(t) \big) - \bff_g \Big(t, \overline{\bx}(t), \bg \big(t, \overline{\bx}(t) \big) \Big) \delta \bg \big(t, \overline{\bx}(t) \big) \Big\| \\
     	&\quad + \Big\| \bff_g \Big(t, \overline{\bx}(t), \bg \big(t, \overline{\bx}(t) \big) \Big) \delta \bg \big(t, \overline{\bx}(t) \big) - \bff_g \Big(t, \overline{\bx}(t), \overline{\bg} \big(t, \overline{\bx}(t) \big) \Big) \delta \bg \big(t, \overline{\bx}(t) \big)  \Big\| \\
     	&\leq R_f \, \| \bx(t) - \overline{\bx}(t) \| + L_f^1  L_h \, \| \bx(t) - \overline{\bx}(t) \| + L_f^1 \, \big\| \bg\big(t,  \overline{\bx}(t) \big) - \overline{\bg}\big(t,  \overline{\bx}(t) \big) \big\| \\
     	&< (R_f + L_f^1  L_h + L_f^1) \, \delta \label{eq:S_3[t]_bound}
     \end{aligned}
     \end{equation}
     for any $\delta \bg$ satisfying $\| \delta \bg \|_{(\cG^2(I))^{n_g} } = 1$ and almost all $t \in I$.
     
     Inserting \cref{eq:S_1[t]_bound}, \cref{eq:S_2[t]_bound}, and \cref{eq:S_3[t]_bound} into \cref{eq:S1-3} yields
     \begin{equation*}
     	\| \BF_2'(\bx, \bg) - \BF_2'(\overline{\bx}, \overline{\bg}) \|_{\cL\big( (L^\infty(I))^{n_x} \times (\cG^2(I))^{n_g},\LL\big)}
     	< C \delta
     \end{equation*}
     where
     $C = (L_f^1  L_h + L_f^1) + (L_f^1  L_h  R_\cG + L_f^1  R_\cG + R_f  L_g^1 + R_f) + (R_f + L_f^1  L_h + L_f^1)$.
          Therefore, for any $\epsilon > 0$, taking $\delta = \epsilon/C$ ensures that 
     $\| \bx - \overline{\bx} \|_{(L^\infty(I))^{n_x}}+ \| \bg - \overline{\bg} \|_{(\cG^2(I))^{n_g} } < \delta$ implies 
    \[
          \| \BF_2'(\bx, \bg) - \BF_2'(\overline{\bx}, \overline{\bg}) \|_{\cL\big( (L^\infty(I))^{n_x} \times (\cG^2(I))^{n_g}, (L^\infty(I))^{n_x}\big)}  < C \delta = \epsilon.
      \]
     Since $(\overline{\bx}, \overline{\bg}) \in \big(L^\infty(I)\big)^{n_x} \times \big(\cG^2(I)\big)^{n_g}$ was arbitrary, $\BF_2$ is continuously Fr\'echet differentiable
     on $\big(L^\infty(I)\big)^{n_x} \times \big(\cG^2(I)\big)^{n_g}$. Furthermore, since $\delta$ depends linearly on $\epsilon$, we have shown that the Fr\'echet derivative is locally Lipschitz continuous (but not globally since $C$ depends on $R_{L^\infty}$ and $R_\cG$, which depend on $\bx, \overline{\bx}$ and $\bg, \overline{\bg}$).
\hfill $\Box$
 
\end{appendix}


\newpage

\setcounter{section}{0}
\renewcommand\thesection{SM\arabic{section}}

\setcounter{figure}{0}

\setcounter{page}{1}
\renewcommand{\thepage}{SM.\arabic{page}}

\begin{center}
{\bf SUPPLEMENTARY MATERIALS: SENSITIVITY OF ODE SOLUTIONS AND QUANTITIES OF
INTEREST WITH RESPECT TO COMPONENT FUNCTIONS IN THE DYNAMICS } 

\medskip
JONATHAN R.\ CANGELOSI AND MATTHIAS HEINKENSCHLOSS 

\medskip
\end{center}

In this supplement we present numerical results for two other ODE systems:
Zermelo's problem in \cref{sec:numerics_zermelo} and the 
Hodgkin-Huxley model describing the membrane excitability of the squid giant axon 
in \cref{sec:numerics_hh}.

\section{Zermelo's Problem} \label{sec:numerics_zermelo}
We present numerical results for Zermelo's problem. 
The Zermelo problem models the trajectory of a boat moving downstream through a river with a current whose strength depends on the boat's position. 
We consider a particular instance of the Zermelo problem where the strength of the current depends on a function $\bg$ of the boat's horizontal position:
\begin{equation} \label{eq:numerics_zermelo:zermelo_ODE}
\begin{aligned}
	\bx_1'(t) &= \cos \bu(t) + \bg\big(\bx_1(t)\big) \bx_2(t), & t \in (0, 1), \\
	\bx_2'(t) &= \sin  \bu(t), & t \in (0, 1), \\
	\bx_1(0) &= \bx_2(0) = 0,
\end{aligned}
\end{equation}
where time $t$ is assumed dimensionless, the state $\bx(t) = \big(\bx_1(t), \bx_2(t)\big)$ is the boat's position (also dimensionless), and $\bu(t)$ is the boat's heading angle (in radians), which is a given input.
For this example we use $\bu(t) = (1 - 2t) \pi / 3$.
We suppose that the ``true'' function $\bg_*$ is given by
\[
	\bg_*(x_1) = 2 + 10x_1 - (x_1 - 2)^3.
\]

The goal is to solve \eqref{eq:numerics_zermelo:zermelo_ODE} with $\bg = \bg_*$, but to illustrate our error estimate we instead
solve \eqref{eq:numerics_zermelo:zermelo_ODE} using an approximation $\widehat{\bg}$ of $\bg_*$ given by
\[
	\widehat{\bg}(x_1) = 2 + 10x_1 - (1 - \epsilon) (x_1 - 2)^3
\]
where $\epsilon > 0$ is small. Accordingly, $\widehat{\bg}$ may be regarded as a perturbation of $\bg_*$ by 
\begin{equation} \label{eq:delta-g-zermelo}
	\delta \bg(x_1) = \widehat{\bg}(x_1) - \bg_*(x_1) = \epsilon (x_1 - 2)^3.
\end{equation}
The perturbed trajectory $\widehat{\bx}$ and the true trajectory $\bx_*$ are shown in \cref{fig:zermelo-trajectories} for $\epsilon = 0.1$.
\begin{figure}[!htb]
\centering
\includegraphics[width=0.5\textwidth]{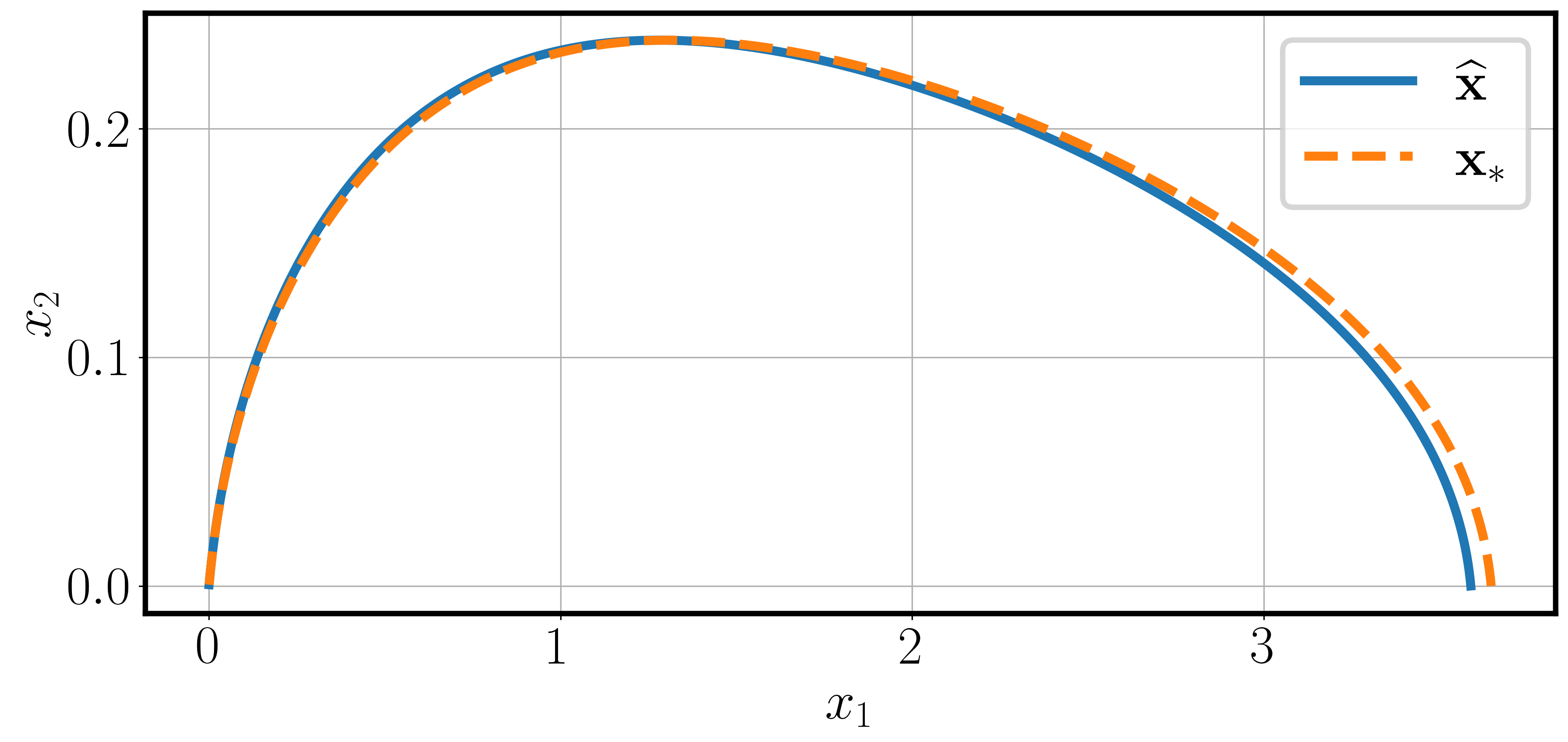}
\vspace*{-2ex}
\renewcommand{\thefigure}{SM.1}
\caption{Solutions to the true and perturbed Zermelo ODE for $\epsilon = 0.1$.}
\label{fig:zermelo-trajectories}
\end{figure}

Next, we compute the Gronwall error bound and the sensitivity-based approximate error bound.
To obtain a Lipschitz constant $L$ that satisfies \eqref{eq:uniformly-lipschitz} in the assumptions of \cref{thm:error-estimate-comparison-lemma}, we first observe from \cref{fig:zermelo-trajectories} that $R_{L^\infty} := \| \widehat{\bx} \|_{(L^\infty(I))^{n_x}} \leq 4$, which implies
\[
	\| \bff(t, x, g_1) - \bff(t, x, g_2) \|_2
	= \left\| \begin{bmatrix} (g_1 - g_2) x_2 \\ 0 \end{bmatrix} \right\|_2
	 \leq 4 \, | g_1 - g_2 | \quad \mbox{ for all } t \in I, \; x \in \cB_{R_{L^\infty}}(0), \; g_1, g_2 \in \real.
\]
Thus, $L = 4$ satisfies \eqref{eq:uniformly-lipschitz} in this example.

For the sensitivity-based bound we use
\begin{equation} \label{eq:model-error-bound-zermelo}
	\bepsilon\big(t, \widehat{\bx}(t)\big) := \big| \delta \bg \big( \widehat{\bx}_1(t) \big) \big|
\end{equation}
 in \eqref{eq:Refinement:ODE:LQOCP},
i.e., we set the model error bound equal to the absolute model error along the nominal (perturbed) trajectory.
The results for the two error bounds are given in the left plot in \cref{fig:error-estimates-zermelo}. 
\begin{figure}[!htb]
\includegraphics[width=0.49\textwidth]{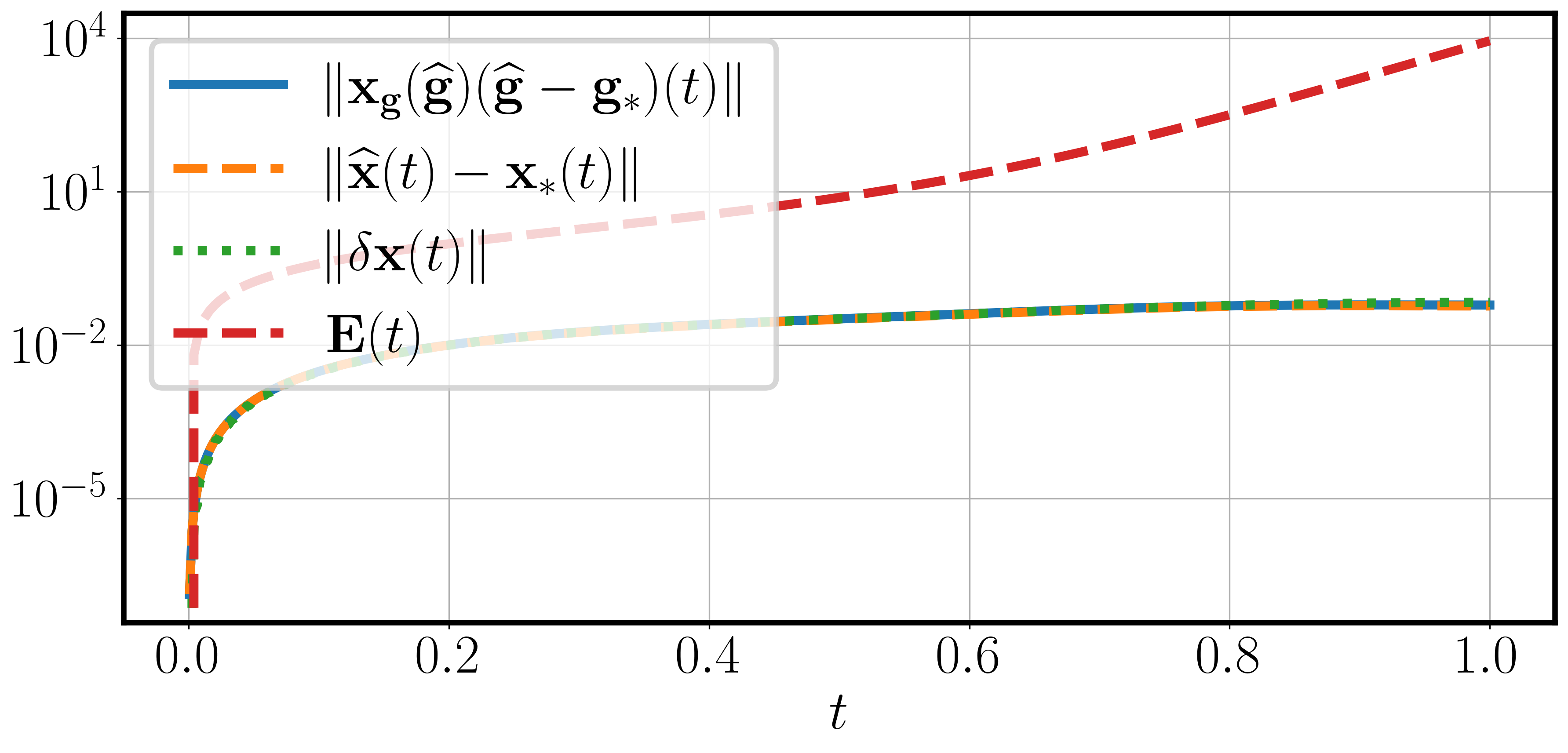}
\includegraphics[width=0.49\textwidth]{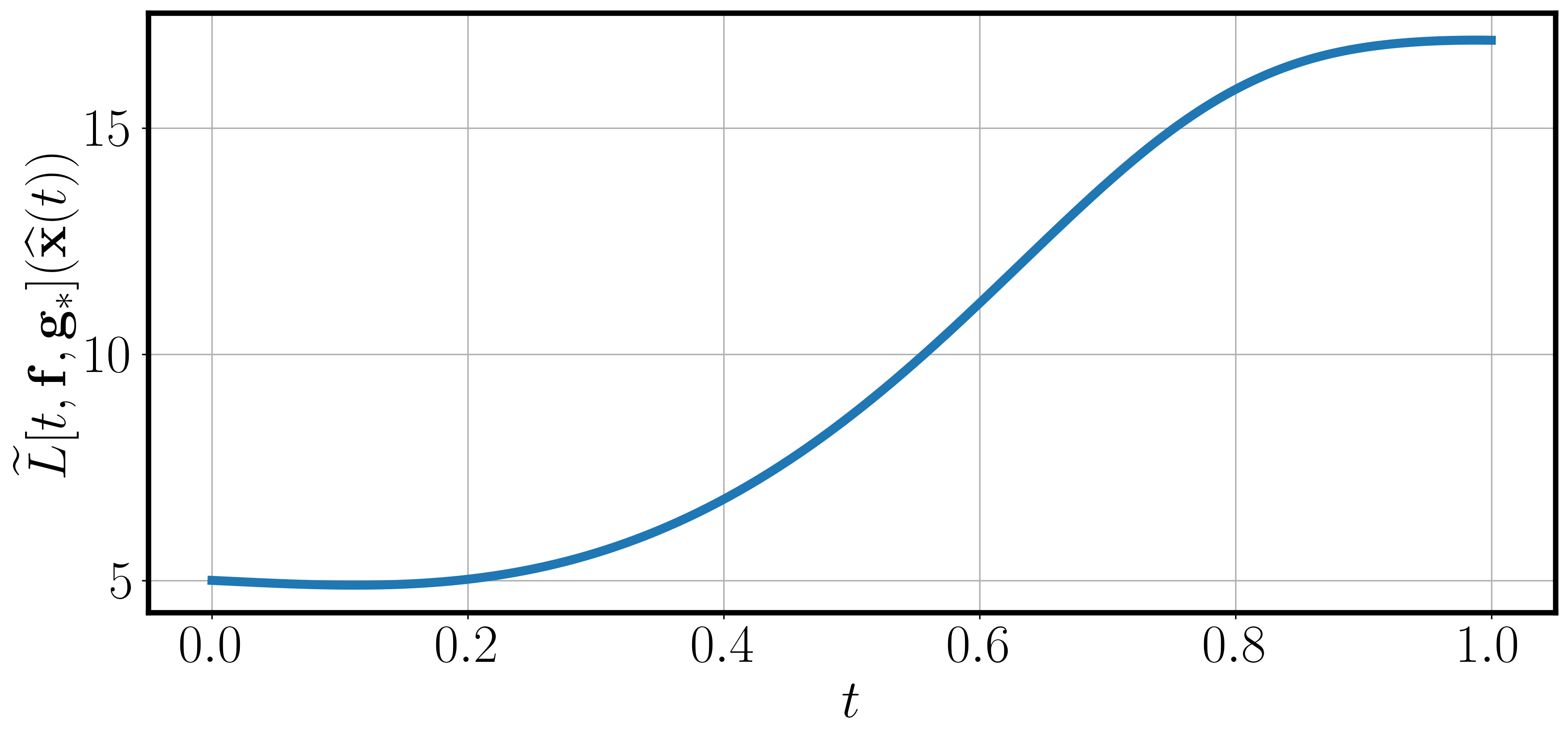}

\renewcommand{\thefigure}{SM.2}
\caption{Left: Sensitivity-based trajectory error estimate (blue) and trajectory error (orange) in good agreement, and trajectory error bound (green) yields a much tighter bound on the trajectory error than the Gronwall-type error bound (red).
              Right: Approximate logarithmic Lipschitz constant along perturbed trajectory for Zermelo ODE.}
              \label{fig:error-estimates-zermelo}
\end{figure}
The trajectory error (in the 2-norm) is also displayed for comparison. 
The sensitivity-based trajectory error bound $\| \delta \bx(t) \|$ from \cref{thm:state-error-bound}
yields a much tighter bound on the trajectory error $\| \widehat{\bx}(t) -  \bx_*(t) \|$  than the Gronwall-type error bound \eqref{eq:error-estimate-comparison-lemma}.
The reason for the pessimistic Gronwall-type error bound is that the approximate logarithmic Lipschitz constant evaluated along the trajectory, i.e.,
$\widetilde{L}[t, \bff, \bg_*]\big( \widehat{\bx}(t) \big)$ where $\widetilde{L}$ is as defined in \eqref{eq:logarithmic-norm-local-deriv} with respect to the 2-norm, 
is positive; see the right plot in \cref{fig:error-estimates-zermelo}.

Note that \cref{thm:state-error-bound} gives an upper bound on $\| \widehat{\bx} - \bx_* \|_{(L^2(I))^{n_x}}$, 
not a pointwise upper bound on $\| \widehat{\bx}(t) - \bx_*(t) \|$ for $t \in I$, so some care is needed in interpreting the results of \cref{fig:error-estimates-zermelo}. Still, it is useful to compare $\| \delta \bx(t) \|$ with $\| \widehat{\bx}(t) - \bx_*(t) \|$ pointwise to see how the worst-case perturbation of the ODE solution 
based on the model error bound \eqref{eq:g-error-vec} compares to the observed perturbation in the ODE solution. In this case, $\| \delta \bx(t) \|$ turns out to be a tight upper bound of $\| \widehat{\bx}(t) - \bx_*(t) \|$.

The left plot in \cref{fig:zermelo-epsilon-study} shows the effect of the perturbation parameter $\epsilon$ in \eqref{eq:delta-g-zermelo} on the $L^2$-error of the 
trajectory and the sensitivity-based estimate of the trajectory error, as well as the upper bound of \cref{thm:state-error-bound}.

These results show that the sensitivity-based estimate of the trajectory error is close to the actual trajectory error and the sensitivity-based upper bound is tight
for a wide range of perturbation parameters $\epsilon$.
Note that the bound is tight in this example because we set the bounds $\bepsilon$ in \eqref{eq:Refinement:ODE:LQOCP} equal to the absolute model error, as seen in \eqref{eq:model-error-bound-zermelo}.
Relaxing $\bepsilon$ would result in a looser bound.

\begin{figure}[!htb]
\includegraphics[width=0.49\textwidth]{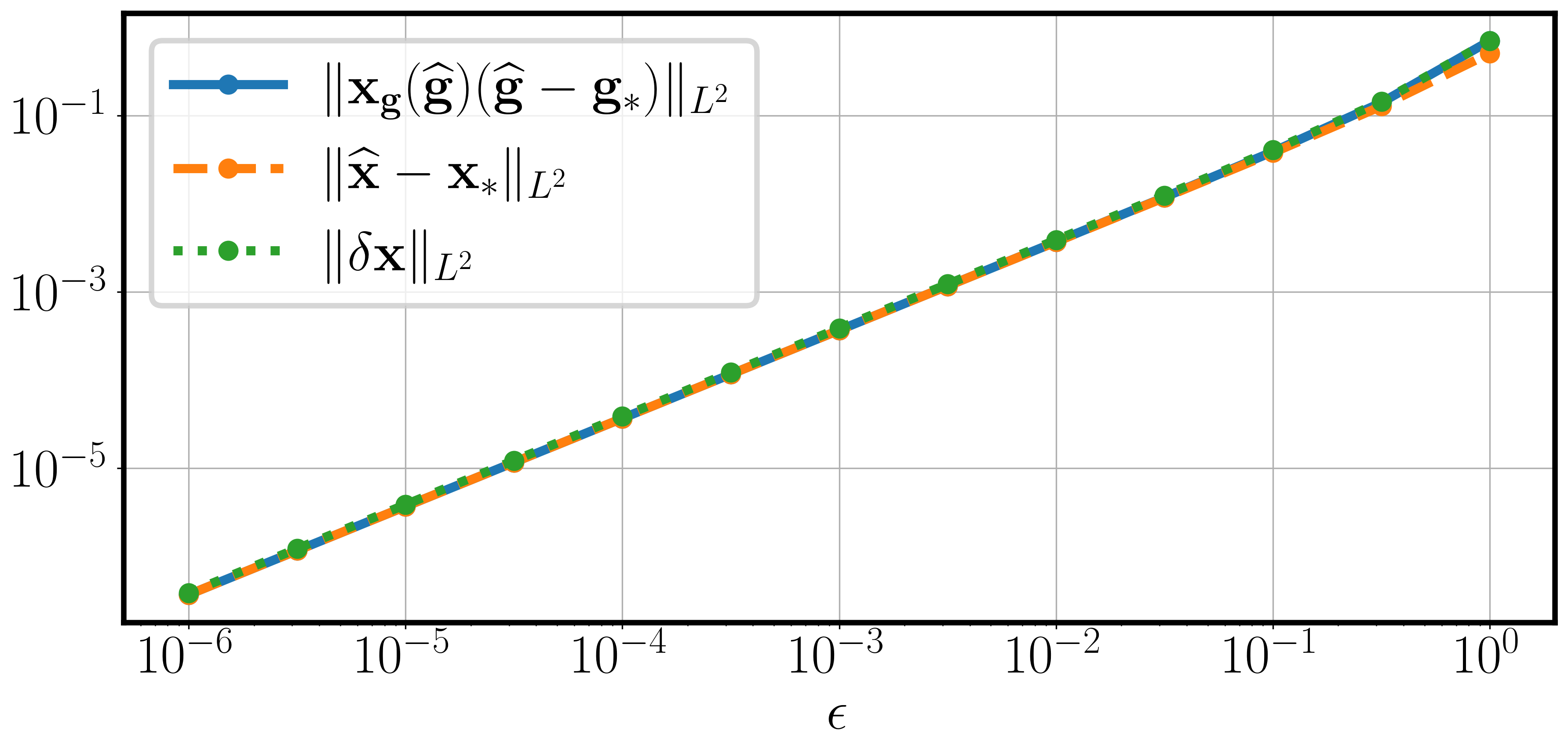} \hfill
\includegraphics[width=0.49\textwidth]{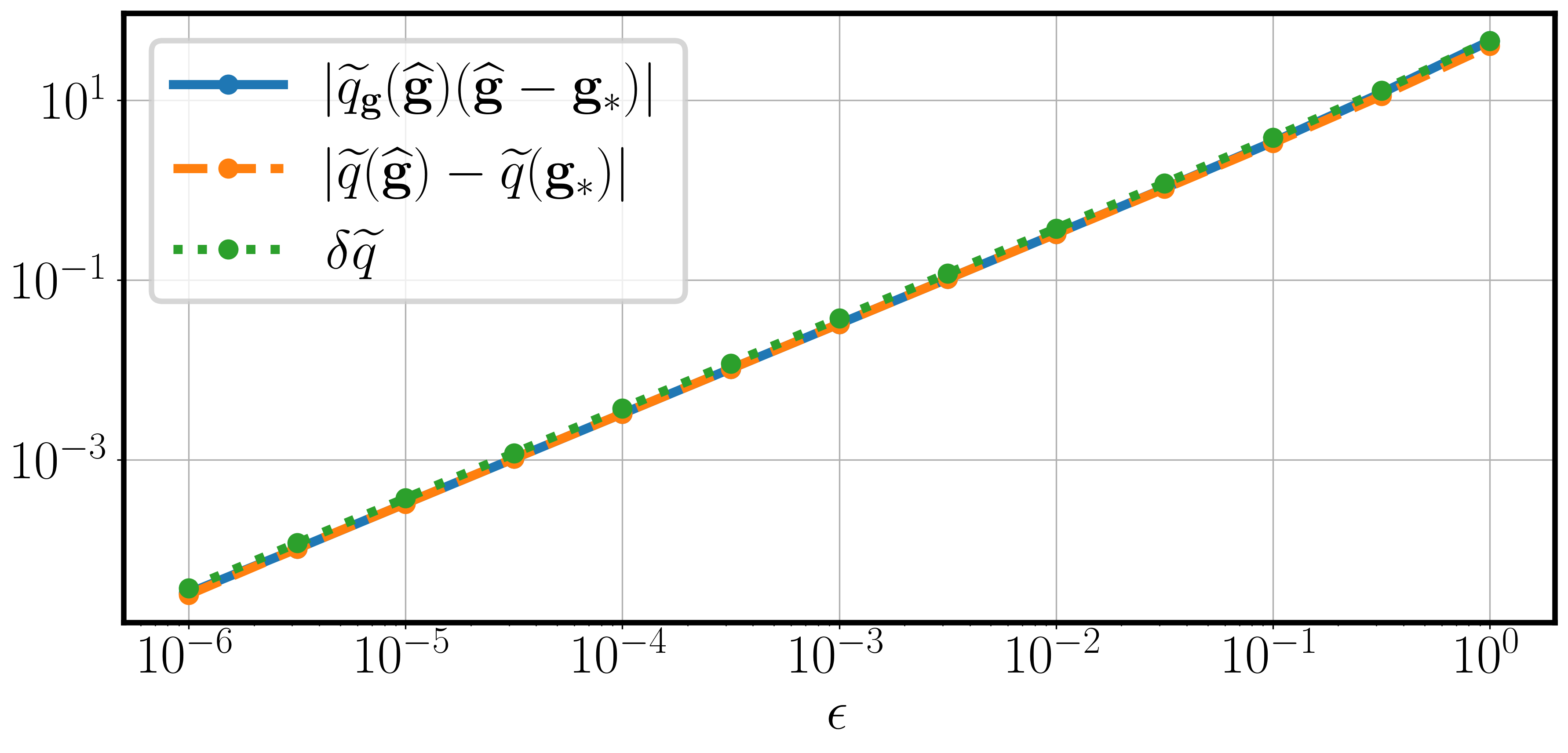}

\vspace*{-2ex}
\renewcommand{\thefigure}{SM.3}
\caption{Left: Strong agreement between the $L^2$ trajectory error estimates (blue), trajectory errors (orange), and sensitivity-based error bounds (green) for Zermelo ODE.
 Right: Strong agreement between the QoI error estimates (blue), QoI errors (orange), and error bounds (green) for Zermelo ODE.} 
 \label{fig:zermelo-epsilon-study}
\end{figure}

Recall that the LCQP obtained from discretization of \eqref{eq:Refinement:ODE:LQOCP} is a convex maximization problem, 
which is NP-hard.
However, \cref{fig:zermelo-epsilon-study} shows that when using our tailored interior point method, 
the computed upper bound is consistently very close to the sensitivity-based estimate, so the error bound based on
\eqref{eq:Refinement:ODE:LQOCP}  is still useful in practice.

Next, we consider the sensitivity of a quantity of interest.  We consider the total distance traveled,
\[
	\widetilde{q}(\bg) = \int_0^1 \sqrt{\bx_1'(t)^2 + \bx_2'(t)^2} \, dt,
\]
where $\bx_1', \bx_2'$ are as in \eqref{eq:numerics_zermelo:zermelo_ODE}.

The true QoI error $| \widetilde{q}(\widehat{\bg}) - \widetilde{q}(\bg_*) |$, the sensitivity-based estimate $| \widetilde{q}_\bg(\widehat{\bg}) (\widehat{\bg} - \bg_*) |$, 
and the sensitivity-based upper bound 
$\delta \widetilde{q} := \int_{t_0}^{t_f}  \big|  \widehat{\BB}(t)^T \widehat{\blambda}(t) + \nabla_g l \big( t, \widehat{\bx}(t), \widehat{\bg} \big( t, \widehat{\bx}(t) \big) \big)  \big|^T  \bepsilon\big( t, \widehat{\bx}(t) \big) \, dt$ of \cref{thm:QoI-error-bound} were computed for 
several values of the perturbation parameter $\epsilon$ in \eqref{eq:delta-g-zermelo} and are shown in the right plot in
\cref{fig:zermelo-epsilon-study}. All three quantities are in strong agreement for a wide range of perturbation parameters $\epsilon$.

\section{Hodgkin-Huxley Model} \label{sec:numerics_hh}
In this section, we present the numerical results for the Hodgkin-Huxley model describing the membrane excitability of the squid giant axon 
 \cite{ALHodgkin_AFHuxley_1952a}.
The states are the membrane voltage $V \textrm{ [mV]}$,
a displacement of the membrane potential $\textrm{[mV]}$  from its resting value of $V_{\rm rest}  = -65 \textrm{ [mV]}$,
and gating variables $n, m, h$, i.e.,
\[
	\bx(t) = \big( \BV(t), \bn(t), \bm(t), \bh(t) \big).
\]
The current equations for the one-compartment Hodgkin-Huxley model are
\begin{subequations} \label{eq:HH_model}
    \begin{align}
            C_M \BV'(t) &= -g_\textrm{ \!K} \bn(t)^4 \big(\BV(t) - E_\textrm{ \!K}\big) - g_{\textrm{ \!Na}} \bm(t)^3 \bh(t) \big(\BV(t) - E_{\textrm{ \!Na}} \big) \nonumber \\
            &\quad - g_\textrm{ \!L} \big(\BV(t) - E_\textrm{ \!L} \big) + \BI(t), & t \in (t_0, t_f), \\
        \bn'(t) &= \alpha_n\big(\BV(t)\big)\big(1-\bn(t)\big)-\beta_n\big(\BV(t)\big) \bn(t), & t \in (t_0, t_f), \label{eq:n_equation} \\
        \bm'(t) &= \alpha_m\big(\BV(t)\big)\big(1-\bm(t)\big)-\beta_m\big(\BV(t)\big) \bm(t), & t \in (t_0, t_f), \label{eq:m_equation} \\
        \bh'(t) &= \alpha_h\big(\BV(t)\big)\big(1-\bh(t)\big)-\beta_h\big(\BV(t)\big) \bh(t), & t \in (t_0, t_f), \label{eq:h_equation} \\
        \BV(t_0) &= V_{\rm rest}  = -65, \quad
        \bn(t_0) = 0.317, \quad
        \bm(t_0) = 0.052, \quad
        \bh(t_0) = 0.596,
    \end{align}
\end{subequations}
where the total membrane current $\BI(t)$ $[\mu$A/cm$^2$] is a given input, 
the model parameters are specified in \cref{tab:Table_parameters_HHmodel_def}, and the
functions $\alpha_y, \beta_y$, $y \in \{ n, m, h\}$ are given by
\begin{subequations}    \label{eq:HH_model-alpha-beta}
    \begin{align}
        \alpha_n(V)&=\frac{0.01\big( 10 - (V - V_{\rm rest}) \big)}{\exp\big( \big( 10 - (V - V_{\rm rest}) \big)/10\big) - 1}, &
         \beta_n(V)&=0.125\exp\big( -(V - V_{\rm rest})/80\big),\\
        \alpha_m(V)&=\frac{0.1\big(25-(V - V_{\rm rest})\big)}{\exp\big(\big(25-(V - V_{\rm rest})\big)/10 \big) - 1}, &
        \beta_m(V)&=4\exp\big( -(V - V_{\rm rest}) /18\big),\\
        \alpha_h(V)&=0.07\exp\big(- (V - V_{\rm rest})/20\big), &
        \beta_h(V)&= \frac{1}{\exp\big(\big(30 - (V - V_{\rm rest}) \big)/10\big) + 1 }.
     \end{align}
\end{subequations}
The initial data $\bn(t_0)$, $\bm(t_0)$, $\bh(t_0)$ in (\ref{eq:HH_model}e) are computed as steady-state solutions of equations
(\ref{eq:HH_model}b-d) with $V = V_{\rm rest} = -65  \textrm{ [mV]}$, i.e.,
\[
     y(t_0) = \frac{ \alpha_y(V_{\rm rest}) }{  \alpha_y(V_{\rm rest}) + \beta_y(V_{\rm rest}) }, \quad y \in \{ n, m, h \}.
\]
(The values of $\bn(t_0)$, $\bm(t_0)$, $\bh(t_0)$ are shown to three decimal places in  (\ref{eq:HH_model}e).)
We consider the time interval $(t_0, t_f) = (0, 50)$ [ms] and use a total membrane current $\BI(t) = 10$ $[\mu$A/cm$^2$].
\begin{table}[!htb]
        \centering
        \begin{tabular}{c|c|c}
                      & Parameter & Value \\   \hline
            Resting potential  &  $V_{\rm rest}   \textrm{ [mV]}$ & -65 \\ \hline
            Membrane Capacitance  & $C_M \textrm{ [$\mu$F/cm$^2$]}$ & 1 \\   \hline
            Sodium reversal potential & $E_\textrm{ \!Na} \textrm{ [mV]}$ &  50 \\    \hline
            Potassium reversal potential  & $E_\textrm{ \!K} \textrm{ [mV]}$ & -77 \\ \hline
            Leak reversal potential & $E_\textrm{ \!L} \textrm{ [mV]}$ & -54.387 \\  \hline
            Maximal sodium conductance  & $g_\textrm{ \!Na} \textrm{ [mS/cm$^2$]}$ & 120 \\ \hline
            Maximal potassium conductance & $g_\textrm{ \!K} \textrm{ [mS/cm$^2$]}$ & 36 \\  \hline
            Maximal leak conductance & $g_\textrm{ \!L} \textrm{ [mS/cm$^2$]}$ & 0.3 \\  
        \end{tabular}
        \renewcommand{\thetable}{SM.1}
        \caption{Parameter values used in model \eqref{eq:HH_model}.}
    \label{tab:Table_parameters_HHmodel_def}
\end{table}

We view the six voltage-dependent component functions \eqref{eq:HH_model-alpha-beta} with the parameter values specified
in \cref{tab:Table_parameters_HHmodel_def} as our true component function, i.e.,
\[
	\bg_*\big(t, \bx(t) \big) 
	:= \big[
	\alpha_n\big(\BV(t)\big),\; 
	\alpha_m\big(\BV(t)\big),\; 
	\alpha_h\big(\BV(t)\big),\; 
	\beta_n\big(\BV(t)\big),\; 
	\beta_m\big(\BV(t)\big),\; 
	\beta_h\big(\BV(t)\big) \big].
\]

We set
\[
	\delta \bg(t, x) 
	:= \big[
	0.5 \alpha_n(V),\; 
	-0.2 \alpha_m(V),\; 
	-0.3\alpha_h(V),\; 
	0.5\beta_n(V),\; 
	-0.3\beta_m(V),\; 
	-0.2\beta_h(V) \big]
\]
and consider 
\[
     \widehat{\bg}(t, x) :=  \bg_*(t, x) + \epsilon \delta \bg(t, x). 
\]
The impact of such perturbations were studied in \cite{HOri_EMarder_SMarom_2018a} by evaluating the
solution of \eqref{eq:HH_model} for randomly generated perturbations.

\cref{fig:neuron-trajectories} shows the solutions of \eqref{eq:HH_model} using $\bg_*$ and using $\widehat{\bg}$ with $\epsilon = 0.1$. 
The voltage spikes at slightly different frequencies, and the gating variables also exhibit periodicity.
\begin{figure}[!htb]
\centering
\includegraphics[width=0.45\textwidth]{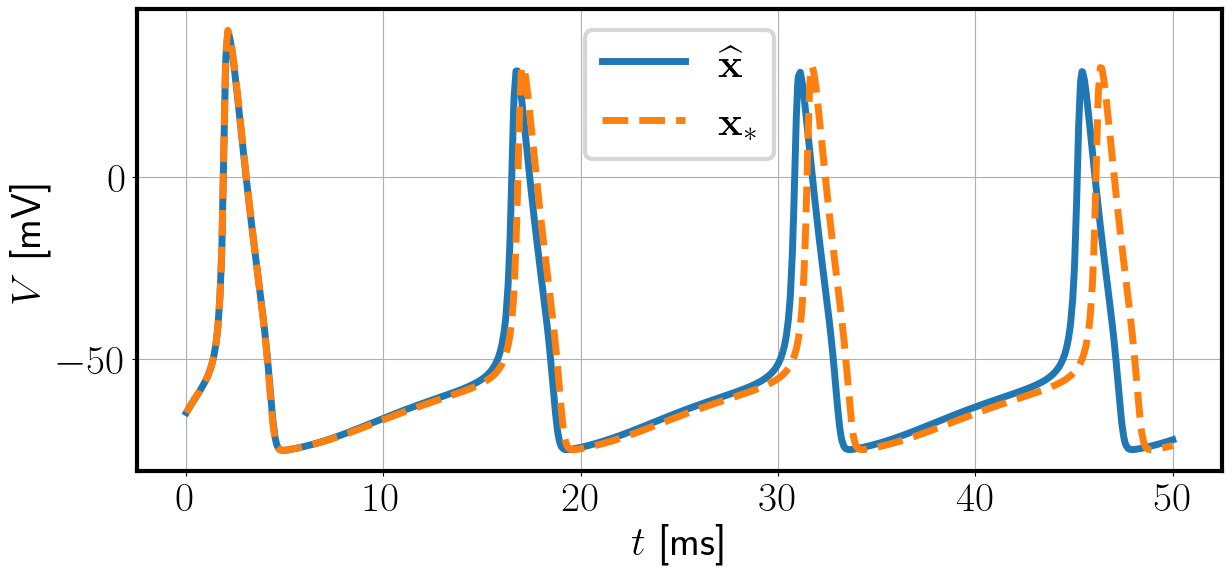}
\includegraphics[width=0.45\textwidth]{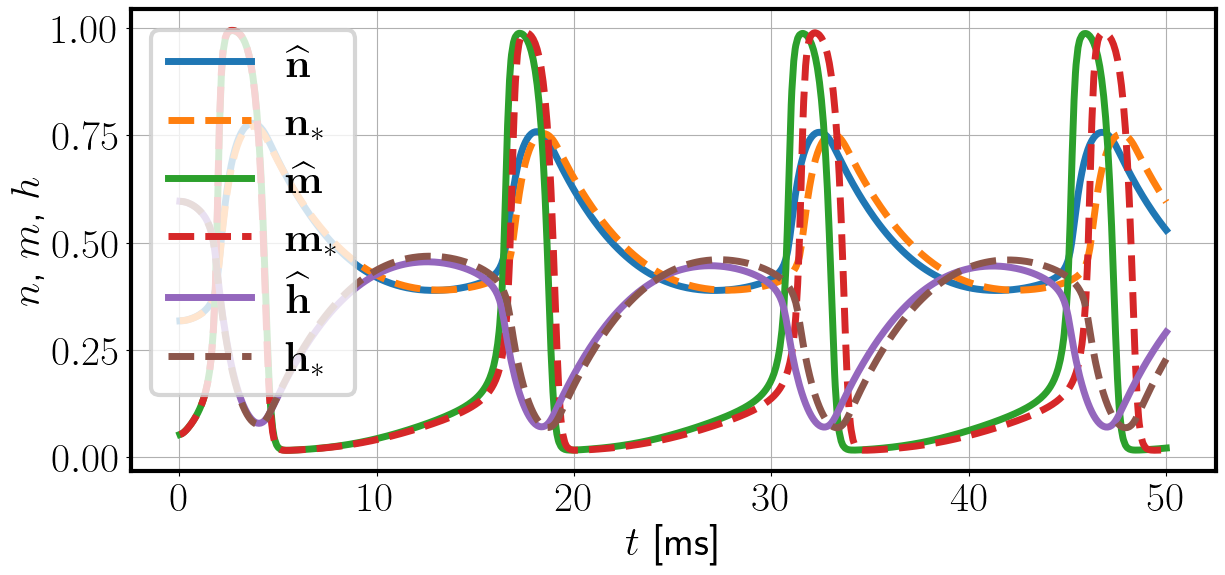}

\vspace*{-2ex}
\renewcommand{\thefigure}{SM.4}
\caption{Solutions to the true and perturbed Hodgkin-Huxley model for $\epsilon = 0.1$.} \label{fig:neuron-trajectories}
\end{figure}

The pointwise error bound is set to the absolute value of the perturbation (understood componentwise):
\[
	\bepsilon\big(t, \widehat{\bx}(t) \big) := \epsilon \big| \delta \bg\big(t, \widehat{\bx}(t) \big) \big|.
\]

The norm of the trajectory error and the worst-case trajectory error (in the $L^2$ sense, i.e, $\BQ(t)$ in \cref{thm:state-error-bound} equal to the identity matrix) obtained from the solution $(\delta \bx, \bdelta)$ of \eqref{eq:Refinement:ODE:LQOCP} 
as functions of $t$ are shown for $\epsilon = 0.1$ in \cref{fig:error-estimates-neuron}, along with the logarithmic Lipschitz constant 
$ \widetilde{L}[t,\bff,\bg_*](\widehat{\bx}(t))$ \eqref{eq:logarithmic-norm-local-deriv} 
computed along the trajectory $\widehat{\bx}$. 
\begin{figure}[!htb]
\includegraphics[width=0.49\textwidth]{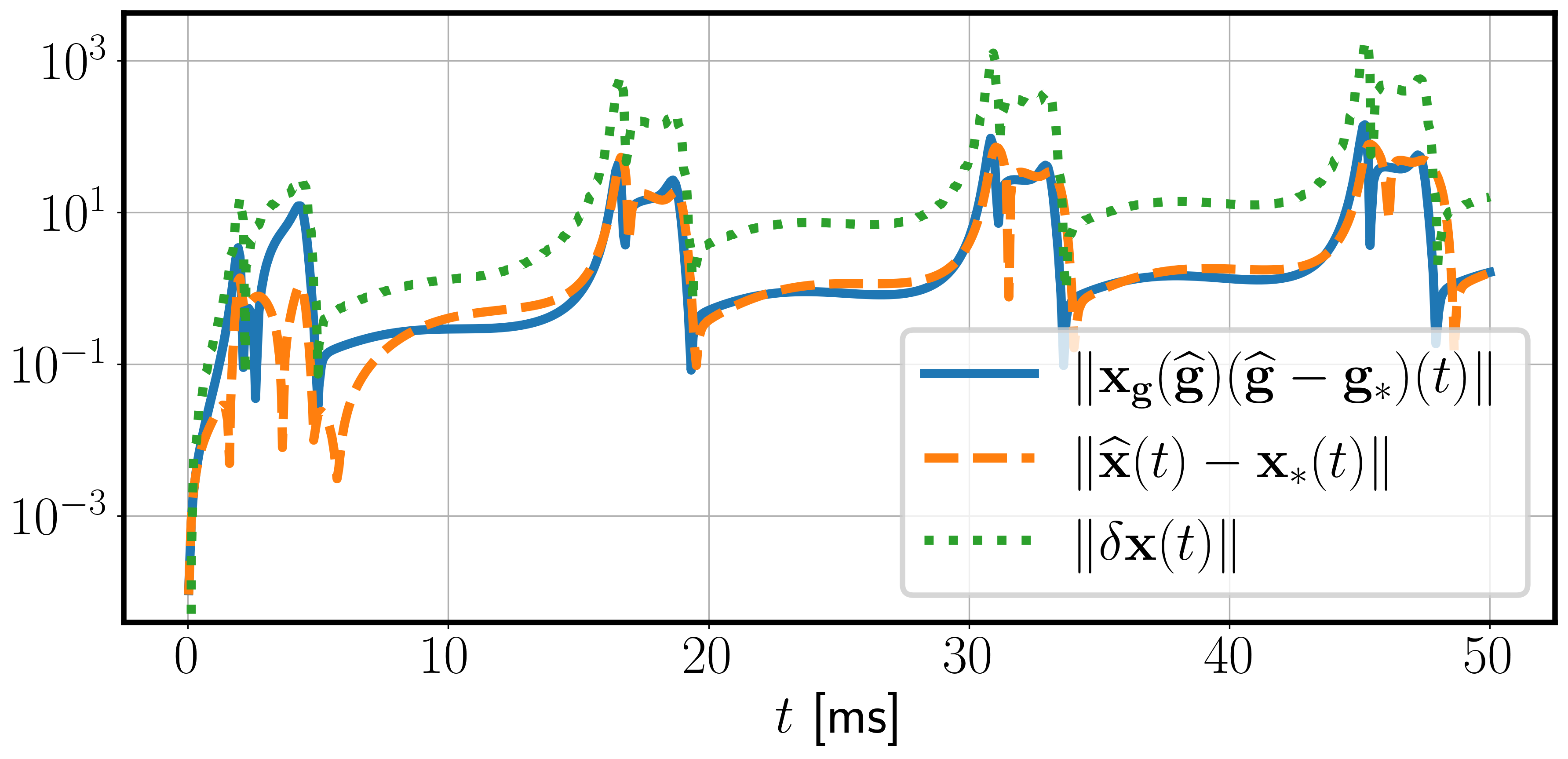}
\includegraphics[width=0.49\textwidth]{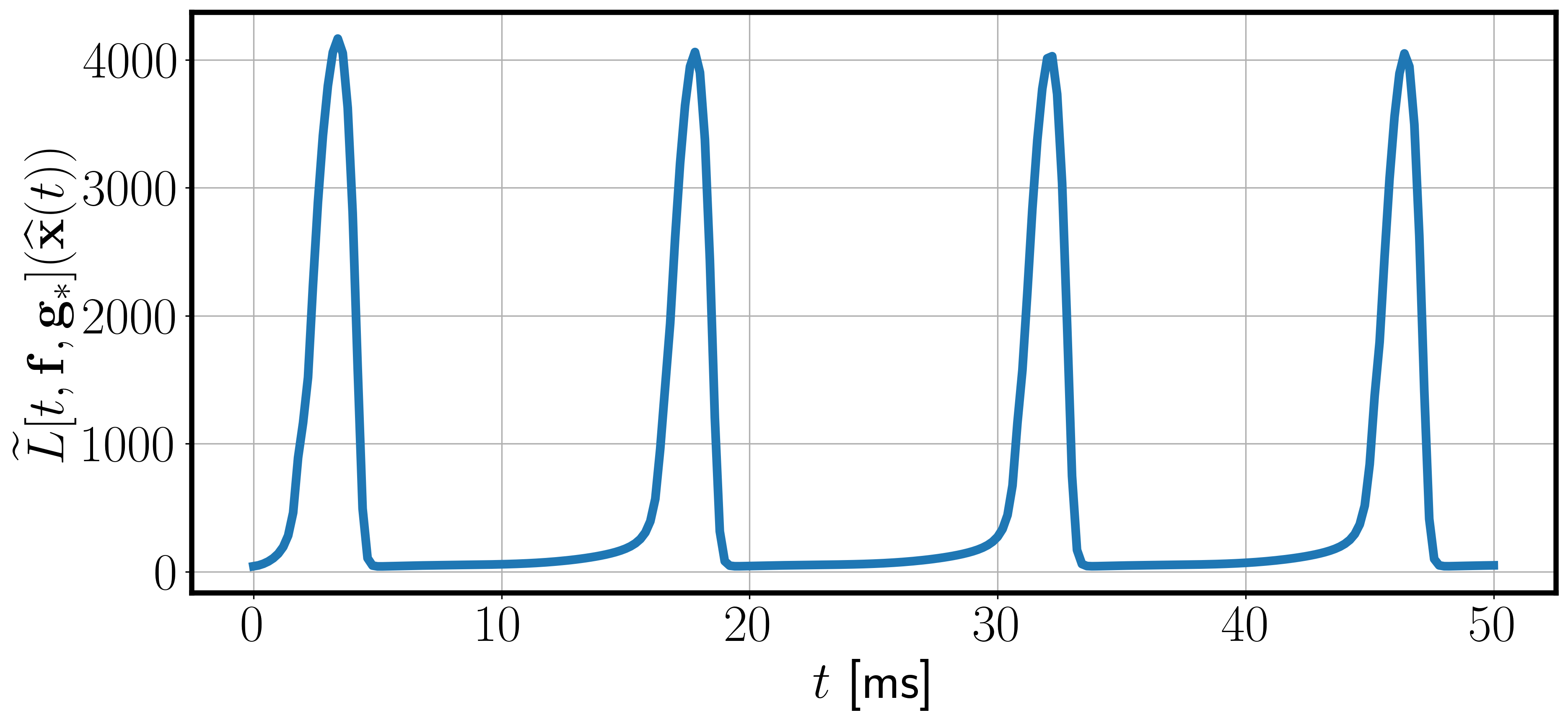}

\renewcommand{\thefigure}{SM.5}
\caption{Left: Sensitivity-based trajectory error estimate (blue) and trajectory error (orange) in good agreement, and trajectory error bound (green) yields a much tighter bound on the trajectory error than the Gronwall-type error bound (not shown due to size).
              Right: Approximate logarithmic Lipschitz constant along perturbed trajectory for Hodgkin-Huxley ODE.} 
              \label{fig:error-estimates-neuron}
\end{figure}
The logarithmic Lipschitz constant is large in time intervals that contain the voltage spikes, and as
a result, the error bound \eqref{eq:error-estimate-comparison-lemma} is even more pessimistic than in the
hypersonic vehicle trajectory simulation in \cref{sec:numerics_flap}. 
Therefore, the classical bound \eqref{eq:error-estimate-comparison-lemma} is not plotted in \cref{fig:error-estimates-neuron}.
In contrast, \cref{fig:error-estimates-neuron} shows that while our sensitivity-based error bound is less tight than it was in the previous examples, 
it still reflects the true error reasonably well, and
the sensitivity $\| \bx_\bg(\widehat{\bg})( \widehat{\bg} - \bg_*) \|$  is fairly close to the true error $\| \widehat{\bx} - \bx_* \|$.

It is observed that our sensitivity-based bound based on \eqref{eq:Refinement:ODE:LQOCP} is more conservative in this example than in the others.
This is explained by examining the optimal solution of \eqref{eq:Refinement:ODE:LQOCP}. For instance, the $\beta_m$-component of the  true model error $\widehat{\bg}\big(t, \widehat{\bx}(t)\big) - \bg_*\big(t, \widehat{\bx}(t)\big)$
(dotted green line) is equal to $- \bepsilon\big(t, \widehat{\bx}(t)\big)$ (dashed orange line), whereas the $\beta_m$-component of the solution $\bdelta$ of \eqref{eq:Refinement:ODE:LQOCP} 
(solid blue line) occasionally changes sign to maximize the objective in  \eqref{eq:Refinement:ODE:LQOCP}.
The $V$-component of the solution $\delta \bx$  of \eqref{eq:Refinement:ODE:LQOCP} is shown in \cref{fig:neuron-states}.

\begin{figure}[!htb]
\begin{center}
\includegraphics[width=0.60\textwidth]{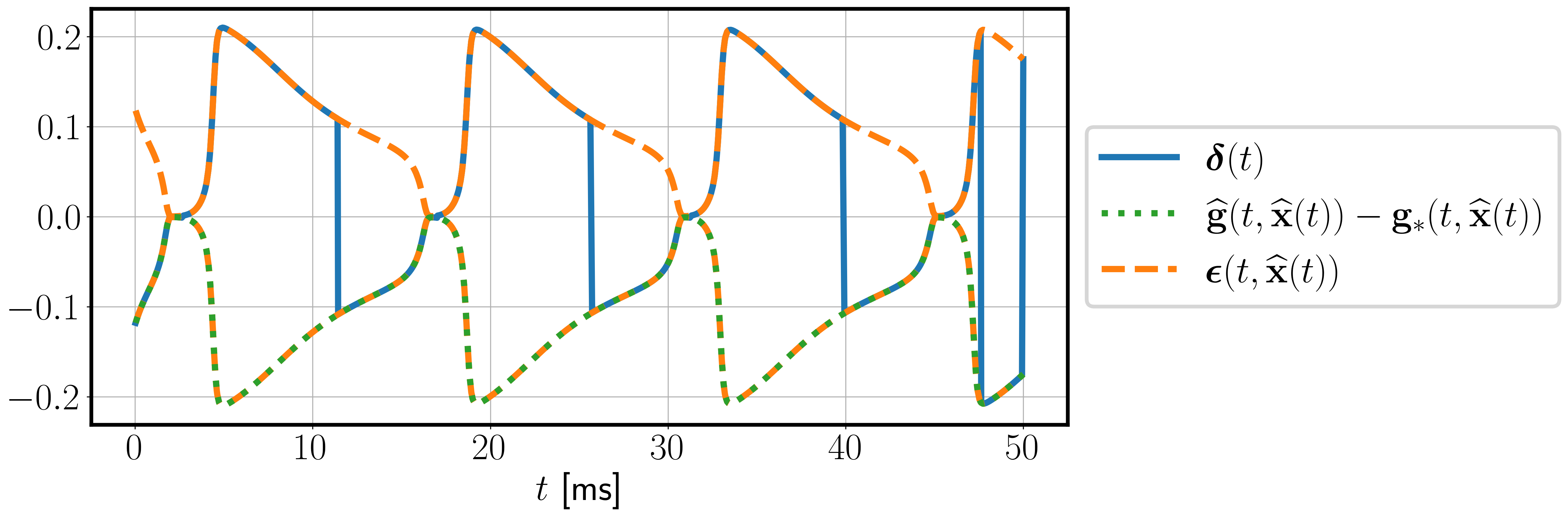}
\end{center}

\vspace*{-2ex}
\renewcommand{\thefigure}{SM.6}
\caption{Optimal solution of \eqref{eq:Refinement:ODE:LQOCP} for $\epsilon = 0.1$ exhibits several sign changes in $\bdelta$ near voltage spikes. Shown: component of $\bdelta$ corresponding to $\beta_m$ (blue), error of $\beta_m$ model along perturbed trajectory (green), and thresholds for box constraints \eqref{eq:ODE_sensitivity_sol:box-constraints} in $\beta_m$-component (orange).}
 \label{fig:delta-function}
\end{figure}

\begin{figure}[!htb]
\begin{center}
\includegraphics[width=0.49\textwidth]{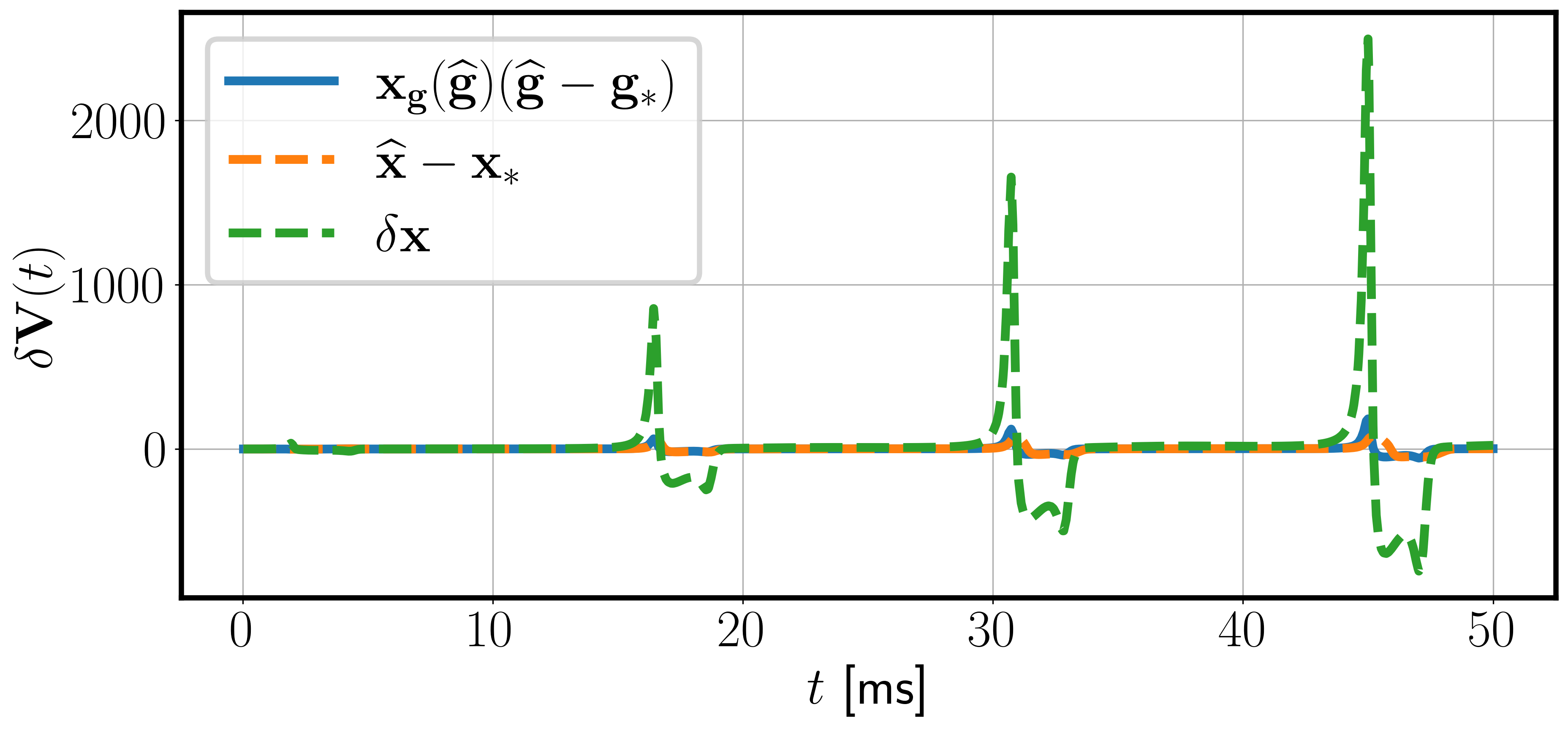} 
\end{center}

\vspace*{-2ex}
\renewcommand{\thefigure}{SM.7}
\caption{Optimal solution of \eqref{eq:Refinement:ODE:LQOCP} for $\epsilon = 0.1$ yields voltage spikes that get progressively larger, making the $L^2$ error bound (green) more conservative than the sensitivity-based estimate (blue) of the actual error (orange).}
 \label{fig:neuron-states}
\end{figure}

The left plot in \cref{fig:neuron-epsilon-study} shows the true solution error  $\| \widehat{\bx} - \bx_* \|_{L^2}$, the
sensitivity  $\| \bx_\bg(\widehat{\bg})( \widehat{\bg} - \bg_*) \|_{L^2}$, and our sensitivity-based error bound $\| \delta \bx \|_{L^2}$ computed from
\eqref{eq:Refinement:ODE:LQOCP} with $\BQ(t)$ as the identity matrix for a range of model perturbations $\epsilon$.
The true solution error and the sensitivity are in very good agreement for $\epsilon \le 0.5$, whereas our sensitivity-based error bound 
overestimates the true error by a constant factor.  Because our sensitivity-based error bound remains roughly proportional to the actual error, 
it is still useful in practice.

\begin{figure}[!htb]
\includegraphics[width=0.49\textwidth]{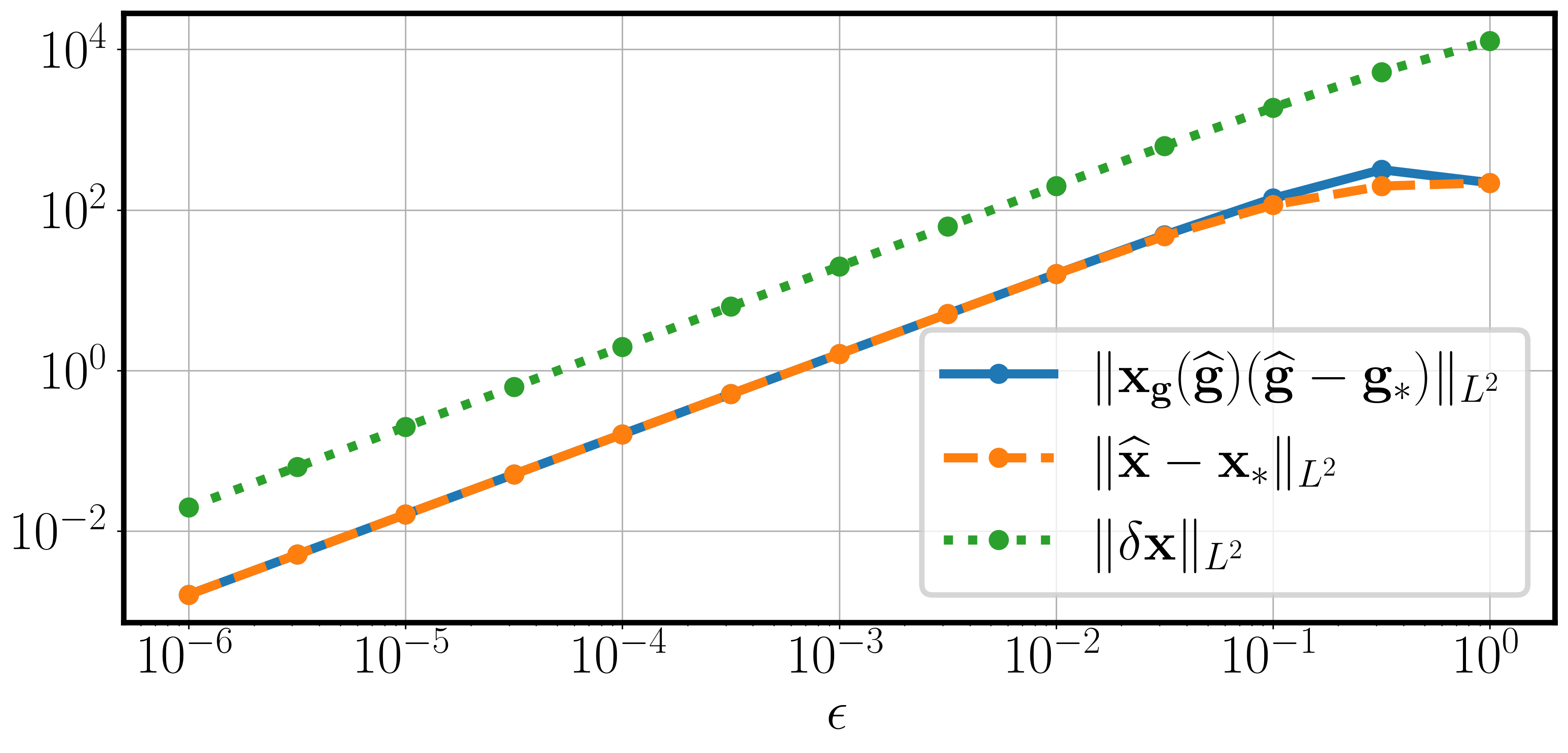} \hfill
\includegraphics[width=0.49\textwidth]{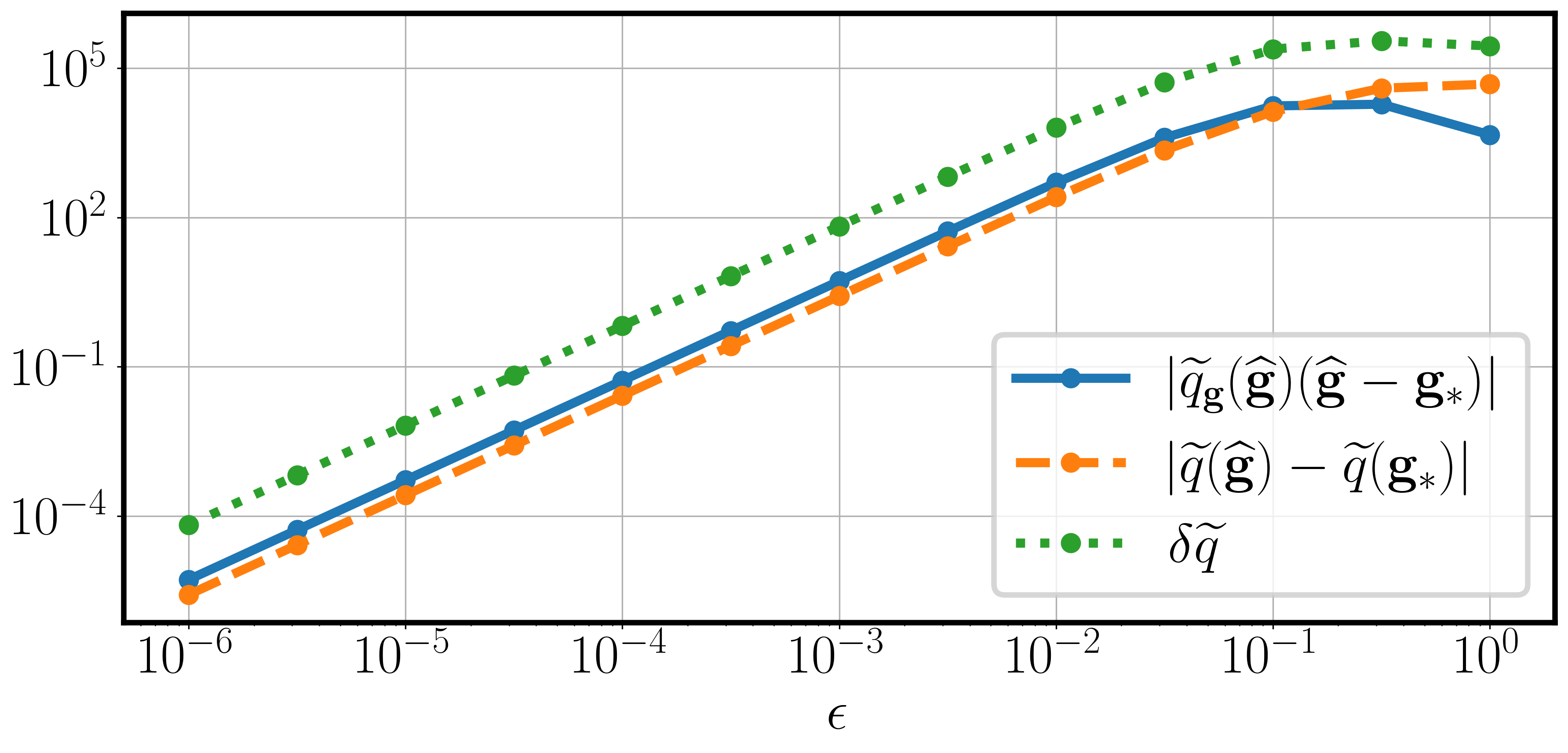}

\vspace*{-2ex}
\renewcommand{\thefigure}{SM.8}
\caption{Left: Strong agreement between the $L^2$ trajectory error estimates (blue), trajectory errors (orange), and sensitivity-based error bounds (green) up to a constant for Hodgkin-Huxley ODE.
 Right: Strong agreement between the QoI error estimates (blue), QoI errors (orange), and error bounds (green) up to a constant for Hodgkin-Huxley ODE. While somewhat conservative, the error bound is still roughly proportional to the error.}
 \label{fig:neuron-epsilon-study}
\end{figure}

Next, we study our sensitivity-based error bound  for the QoI
\begin{equation} \label{eq:HH_QoI}
	\widetilde{q}(\bg) = \int_{t_0}^{t_f} \big( \BV(t; \bg) - \BV_*(t) \big)^2 \, dt,
\end{equation}
where $\BV_*(t) := \BV(t; \bg_*)$ is the voltage component of the solution of the Hodgkin-Huxley model \eqref{eq:HH_model} with the
true component function. 
QoIs related to  \eqref{eq:HH_QoI} arise in objective functions for parameter estimation problems in the Hodgkin-Huxley model \eqref{eq:HH_model} and 
other neuronal models (see, e.g., \cite{ANogaret_CDMeliza_DMargoliash_HDIAbarbanel_2016a}, \cite{WVanGeit_EDeSchutter_PAchard_2008a}).  
For example, a synthetic experiment with true parameter $\bg_*$ and no measurement noise leads to an objective function
involving \eqref{eq:HH_QoI} and possibly a regularization term for $\bg$.
While  \eqref{eq:HH_QoI} is also one component of the error \eqref{eq:x+-err-size} which is 
estimated by \eqref{eq:Refinement:ODE:LQOCP}, the global solution of (the discretized version of) \eqref{eq:Refinement:ODE:LQOCP}
is difficult. On the other hand, our error bound \eqref{eq:Refinement:ODE:QoI-LP} for the QoI \eqref{eq:HH_QoI} can be easily computed analytically, although doing so requires explicit knowledge of the true solution.

The right plot in \cref{fig:neuron-epsilon-study} shows the true QoI error, the sensitivity, and our sensitivity-based error bound
$\delta \widetilde{q} := \int_{t_0}^{t_f}  \big|  \widehat{\BB}(t)^T \widehat{\blambda}(t) + \nabla_g l \big( t, \widehat{\bx}(t), \widehat{\bg} \big( t, \widehat{\bx}(t) \big) \big)  \big|^T  \bepsilon\big( t, \widehat{\bx}(t) \big) \, dt$ of \cref{thm:QoI-error-bound} for several values of the perturbation parameter $\epsilon$.
The true error (dashed orange line) and the sensitivity (solid line) differ by a factor of approximately two for $\epsilon \le 0.1$. 
Our sensitivity-based error bound (dotted green line) overestimates the sensitivity by a constant factor.
Again, this is due to the fact that our sensitivity-based error bound uses the model error bound $ \bepsilon\big(t, \widehat{\bx}(t)\big)$ instead of the true model error 
$\widehat{\bg}\big(t, \widehat{\bx}(t)\big) - \bg_*\big(t, \widehat{\bx}(t)\big)$.  However, because our sensitivity-based error bound remains roughly proportional to the actual error, 
it is still useful in practice.

The reason why the true QoI error and the sensitivity differ by a factor of approximately two for small $\epsilon$ is that
\[
	\widetilde{q}_\bg(\widehat{\bg}) (\widehat{\bg} - \bg_*) 
	= \int_{t_0}^{t_f} \frac{d}{dV} \big(V - \BV_*(t)\big)^2 \big|_{V = \BV(t; \widehat{\bg})} \delta \BV(t; \widehat{\bg}) \, dt 
	= \int_{t_0}^{t_f} 2 \big(\BV(t; \widehat{\bg}) - \BV_*(t)\big) \delta \BV(t; \widehat{\bg}) \, dt,
\]
where $\delta \BV(\cdot \, ; \widehat{\bg}) := \BV_\bg(\cdot \, ; \widehat{\bg}) (\widehat{\bg} - \bg_*) $ is the sensitivity of the voltage function.
If $\widehat{\bg}$ is close to $\bg_*$, then $\delta \BV(t; \widehat{\bg}) \approx \BV(t; \widehat{\bg}) - \BV_*(t)$, implying that
\[
	\widetilde{q}_\bg(\widehat{\bg}) (\widehat{\bg} - \bg_*) \approx 2 \int_{t_0}^{t_f} \big(\BV(t; \widehat{\bg}) - \BV_*(t)\big)^2 \, dt = 2 \, \widetilde{q}(\widehat{\bg}) = 2 \big(\widetilde{q}(\widehat{\bg}) - \widetilde{q}(\bg_*)\big),
\]
where the latter equality follows from the fact that $\widetilde{q}(\bg_*) = 0$. \ However, when these sensitivities are used, e.g.,
for surrogate model refinement, it is often sufficient in practice for the sensitivity to be roughly proportional to the actual error, which is the case here.



\end{document}